\newcommand{\Z}{\mathbb{Z}}
\newcommand{\lp}{\left(}
\newcommand{\rp}{\right)}
\renewcommand{\phi}{\varphi}
\newcommand{\eps}{\varepsilon}
\DeclareMathOperator{\inc}{Inc}
\newtheorem{theorem}{Theorem}[section]
\newtheorem{cor}[theorem]{Corollary}
\newtheorem{conjecture}[theorem]{Conjecture}
\newtheorem{prop}[theorem]{Proposition}
\newtheorem{lemma}[theorem]{Lemma}
\newtheorem{question}[theorem]{Question}
\theoremstyle{definition}
\newtheorem{definition}[theorem]{Definition}
\newtheorem{remark}[theorem]{Remark}
\renewcommand{\a}{\ora{a}}
\definecolor{lavender}{RGB}{176,102,212} 
\definecolor{john}{RGB}{255,137,54}
\newcommand{\cX}{\mathcal{X}}
\newcommand{\Tad}{\mathrm{Tad}} 
\newcommand{\supp}{\mathrm{supp}}
\begin{document}

\title[The $e$-positivity of twinned paths and cycles]{The $e$-positivity of the chromatic symmetric function for twinned paths and cycles}
\author[Banaian et al.]{Esther Banaian}\address{Department of Mathematics, Aarhus University, Aarhus DK}
\email{\url{banaian@math.au.dk}}
\author[]{Kyle Celano}\address{Department of Mathematics, Wake Forest University, NC}
\email{\url{celanok@wfu.edu}}
\author[]{Megan Chang-Lee}\address{Department of Mathematics, Brown University, Providence, RI}
\email{\url{megan_chang-lee@brown.edu}}

\author[]{Laura Colmenarejo}\address{Department of Mathematics, North Carolina State University, Raleigh, NC}\email{\url{lcolmen@ncsu.edu}}
\author[]{Owen Goff}\address{Department of Mathematics, University of Wisconsin, Madison, WI}\email{\url{ogoff@wisc.edu}}
\author[]{Jamie Kimble}\address{Department of Mathematics, Michigan State University, East Lansing, MI}\email{kimblej2@msu.edu}
\author[]{Lauren Kimpel}\address{Department of Mathematics, Johns Hopkins University, Baltimore, MD}\email{lkimpel1@jhu.edu}
\author[]{John Lentfer}\address{Department of Mathematics, University of California, Berkeley, CA}\email{jlentfer@berkeley.edu}
\author[]{Jinting Liang}\address{Department of Mathematics, Michigan State University, East Lansing, MI}\email{liangj26@msu.edu}
\author[]{Sheila Sundaram}
\address{School of Mathematics and Statistics, University of Minnesota, Minneapolis, MN}
\email{shsund@umn.edu}

\keywords{chromatic symmetric function, graph twinning, elementary symmetric function, $e$-positivity}

\subjclass[2020]{05E05, 05C15}

\begin{abstract} The operation of twinning a graph at a vertex was introduced by Foley, Ho\`ang, and Merkel (2019), who conjectured that twinning preserves $e$-positivity of the chromatic symmetric function. A counterexample to this conjecture was given by Li, Li, Wang, and Yang (2021).  In this paper, we prove that $e$-positivity is preserved by the twinning operation on cycles, by giving an $e$-positive generating function for the chromatic symmetric function, as well as an $e$-positive recurrence.  We derive similar $e$-positive generating functions and recurrences for twins of paths.  Our methods make use of the important triple deletion formulas of Orellana and Scott (2014), as well as new symmetric function identities. 
\end{abstract}

\maketitle

\tableofcontents

\section{Introduction}
The \textit{chromatic symmetric function} of a graph $G=(V,E)$ is defined by Stanley~\cite{stanley_1995} to be
\[X_{G}  (\mathbf{x})=\sum_{\kappa}\prod_{v\in V}x_{\kappa(v)},\]
where the sum is over all proper colorings $\kappa:V\to \Z_{>0}$ of $G$ by positive integers. The goal of this paper is to study the effect that a small change to the graph $G$ has on  $X_G(\mathbf{x})$. Specifically, we look at the change in $X_G(\mathbf{x})$ when one \textit{twins} (or \textit{clones}) a vertex $v$ of a graph $G$, that is, when one adds a vertex $v'$ incident to $v$ and all its neighbors, to produce a new graph $G_v$. Precise definitions of this operation and related terms appear in~\Cref{sec:prelim}. 

\begin{question}\label{question: twinning}
    For a given vertex $v$ of a graph $G$, how are the polynomials $X_{G_v}(\mathbf{x})$ and $X_G(\mathbf{x})$ related? 
\end{question}
In the seminal paper~\cite{stanley_1995}, 
Stanley proved that the chromatic symmetric functions of paths and cycles are $e$-positive, that is, their expansion in the basis of elementary symmetric functions has nonnegative coefficients.  As observed in~\cite{stanley_1995}, the result for paths is originally due to Carlitz, Scoville, and Vaughan in a different context~\cite[p.242]{CarScoVau1976}.
More generally, spurred by the following conjecture of Stanley and Stembridge,  much of the research on the chromatic symmetric function has centered around the incomparability graph $\inc(P)$ of a $(3+1)$-free poset $P$, defined as a poset containing no induced subposet isomorphic to the disjoint union of a $3$-chain and a $1$-chain.

\begin{conjecture}[\cite{stanley_1995,stanstem}]
    If $P$ is a $(3+1)$-free poset, then $X_{\inc(P)}(\mathbf{x})$ is $e$-positive.
\end{conjecture}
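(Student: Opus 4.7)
The plan is to attack the conjecture through the now-standard reduction to Hessenberg varieties, since purely combinatorial attempts to extract an $e$-basis expansion directly from a $(3+1)$-free poset have so far failed. The route has three main stages: reduce to natural unit interval orders, refine to the chromatic quasi-symmetric function, and then interpret $e$-positivity as a representation-theoretic statement about the Tymoczko dot action on the cohomology of a regular semisimple Hessenberg variety.

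First I would apply the Guay-Paquet reduction, which writes $X_{\inc(P)}(\mathbf{x})$ for an arbitrary $(3+1)$-free poset $P$ as a nonnegative rational combination of $X_{\inc(Q)}(\mathbf{x})$ with $Q$ both $(3+1)$- and $(2+2)$-free, i.e., a natural unit interval order. Since $e$-positivity is preserved under nonnegative combinations, it suffices to prove the conjecture for natural unit interval orders. Next I would pass to the Shareshian--Wachs chromatic quasi-symmetric refinement $X_{\inc(Q)}(\mathbf{x}; q)$, which is genuinely symmetric in this restricted setting and specializes to $X_{\inc(Q)}(\mathbf{x})$ at $q=1$. Proving $e$-positivity of the $q$-refinement would be enough, and it connects directly to geometry: by the theorem of Brosnan--Chow (with an alternative proof by Guay-Paquet), $\omega X_{\inc(Q)}(\mathbf{x}; q)$ equals the graded Frobenius characteristic of Tymoczko's dot action of $S_n$ on the cohomology ring $H^{*}(\mathrm{Hess}(S,h))$, where $h$ is the Hessenberg function corresponding to $Q$.

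The problem then becomes: show that each graded piece of $H^{*}(\mathrm{Hess}(S,h))$ decomposes, as an $S_n$-module under the dot action, into summands each induced from the trivial representation of a Young subgroup (a tabloid permutation representation); applying $\omega$ to its graded Frobenius characteristic then yields the desired $e$-positivity term by term. I would attempt to realize this by constructing an $S_n$-stable filtration whose associated graded pieces are tabloid modules indexed by partitions read off combinatorially from $h$. The natural building blocks are the Bia\l{}ynicki-Birula cells of the affine paving of $\mathrm{Hess}(S,h)$ studied by Tymoczko, together with combinatorial parametrizations such as $P$-tableaux, acyclic orientations compatible with $h$, or chains in the Bruhat order cut out by the Hessenberg condition.

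The main obstacle, by far, is this last step: producing such a filtration uniformly in $h$. All prior partial results, including the paths and cycles whose twins are treated in the present paper via the deletion formulas of Orellana--Scott, correspond to extremely special Hessenberg functions in which the combinatorics of the paving can be controlled by hand or by an explicit recurrence. A uniform argument will almost certainly require either a genuinely new symmetric function identity tailored to Hessenberg characters, or a novel geometric construction of an $S_n$-equivariant basis of $H^{*}(\mathrm{Hess}(S,h))$ that visibly exhibits the tabloid structure; this is the point at which the proposed program either succeeds or must be replaced by a different approach.
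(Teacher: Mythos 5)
The statement you are trying to prove is the Stanley--Stembridge conjecture, which the paper records precisely as a \emph{conjecture} with citations to its origin; the paper contains no proof of it and only establishes special cases (twins of paths and cycles) by entirely elementary means. Your submission is likewise not a proof: it is a survey of the standard reduction pipeline, and you say so yourself in the final sentence. The reductions you describe are correct as far as they go --- Guay-Paquet's theorem does reduce the conjecture to natural unit interval orders, the Shareshian--Wachs refinement $X_{\inc(Q)}(\mathbf{x};q)$ is symmetric there and specializes correctly at $q=1$, and the Brosnan--Chow / Guay-Paquet theorem does identify $\omega X_{\inc(Q)}(\mathbf{x};q)$ with the graded Frobenius characteristic of the Tymoczko dot action on $H^{*}(\mathrm{Hess}(S,h))$. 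But these are reformulations, not progress toward positivity: the assertion that each graded piece of the dot action decomposes into modules induced from trivial representations of Young subgroups is \emph{exactly equivalent} to $e$-positivity of the $q$-refinement, so your ``main obstacle'' is the entire content of the conjecture. No candidate filtration is constructed, no indexing of the putative tabloid pieces is proposed, and the Bia\l{}ynicki-Birula cells you invoke are not $S_n$-stable under the dot action, which is precisely why no one has been able to read off a permutation-module decomposition from the affine paving.

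Concretely, then, the gap is the absence of any argument at the only step that matters. A correct review of the literature followed by ``and here one must do something new'' cannot be accepted as a proof, and it also bears no relation to the methods of this paper, which deliberately avoid the geometric machinery and work instead with the Orellana--Scott triple deletion formulas and explicit $e$-positive generating functions for very specific graph families. If you want to contribute within the scope of this paper, the realistic target is not the full conjecture but further families of $(3+1)$-free posets (equivalently, further Hessenberg functions $h$) for which an explicit $e$-positive recurrence or generating function can be exhibited, in the spirit of \Cref{thm:twin-cycle-genfunction-rational-expression} and \Cref{thm:epoS-twin-pathS-recurrence}.
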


To \textit{twin} a poset $P$ at a vertex $v$, producing $P_v$, is to add an element $v'$ such that $v'$ is incomparable to $w$ if and only if either $w=v$ or $w$ is incomparable to $v$.
Note that if $G=\inc(P)$, then $\inc(P_v)=G_v$. This next simple lemma is the main motivation for considering the twinning operation. Its proof is immediate from the fact that if $u<v$, then  $u<v'$, and if $v<w$ then $v'<w$.
\begin{lemma}
    The twin of a $(3+1)$-free poset is $(3+1)$-free.
\end{lemma}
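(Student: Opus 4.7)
The plan is to proceed by contradiction. Suppose $P_v$ contains an induced subposet isomorphic to $3+1$, consisting of a chain $a < b < c$ together with an isolated element $d$ (incomparable to each of $a$, $b$, $c$). If $v' \notin \{a,b,c,d\}$, then these same four elements lie in $P$ with the same comparabilities, contradicting the assumption that $P$ is $(3+1)$-free. So I may assume $v'$ is among $\{a,b,c,d\}$.

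The key observation is the substitution principle coming from the hinted fact: for any $w \neq v, v'$, one has $w < v \iff w < v'$ and $v < w \iff v' < w$. Hence the comparabilities of $v$ with all elements other than $v'$ match those of $v'$. So I would attempt to replace $v'$ by $v$ in the induced $3+1$ and read off an induced $3+1$ in $P$, which would supply the desired contradiction. The substitution will succeed unless $v$ itself already occupies one of the other three positions in the subposet, since in that case $\{a,b,c,d\}$ with $v'$ replaced by $v$ collapses to only three distinct elements of $P$.

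My plan is therefore to split into cases according to the position of $v'$, namely $v' \in \{a, b, c\}$ (in the chain) versus $v' = d$ (the isolated vertex), and for each case to enumerate the sub-cases where $v$ coincides with one of the three remaining positions. The harmless cases (where $v \notin \{a,b,c,d\}$) directly yield an induced $3+1$ in $P$ via the substitution principle, contradicting $(3+1)$-freeness of $P$.

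The main obstacle is ruling out the overlap sub-cases, and here the argument is short but must be done carefully. In every such sub-case, the fact that $v$ and $v'$ are incomparable (as twins) is inconsistent with the $3+1$ relations. For instance, if $v' = d$ and $v = b$, then $v'$ incomparable to $a$ in $P_v$ forces $v$ incomparable to $a$ by the substitution principle, contradicting $a < b = v$; if $v' = a$ and $v = c$, then $v' < c$ becomes $v' < v$, contradicting that twins are incomparable; and analogous contradictions eliminate every remaining overlap. Once all overlap sub-cases are excluded, the substitution principle applies and the proof is complete.
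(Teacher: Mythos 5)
Your proof is correct and follows essentially the same route as the paper, which simply declares the lemma immediate from the fact that $u<v$ implies $u<v'$ and $v<w$ implies $v'<w$; you have merely spelled out the case analysis (substituting $v$ for $v'$ and ruling out the overlap sub-cases) that the paper leaves implicit.
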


One can therefore make a weakened version of the Stanley--Stembridge conjecture, first appearing in the work of Foley, Ho{\`a}ng, and Merkel~\cite{foley2018classes}.

\begin{conjecture}[\cite{foley2018classes}]\label{conjecture: 3+1 twinning}
    If $P$ is $(3+1)$-free and $X_{\inc(P)}(\mathbf{x})$ is $e$-positive, then $X_{\inc(P_v)}(\mathbf{x})$ is $e$-positive for any $v\in P$.
\end{conjecture}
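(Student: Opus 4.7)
The plan is to seek an explicit identity expressing $X_{\inc(P_v)}(\mathbf{x})$ as a nonnegative combination of chromatic symmetric functions each known to be $e$-positive, leveraging the hypothesis that $X_{\inc(P)}(\mathbf{x})$ is $e$-positive. Setting $G = \inc(P)$ and $G_v = \inc(P_v)$, the new vertex $v'$ has the same closed neighborhood as $v$ in $G$ and is adjacent to $v$, so $\{v,v'\}$ forms a pair of \emph{true twins}. The first step is to record this local structure and note that for every neighbor $u$ of $v$ in $G$, the triple $\{v,v',u\}$ spans a triangle in $G_v$, which makes the graph amenable to triple-deletion machinery.

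Next, I would apply the triple deletion formulas of Orellana and Scott to the triangles of the form $\{v,v',u\}$ in $G_v$. These identities produce linear relations expressing $X_{G_v}$ in terms of chromatic symmetric functions of graphs obtained by removing edges from $G_v$; the aim is to re-express $X_{G_v}$ in terms of $X_G$ together with chromatic symmetric functions of smaller (or simpler) graphs arising from the local surgery around $v$. To close the argument one would couple these recurrences with further symmetric function identities that convert the resulting expression into a manifestly $e$-positive combination, and then induct on $|P|$ (or on the number of edges of $G$), invoking the inductive hypothesis for the smaller $(3+1)$-free posets that appear, together with the preservation lemma preceding the conjecture to guarantee that the relevant sub-posets remain $(3+1)$-free.

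The main obstacle lies precisely in the second step: the triple deletion identities produce signed combinations of chromatic symmetric functions, and even when each summand is individually $e$-positive, the signed sum need not be. To overcome this one would require additional structural input from the $(3+1)$-free hypothesis that controls the sign pattern and forces $e$-positivity of the combination. In fact this obstruction turns out to be fatal in general: as noted in the introduction, Li, Li, Wang, and Yang exhibited a $(3+1)$-free poset $P$ for which $X_{\inc(P)}(\mathbf{x})$ is $e$-positive but $X_{\inc(P_v)}(\mathbf{x})$ is not, so no uniform identity of the kind envisioned above can exist. The natural path forward, and the one pursued in this paper, is therefore to specialize to rigid structured families such as paths and cycles, where the triangle structure around $v$ is sufficiently constrained for explicit $e$-positive generating functions and recurrences to be derived.
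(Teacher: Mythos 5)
The statement you are asked about is a \emph{conjecture} (due to Foley, Ho\`ang, and Merkel), and the paper offers no proof of it: it remains open, and the entire paper is devoted to establishing special cases (twins of paths and cycles). So there is no proof in the paper against which your attempt can be matched; any complete ``proof'' would be a major new result. Your sketch correctly identifies the tool the authors actually use for their special cases --- the Orellana--Scott triple deletion formulas applied to the triangles created by twinning --- and correctly locates the obstruction: triple deletion yields \emph{signed} combinations of chromatic symmetric functions, and nobody knows how to use the $(3+1)$-free hypothesis to control those signs in general. Up to that point your assessment of the difficulty is accurate, but what you have written is a research plan with its central step missing, not a proof.

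The genuine error is in your final paragraph. You assert that Li, Li, Wang, and Yang exhibited a $(3+1)$-free poset $P$ with $X_{\inc(P)}$ $e$-positive but $X_{\inc(P_v)}$ not $e$-positive, and conclude that the conjecture is false. This is a misreading of the paper's remark following the conjecture: the counterexample of Li--Li--Wang--Yang is a graph $G$ that is \emph{not} the incomparability graph of any $(3+1)$-free poset. It refutes only the stronger claim that twinning preserves $e$-positivity for arbitrary graphs; it does not touch~\Cref{conjecture: 3+1 twinning}, which is restricted to $(3+1)$-free posets and which the authors cite precisely as evidence that ``there is something special about the twinning operation on $(3+1)$-free posets.'' So your claimed ``fatal obstruction'' does not exist in the form you state it, and the correct conclusion is not that the conjecture is false but that it is open, with the paper's results on twinned paths and cycles serving as supporting evidence.
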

\begin{remark}
    Li, Li, Wang, and Yang~\cite[Theorem 3.6]{Li-Li-Wang-Yang} prove that the twinning operation on graphs does not always preserve $e$-positivity.  They give an example of a graph $G$~\cite[Theorem 4.1]{Li-Li-Wang-Yang} that is not an incomparability graph of a $(3+1)$-free poset, but whose chromatic symmetric function $X_G(\mathbf{x})$ is $e$-positive, and show that for a certain 
    vertex $v$ of $G$, the chromatic symmetric function for the twinned graph $G_v$ 
     does not expand positively even in the Schur basis, and so it cannot be $e$-positive. 
     This suggests that there is something special about the twinning operation on $(3+1)$-free posets.
\end{remark}
If~\Cref{conjecture: 3+1 twinning} is solved, then the Stanley--Stembridge conjecture can be reduced to the following conjecture. We say that a graph $H$ is \textit{twin-free} if there is no graph $G$ with vertex $v$  such that $H=G_v$.
\begin{conjecture}
    If $P$ is $(3+1)$-free and twin-free, then $X_{\inc(P)}(\mathbf{x})$ is $e$-positive.
\end{conjecture}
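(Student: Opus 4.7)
The plan is to exploit the structural theory of $(3+1)$-free posets to narrow the conjecture to a controllable family. A first step is to characterize $(3+1)$-free twin-free posets explicitly: since twin-freeness forces distinct vertices to have distinct (open or closed) neighborhoods in $\inc(P)$, such posets should be built from chains and antichains in a fairly rigid manner. One would hope that the resulting class is small enough---for example, a union of well-behaved families such as generalized paths, generalized cycles, and perhaps a controlled list of exceptional cases---so that the techniques developed in this paper can be applied family by family.

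The second step would be an induction on $|P|$, combined with the triple deletion formulas of Orellana and Scott. Given a twin-free $(3+1)$-free poset $P$, one would locate a triple-edge configuration in $\inc(P)$ and write $X_{\inc(P)}(\mathbf{x})$ as an $\N$-linear combination of chromatic symmetric functions of smaller graphs. Each smaller graph should, after collapsing any twins that have been created, again be the incomparability graph of a $(3+1)$-free poset, at which point one applies~\Cref{conjecture: 3+1 twinning} to pass between the twinned and twin-free versions, and then invokes the inductive hypothesis to recover $e$-positivity. The $e$-positivity results of this paper for twins of paths and cycles would serve as the base cases of such an induction, with the generating-function and recurrence formulas providing explicit control over how $e$-coefficients propagate under twinning.

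The main obstacle is that the conjecture is essentially as deep as the full Stanley--Stembridge conjecture: it is designed precisely to isolate the hardest instances that remain after the twinning reduction has been carried out. Beyond paths and cycles, no general family of twin-free $(3+1)$-free incomparability graphs is currently known to admit an $e$-positive combinatorial formula, and Gasharov's Schur-positivity theorem is not strong enough to close the gap. Worse, the triple-deletion procedure frequently introduces new twin pairs into the resulting smaller graphs, so the inductive step described above does not automatically stay inside the twin-free class. A genuine advance will therefore require either an extension of the triple-deletion framework that respects twin-freeness, or an entirely new $e$-positivity technique tailored to incomparability graphs with no duplicated vertices---ideally one that leverages the rigidity forced by the combined $(3+1)$-free and twin-free hypotheses.
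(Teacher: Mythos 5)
The statement you are addressing is stated in the paper as a \emph{conjecture}, not a theorem: the paper offers no proof of it, and indeed frames it as the residual open problem to which the Stanley--Stembridge conjecture would reduce if \Cref{conjecture: 3+1 twinning} were established. So there is no paper argument to compare against, and your proposal should be judged on its own terms as a purported proof. On those terms it is not a proof but a research program, and you are commendably candid about why it does not close. Concretely, it rests on three unestablished pillars: (i) a structural classification of twin-free $(3+1)$-free posets into ``well-behaved families,'' which is asserted as a hope rather than derived (twin-freeness of $\inc(P)$ constrains neighborhoods, but no argument is given that the resulting class reduces to paths, cycles, and finitely many exceptions --- and it does not: unit interval graphs alone already contain many twin-free examples far beyond these families); (ii) an appeal to \Cref{conjecture: 3+1 twinning}, which is itself open, so even the passage between twinned and twin-free versions in your inductive step is conditional on another conjecture; and (iii) an induction via triple deletion whose recursive step, as you yourself note, leaves the twin-free class and can produce graphs that are not incomparability graphs of $(3+1)$-free posets at all, so the inductive hypothesis cannot be invoked.

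The one substantive point worth preserving from your write-up is the correct assessment of difficulty: since every $(3+1)$-free poset is obtained from a twin-free one by iterated twinning, this conjecture together with \Cref{conjecture: 3+1 twinning} is equivalent in strength to Stanley--Stembridge, so no argument at the level of the techniques in this paper (triple deletion plus generating-function manipulations for specific one-parameter families) can be expected to settle it. The paper's actual contribution is orthogonal to your plan: it establishes $e$-positivity for specific twinned families (paths and cycles), i.e., it chips away at \Cref{conjecture: 3+1 twinning} in concrete cases, rather than attacking the twin-free base cases that your proposal targets.
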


Other than~\cite{foley2018classes,Li-Li-Wang-Yang}, twinning does not seem to have been studied as an operation in its own right.
Rather, there has been incidental progress over the years. In 2001,  Gebhard and Sagan~\cite{gebhard_sagan_2001} introduced the stronger notion of 
$(e)$-positivity of chromatic symmetric functions in noncommuting variables. This implies  $e$-positivity for chains of complete graphs~\cite[Corollary 7.7]{gebhard_sagan_2001}, and includes twins of paths as a special case.  Later, Dahlberg and van Willigenburg~\cite{DahlbergVanWilligenburg2018} gave a direct proof of $e$-positivity for \textit{lollipop} graphs, which are a special case of~\cite[Corollary 7.7] {gebhard_sagan_2001}, and which again include paths twinned at a leaf.

Throughout the paper, we refer 
   to a property of the \emph{chromatic symmetric function of the graph $G$} as being a property of  \emph{the graph $G$}. For instance, we use interchangeably the phrases ``the generating function of the chromatic symmetric function of a graph'' and ``the generating function of a graph''. Similarly, we use ``the chromatic symmetric function of the graph $G$ is $e$-positive'' and ``the graph $G$ is $e$-positive'' interchangeably. 

\subsection{Main results}
This paper studies the effect of twinning on the $e$-expansions of the chromatic symmetric function of certain graphs. We specifically look at twins of path and cycle graphs, a few of which are shown in~\Cref{fig: twin examples}. A summary of our progress on~\Cref{question: twinning} follows.

Our first main contribution is a series of explicit $e$-positive formulas for the generating function 
of  the following families  
of twinned graphs:
\begin{enumerate}
\item The path twinned at one leaf (\Cref{prop:epos-via-gf-twin-pathS-leaf})
    \item The path twinned at both leaves (\Cref{thm:gf-double-twinS-deg-ge7-len3})
    \item The path twinned at an interior vertex (\Cref{thm:cX_P-f_ell-formula})
    \item The cycle twinned at a vertex (\Cref{thm:twin-cycle-genfunction-rational-expression})
\end{enumerate}
The fourth family, examined in detail in~\Cref{sec:gf-twin-cycle}, and culminating in~\Cref{thm:twin-cycle-genfunction-rational-expression}, was not known to be $e$-positive until now.
The first three families appear in~\cite{gebhard_sagan_2001} 
and were shown to have the stronger property of  $(e)$-positivity of their chromatic symmetric functions with noncommuting variables~\cite[Theorems 7.6 and 7.8]{gebhard_sagan_2001}. 
The  $e$-positivity of the first graph was later re-established directly in~\cite{DahlbergVanWilligenburg2018}.

However, the explicitly $e$-positive  expressions for the generating functions that we give in~\Cref{prop:epos-via-gf-twin-pathS-leaf},~\Cref{thm:gf-double-twinS-deg-ge7-len3} and~\Cref{thm:cX_P-f_ell-formula}  are new. Also, in~\Cref{cor:new-gf-PATHSS} we provide a new $e$-positive expression for the generating function of the path that isolates the terms containing $e_1$. 
Our derivations make crucial use of the triple deletion formula of Orellana and Scott~\cite{ORELLANA20141}.  

For all but the third family, the expression we obtain for the generating function has the form
\[\sum_{n\geq 0}X_{G_n}z^n=\frac{f_G}{1-\sum_{i\geq 2}(i-1)e_iz^i}+h_G\]
where $f_G$ and $h_G$ are some $e$-positive functions depending on the family and $h_G$ has finite degree. The third family has $h_G$ with infinite degree. Note that the denominator in the rational expression above coincides with the denominator in the generating function for both the path and the cycle (see~\Cref{thm:RPS-gf-paths-cycles}). This allows us to easily obtain explicit formulas for the coefficients of the elementary symmetric functions (see~\Cref{cor:e-coefficents-of-twinned-path},~\Cref{cor:double-twin-path-leaveS-coeffS},~\Cref{cor:e-coefficients-twinned-cycle}). We state our formulas for the coefficients using a new statistic $\eps(\lambda)$ associated with a partition $\lambda$  (see~\Cref{subsec: epsilon-lambda}). This statistic appears naturally when computing the $e$-coefficients of the path and cycle, and appears to be of independent interest. 

The identities presented in~\Cref{sec:Sym-fn-identitieS}, particularly in~\Cref{lem:Ez-identitieS} and its proof,  are the starting point for our $e$-positivity results. They also seem interesting in their own right.

Our second main contribution is an $e$-positive recurrence relation for each of the families listed above, as well as a graph appearing in the computation for the twinned cycle that we call the \textit{moose graph}, which has been shown to be $e$-positive as a special case of hat graphs~\cite[Theorem 3.9]{WangZhou2024}.
\begin{enumerate}
    \item The path twinned at one leaf (\Cref{SHORTprf-epoS-twin-path-leaf})
    \item The path twinned at both leaves (\Cref{SHORTprf-epoS-twin-path-both-leaves})
    \item The path twinned at an interior vertex (\Cref{thm:epoS-twin-pathS-recurrence})
    \item The cycle twinned at a vertex (\Cref{thm:SHORTprf-epoS-twin-cycleS-recurrence})
    \item The moose graph (\Cref{thm:MooseRecurrence})
\end{enumerate}

\section{Preliminaries}\label{sec:prelim}

In this section, we define the basic notions used throughout the paper, as well as discuss previous results. We assume a familiarity with symmetric functions as in~\cite[Chapter 7]{stanley_1999} or~\cite{macdonald1998symmetric}.

A \textit{graph} $G$ is a pair of sets $(V,E)$ where $V$ is the set of \textit{vertices} and $E$ is a set of $2$-element subsets of vertices, called \textit{edges}. We denote edges by $\{u,v\}$ or simply by $uv$. We assume that $V$ and $E$ are both finite, 
and that the graph is simple (i.e., there are no loops and no multiple edges). A \textit{leaf} of a graph is a vertex contained within exactly one edge. An \textit{internal vertex} is a vertex contained within at least two edges. Two  graphs that are important for this paper are the \textit{path} $P_n$, which has vertex set $V=[n]=\{1,\dots,n\}$ and edge set $E=\{\{i,i+1\}\mid i\in [n-1]\}$, and the cycle $C_n$, which also has vertex set $V=[n]$ and edge set $E=\{\{i,i+1\}\mid i\in [n-1]\}\cup \{1,n\}$. We illustrate them in~\Cref{fig:path-and-cycle-intro}. 

\begin{figure}[ht]
    \centering
    \begin{subfigure}[b]{.4\textwidth}
        \centering
\begin{tikzpicture}[scale = 0.7]
\node(1) at (0,0){$\bullet$};
\node(l-1) at (1.5,0){$\bullet$};
\node(l) at (3,0){$\bullet$};
\node(l+1) at (4.5,0){$\bullet$};
\node(dots2) at (5.75,0){$\cdots$};
\node(n) at (7,0){$\bullet$};
\draw(0,0) -- (1.5,0);
\draw(1.5,0) -- (3,0);
\draw(3,0) -- (4.5,0);
\draw(4.5,0) -- (dots2);
\draw(dots2) -- (7,0);
\end{tikzpicture}
        \caption{Path $P_n$}
    \end{subfigure}
    \begin{subfigure}[b]{.4\textwidth}
    \centering
  \begin{tikzpicture}[scale = 0.7]
\draw(0,2) -- (1.4,1.4);
\draw(1.4,1.4) -- (2,0);
\draw(2,0) -- (1.4,-1.4);
\draw(1.4,-1.4) -- (0.5,-2);
\draw(-0.6,-2) --(-1.4,-1.4);
\draw(-1.4,-1.4) -- (-2,0);
\draw(-2,0) -- (-1.4,1.4);
\draw(-1.4,1.4) -- (0,2);
\node(1) at (0,2){$\bullet$};
\node(2) at (1.4,1.4){$\bullet$};
\node(3) at (2,0){$\bullet$};
\node(4) at (1.4,-1.4){$\bullet$};
\node(5) at (0,-2.1){$\cdots$};
\node(8) at (-1.4,1.4){$\bullet$};
\node(7) at (-2,0){$\bullet$};
\node(6) at (-1.4,-1.4){$\bullet$};
\end{tikzpicture}  
\caption{Cycle $C_n$}
\end{subfigure}
    \caption{The path and cycle graphs.}
    \label{fig:path-and-cycle-intro}
\end{figure}
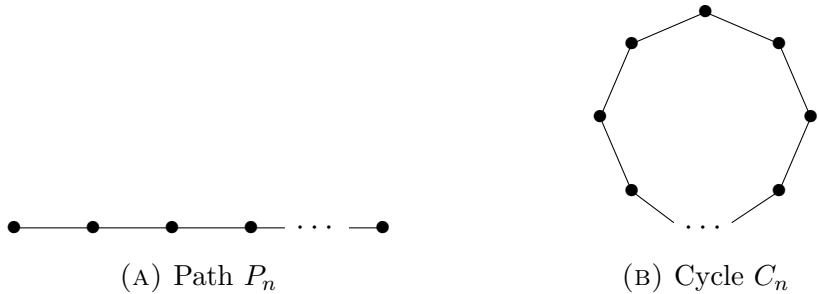

A \textit{proper coloring} of a graph $G=(V,E)$ is a function $\kappa:V\to \Z_{>0}$ such that if $uv\in E$, then $\kappa(u)\neq \kappa(v)$. Let $\mathbf{x}=(x_1,x_2,\dots)$ be an infinite set of commuting variables. The \textit{chromatic symmetric function} of a graph $G=(V,E) $ is defined to be
\[X_G:=X_G(\mathbf{x})=\sum_{\kappa} \prod_{v\in V} x_{\kappa(v)},\]
where the sum is over all proper colorings $\kappa$ of $G$.
This symmetric function was first introduced by Stanley in~\cite{stanley_1995} and has been studied by numerous authors since then. One central goal has been to characterize  graphs $G$ for which $X_G$ is $e$-positive, i.e., $X_G$ has nonnegative coefficients in the elementary symmetric function basis. We give an overview of the previous $e$-positivity results in~\Cref{subsection: e-positivity background}.

\subsection{Graph operations}
We begin by defining the two operations on graphs that appear in 
this paper.

Given an edge $\epsilon$ of a graph $G$, the \textit{deletion of $\epsilon$ in $G$} is the graph, denoted by $G-\epsilon$, obtained by removing the edge $\epsilon$ from $G$. The following formulas of Orellana and Scott are used extensively in our arguments and we refer to them as the \textit{triple deletion arguments}. 

\begin{prop}[Triple Deletion Formula {\cite[Theorem 3.1]{ORELLANA20141}}]\label{prop:Triple-deletion}
    Let $G$ be any graph. Suppose edges $\epsilon_1,\epsilon_2,\epsilon_3$ form a triangle in $G$. Then,
    \[X_G =X_{G-\epsilon_1} +X_{G-\epsilon_2} -X_{G-\{\epsilon_1,\epsilon_2\}} .\]
\end{prop}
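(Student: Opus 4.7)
The plan is a two-step enumerative argument that exploits the fact that the third edge $\epsilon_3$ of the triangle---the one that does not appear in the formula---forces a compatibility condition between the behavior of $\epsilon_1$ and $\epsilon_2$ under any coloring. I would label the triangle so that $\epsilon_1 = \{a,b\}$, $\epsilon_2 = \{b,c\}$, and $\epsilon_3 = \{a,c\}$. The key observation is that if a map $\kappa : V \to \Z_{>0}$ properly colors $\epsilon_3$ (so $\kappa(a) \neq \kappa(c)$), then at most one of $\epsilon_1, \epsilon_2$ can be monochromatic, since $\kappa(a) = \kappa(b)$ together with $\kappa(b) = \kappa(c)$ would force $\kappa(a) = \kappa(c)$.

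First I would analyze the largest term, $X_{G - \{\epsilon_1,\epsilon_2\}}$. It enumerates colorings that are proper on every edge of $G$ except possibly $\epsilon_1$ and $\epsilon_2$; in particular $\epsilon_3$ is proper, so by the key observation each such coloring falls into exactly one of three disjoint classes according to which (if any) of $\epsilon_1,\epsilon_2$ is monochromatic. Let $Z_0, Z_1, Z_2$ denote the weighted sums $\sum_\kappa \prod_v x_{\kappa(v)}$ over the three classes (neither violated, only $\epsilon_1$ violated, and only $\epsilon_2$ violated, respectively). Then $X_{G - \{\epsilon_1,\epsilon_2\}} = Z_0 + Z_1 + Z_2$, and $Z_0 = X_G$ because its colorings are proper on every edge of $G$.

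Next I would read off analogous decompositions for the remaining two terms. A coloring counted by $X_{G - \epsilon_1}$ is proper on every edge except possibly $\epsilon_1$; in particular $\epsilon_3$ is proper, so (again by the key observation) such a coloring is either properly colored on $\epsilon_1$ as well---contributing to $Z_0$---or monochromatic on $\epsilon_1$, in which case it automatically lies in $Z_1$. Thus $X_{G - \epsilon_1} = Z_0 + Z_1$ and, symmetrically, $X_{G - \epsilon_2} = Z_0 + Z_2$. Substituting these three identities yields
\[X_{G - \epsilon_1} + X_{G - \epsilon_2} - X_{G - \{\epsilon_1,\epsilon_2\}} = (Z_0 + Z_1) + (Z_0 + Z_2) - (Z_0 + Z_1 + Z_2) = Z_0 = X_G.\]

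The entire argument is a case analysis on the three vertices of the triangle, and the only place it has real content is the key observation. The main point I would be careful about is that $\epsilon_3$ remains an edge---and is therefore required to be properly colored---in each of the graphs $G - \epsilon_1$, $G - \epsilon_2$, and $G - \{\epsilon_1,\epsilon_2\}$, so that the three classes $Z_0, Z_1, Z_2$ are defined consistently and partition the coloring sets above in the way claimed. No other edge of $G$ plays any role, so the identity holds with no further hypotheses on the ambient graph.
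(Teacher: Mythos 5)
Your proof is correct. The paper itself does not prove this proposition---it is imported verbatim from Orellana and Scott with a citation---but your argument is essentially the standard one underlying their Theorem 3.1: the presence of $\epsilon_3$ forces $\kappa(a)\neq\kappa(c)$ in every coloring counted by any of the three terms, so at most one of $\epsilon_1,\epsilon_2$ can be monochromatic, and the identity follows by inclusion--exclusion on the resulting partition $Z_0, Z_1, Z_2$. The decomposition is clean, the disjointness and exhaustiveness of the three classes are justified exactly where they need to be, and your closing remark that $\epsilon_3$ survives in all three deleted graphs is precisely the hypothesis that makes the argument work.
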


Notice that~\Cref{prop:Triple-deletion} requires the graph to contain a triangle. However, one can use this formula to derive other relations for graphs that do not necessarily contain a triangle. An example of such a relation is the following. 
\begin{cor}[{\cite[Corollary 3.2]{ORELLANA20141}}]
\label{cor:Triple-deletion-almost-triangles}
Let $\epsilon_1=v v_1\in E$, $\epsilon_2=v v_2\in E$ and suppose $\epsilon_3=v_1v_2\not\in E$. Then 
\[X_{G} =X_{(G-\eps_1)\cup \epsilon_3} +X_{G-\epsilon_2} -X_{(G-\{\epsilon_1,\epsilon_2\})\cup \epsilon_3} .\]
\end{cor}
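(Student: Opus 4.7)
The plan is to reduce the claim to the Triple Deletion Formula (\Cref{prop:Triple-deletion}) by passing to an auxiliary graph in which the ``missing'' triangle edge $\epsilon_3$ has been added. Specifically, I would introduce $G' := G \cup \{\epsilon_3\}$. Since $\epsilon_1 = vv_1$ and $\epsilon_2 = vv_2$ are already edges of $G$ and $\epsilon_3 = v_1v_2$ is the newly added edge, the three edges $\epsilon_1,\epsilon_2,\epsilon_3$ form a triangle in $G'$, so \Cref{prop:Triple-deletion} may be applied to $G'$ using any two of the three triangle edges.

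I would then write down two instances of that formula, expressing $X_{G'}$ in two different ways. First, applying \Cref{prop:Triple-deletion} with the pair $\{\epsilon_2,\epsilon_3\}$ and using the elementary identifications $G' - \epsilon_3 = G$, $G' - \epsilon_2 = (G-\epsilon_2)\cup \epsilon_3$, and $G' - \{\epsilon_2,\epsilon_3\} = G - \epsilon_2$, I obtain
\[X_{G'} = X_{(G-\epsilon_2)\cup \epsilon_3} + X_G - X_{G-\epsilon_2}.\]
Second, applying the same formula with the pair $\{\epsilon_1,\epsilon_2\}$, so that $\epsilon_3$ remains in each of the three resulting graphs, produces
\[X_{G'} = X_{(G-\epsilon_1)\cup \epsilon_3} + X_{(G-\epsilon_2)\cup \epsilon_3} - X_{(G-\{\epsilon_1,\epsilon_2\})\cup \epsilon_3}.\]

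Equating these two expressions for $X_{G'}$ and cancelling the common summand $X_{(G-\epsilon_2)\cup \epsilon_3}$ yields the desired identity after a single rearrangement. I do not anticipate a genuine obstacle: once the triangle $\epsilon_1\epsilon_2\epsilon_3$ has been manufactured in $G'$, the remainder is pure bookkeeping. The only mildly nonobvious ingredient is recognizing that \Cref{prop:Triple-deletion} must be invoked twice, with overlapping pairs of triangle edges, so that the auxiliary quantity $X_{G'}$ can be eliminated and the identity is expressed entirely in terms of $G$ and its one-edge or two-edge modifications.
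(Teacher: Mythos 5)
Your argument is correct and is essentially the standard derivation of this identity (the paper itself does not reprove it, citing Orellana--Scott instead): form $G'=G\cup\epsilon_3$, apply \Cref{prop:Triple-deletion} to the triangle in $G'$ twice with the overlapping pairs $\{\epsilon_2,\epsilon_3\}$ and $\{\epsilon_1,\epsilon_2\}$, and eliminate $X_{G'}$; all the graph identifications such as $G'-\epsilon_3=G$ and $G'-\{\epsilon_2,\epsilon_3\}=G-\epsilon_2$ check out, and the symmetry of the triangle justifies invoking the formula with either pair of its edges. No gaps.
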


We now introduce the main operation studied in this paper.
\begin{definition}
Given a graph $G$ and a vertex $v$, the \textit{twin of $G$ at $v$} is the graph, denoted by $G_v$, obtained by adding a new vertex $v'$ and connecting $v'$ to $v$ and to all of its neighbors. We refer to this operation as the \textit{twinning} of a graph and to the resulting graph $G_v$ as the \textit{twinned graph}.  By extension, $G_{v,w}$ denotes the graph $G$ twinned at the vertices $v$ and $w$ in succession.
\end{definition} 

A simple example is the complete graph $G=K_n$ on $n$ vertices. For any vertex $v$, $(K_n)_v$ is the complete graph $K_{n+1}$.
We illustrate in~\Cref{fig: twin examples} the twinned path at a leaf and at an interior vertex, and the twinned cycle.

\begin{figure}[ht]
    \centering
        \begin{subfigure}[b]{.4\textwidth}
        \centering
\begin{tikzpicture}[scale = 0.7]
\node(dots1) at (1.25,0){$\cdots$};
\node(1) at (0,0){$\bullet$};
\node(v) at (3.5,0){$\bullet$};
\node(u) at (2.5,0){$\bullet$};
\node(v') at (3.5,1){$\bullet$};
\node(w) at (4.5,0){$\bullet$};
\node[below] at (v){$v$};
\node[below] at (u){$u$};
\node[below] at (w){$w$};
\node[above] at (v'){$v'$};
\node(dots2) at (5.75,0){$\cdots$};
\node(n) at (7,0){$\bullet$};
\draw (3.5,0) -- (3.5,1);
\draw(0,0) -- (dots1);
\draw (dots1) -- (dots2);
\draw(dots2) -- (7,0);
\draw (2.5,0) -- (3.5,1) -- (4.5,0);
\node(dots1) at (3.25,4){$\cdots$};
\node(1) at (2,4){$\bullet$};
\node(v) at (0,3){$\bullet$};
\node(v') at (0,5){$\bullet$};
\node(w) at (1,4){$\bullet$};
\node(middle) at (4.5,4){$\bullet$};
\node[below] at (v){$v$};
\node[below] at (w){$w$};
\node[above] at (v'){$v'$};
\node(n) at (5.75,4){$\bullet$};
\draw(1,4) -- (dots1);
\draw(dots1) -- (5.75,4);
\draw (0,3) -- (1,4) -- (0,5) -- (0,3);
\end{tikzpicture}
        \caption{Twin paths $P_{n,v}$}
    \end{subfigure}
        \begin{subfigure}[b]{0.4\textwidth}
    \centering
\begin{tikzpicture}[scale = 0.7]
\draw(0,2.7) -- (0,1.7);
\draw(0,2.7) -- (1.4,1.4);
\draw(0,1.7) -- (1.4,1.4);
\draw(1.4,1.4) -- (2,0);
\draw(2,0) -- (1.4,-1.4);
\draw(1.4,-1.4) -- (0.5,-2);
\draw(-0.6,-2) --(-1.4,-1.4);
\draw(-1.4,-1.4) -- (-2,0);
\draw(-2,0) -- (-1.4,1.4);
\draw(-1.4,1.4) -- (0,2.7);
\draw(-1.4,1.4) -- (0,1.7);
\node(1) at (0,2.7){$\bullet$};
\node(1twin) at (0,1.7){$\bullet$};
\node(2) at (1.4,1.4){$\bullet$};
\node(3) at (2,0){$\bullet$};
\node(4) at (1.4,-1.4){$\bullet$};
\node(5) at (0,-2.1){$\cdots$};
\node(8) at (-1.4,1.4){$\bullet$};
\node(7) at (-2,0){$\bullet$};
\node(6) at (-1.4,-1.4){$\bullet$};
\node[above, yshift = 2pt] at (1){$v$};
\node[below] at (1twin){$v'$};
\node[left, xshift = -2pt] at (8){$u$};
\node[right, xshift = 2pt] at (2){$w$};
\end{tikzpicture}
\caption{$C_{n,v}$}
\end{subfigure}
\caption{Twinning of the path and cycle graphs.}
\label{fig: twin examples}

\end{figure}
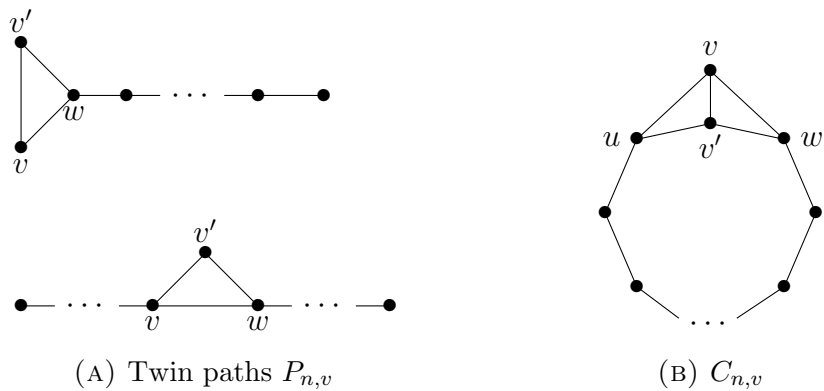

Twinning a non-isolated vertex always produces a triangle, so the triple deletion argument is a natural method to reduce the twinned graph back to the original one, as the following result shows.

\begin{cor}\label{leaf-twinS} 
Let $H$ be a graph on $n$ vertices and let $u$ be a vertex of $H$. Let $H'$ be the graph obtained by adding a new vertex $v$ and the edge $uv$ to $H$ and let $H''$ be the graph obtained by adding a new vertex $w$ and the edge $v w$ to $H'$. Finally let $H'_v$ be the graph $H'$ twinned at vertex $v$, with $v'$ denoting the new vertex. Then 
\[X_{H'_v}= 2(X_{H''}-e_2 X_H).\]
\end{cor}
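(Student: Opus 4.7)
The plan is to apply the Triple Deletion Formula (\Cref{prop:Triple-deletion}) to the triangle created by the twinning operation. Since $v$ is a leaf of $H'$, its only neighbor is $u$, and twinning at $v$ adds a vertex $v'$ adjacent to both $v$ and $u$. This produces a triangle on the vertex set $\{u,v,v'\}$ with edges $\epsilon_1=uv$, $\epsilon_2=uv'$, $\epsilon_3=vv'$. Applying \Cref{prop:Triple-deletion} gives
\[X_{H'_v}=X_{H'_v-\epsilon_1}+X_{H'_v-\epsilon_2}-X_{H'_v-\{\epsilon_1,\epsilon_2\}}.\]

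The next step is to identify each of the three graphs on the right-hand side. In $H'_v-\epsilon_1$ the edge $uv$ is gone, so the only edges among the two new vertices are $uv'$ and $vv'$, yielding a path $u$--$v'$--$v$ attached to $H$ at $u$. This is exactly the structure of $H''$, under the relabeling $v'\mapsto v$, $v\mapsto w$; hence $X_{H'_v-\epsilon_1}=X_{H''}$. By the symmetric argument (swap the roles of $v$ and $v'$), we also have $X_{H'_v-\epsilon_2}=X_{H''}$. Deleting both $\epsilon_1$ and $\epsilon_2$ severs the edge $vv'$ from the rest of the graph entirely, so $H'_v-\{\epsilon_1,\epsilon_2\}$ is isomorphic to the disjoint union $H\sqcup K_2$.

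To finish I would combine the pieces using multiplicativity of $X$ over disjoint unions. Since $X_{K_2}=\sum_{i\ne j}x_ix_j=2e_2$, we obtain $X_{H'_v-\{\epsilon_1,\epsilon_2\}}=2e_2 X_H$. Substituting back gives
\[X_{H'_v}=X_{H''}+X_{H''}-2e_2 X_H=2(X_{H''}-e_2 X_H),\]
as claimed. There is no real obstacle here beyond carefully verifying the two isomorphism claims, and both become transparent once one observes that the symmetry between $v$ and $v'$ in $H'_v$ makes the effect of deleting either of the two triangle edges incident to $u$ the same up to relabeling.
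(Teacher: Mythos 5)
Your proposal is correct and follows exactly the paper's own argument: apply the triple deletion formula to the edges $uv$, $uv'$ of the triangle $\{u,v,v'\}$, identify the two single-deletion graphs with $H''$ and the double-deletion graph with $H\sqcup P_2$, and use $X_{P_2}=2e_2$. You have simply spelled out the isomorphism checks that the paper leaves to the reader (and to its Figure~\ref{fig:Cor_Leaf_TwinS}).
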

    \begin{proof} This is clear by the triple deletion argument using the edges $uv, uv'$ of the triangle $\{u,v,v'\}$ as shown in~\Cref{fig:Cor_Leaf_TwinS}. Note also that $X_{P_2}=2e_2$.
\end{proof}

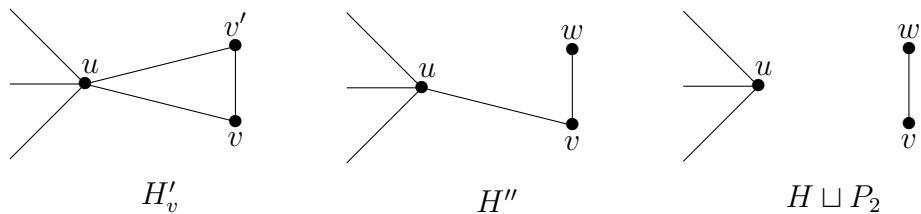
\begin{figure}[ht]
    \centering
\begin{tabular}{ccc}
\begin{tikzpicture}
\draw(0,0) -- (2,-0.5) -- (2,0.5) -- (0,0) -- (-1,-1);
\draw(-1,0) -- (0,0) -- (-1,1);
\node(u1) at (0,0){$\bullet$};
\node(v1) at (2,-0.5){$\bullet$};
\node(vTwin1) at (2,0.5){$\bullet$};
\node[above, xshift = 2pt] at (u1){$u$};
\node[below] at (v1){$v$};
\node[above] at (vTwin1){$v'$};
\node() at (1,-1.5){$H'_v$};
\end{tikzpicture} & \qquad 
\begin{tikzpicture}
\draw(0,0) -- (2,-0.5) -- (2,0.5);
\draw (0,0) -- (-1,-1);
\draw(-1,0) -- (0,0) -- (-1,1);
\node(u1) at (0,0){$\bullet$};
\node(v1) at (2,-0.5){$\bullet$};
\node(vTwin1) at (2,0.5){$\bullet$};
\node[above, xshift = 2pt] at (u1){$u$};
\node[below] at (v1){$v$};
\node[above] at (vTwin1){$w$};
\node() at (1,-1.5){$H''$};
\end{tikzpicture}&\qquad 
\begin{tikzpicture}
\draw(2,-0.5) -- (2,0.5);
\draw (0,0) -- (-1,-1);
\draw(-1,0) -- (0,0) -- (-1,1);
\node(u1) at (0,0){$\bullet$};
\node(v1) at (2,-0.5){$\bullet$};
\node(vTwin1) at (2,0.5){$\bullet$};
\node[above, xshift = 2pt] at (u1){$u$};
\node[below] at (v1){$v$};
\node[above] at (vTwin1){$w$};
\node() at (1,-1.5){$H \sqcup P_2$};
\end{tikzpicture}
\end{tabular}
    \caption{The triple deletion argument used in~\Cref{leaf-twinS} }
    \label{fig:Cor_Leaf_TwinS}
\end{figure}

\subsection{Known \texorpdfstring{$e$}{}-positivity results}\label{subsection: e-positivity background} 

Stanley  defined the chromatic symmetric function $X_G$ of a graph $G$ in 1995.  Since then, many families of graphs have  been examined. We provide an extensive, but by no means exhaustive, list of known $e$-positivity results in~\Cref{table:known_results}. We do not define these classes of graphs, but instead provide references containing their definitions as well as proofs of their $e$-positivity classification.  By convention, a family of graphs listed as ``not $e$-positive'' means that there is at least one graph in that class that is not $e$-positive. The table is roughly sorted chronologically by reference, and it is condensed so that some subclasses of other results are omitted.

\begin{table}[hb]
\begin{center}
\begin{tabular}{|| c | c | c||} 
 \hline
 Graph & Positivity & Reference   \\ [0.5ex] 
 \hline\hline
 Paths & $e$-positive & ~\cite[Proposition 5.3]{stanley_1995} \\ 
 \hline
 Cycles & $e$-positive  &~\cite[Proposition 5.4]{stanley_1995}  \\
 \hline
 Complete graphs & $e$-positive  &~\cite[Equation 3.1]{stanstem} \\
 \hline
 Co-triangle-free graphs & $e$-positive & ~\cite[Theorem 4.3]{stanstem} \\
 \hline 
 $K_\alpha$-chains & $e$-positive &~\cite[Corollary 7.7]{gebhard_sagan_2001} \\
 \hline
 Diamond and path chains & $e$-positive  &~\cite[Theorem 7.8]{gebhard_sagan_2001}\\
 \hline
 (claw, $P_4$)-free graphs & $e$-positive &~\cite[Theorem 1.4]{Tsujie_2018} \\
 \hline
  (claw, diamond)-free graphs & not $e$-positive &~\cite[Lemma 7]{hamel2017chromatic} \\
   \hline
   (claw, co-claw)-free graphs & not $e$-positive &~\cite[Lemma 7]{hamel2017chromatic} \\
   \hline
   (claw, $K_4$)-free graphs & not $e$-positive &~\cite[Lemma 7]{hamel2017chromatic} \\
   \hline
   (claw, $4K_1$)-free graphs & not $e$-positive &~\cite[Lemma 7]{hamel2017chromatic} \\
   \hline
   (claw, $2K_2$)-free graphs & not $e$-positive &~\cite[Lemma 7]{hamel2017chromatic} \\
   \hline
   (claw, $C_4$)-free graphs & not $e$-positive &~\cite[Lemma 7]{hamel2017chromatic} \\
   \hline
 (claw, paw)-free graphs & $e$-positive &~\cite[Theorem 3]{hamel2017chromatic} \\ 
 \hline
 (claw, co-paw)-free graphs & $e$-positive &~\cite[Theorem 4]{hamel2017chromatic} \\
 \hline
 Generalized bull graphs & $e$-positive &\cite[Theorem 3.7]{Cho-Huh_2017} \\
 \hline
 Lollipops and lariats &$e$-positive &~\cite[Theorem 9]{DahlbergVanWilligenburg2018}\\
 \hline
 $P_3$-free graphs & $e$-positive &~\cite[Theorem 5]{foley2018classes}\\
 \hline
 (claw, $K_3$)-free graphs & $e$-positive &~\cite[Theorem 5]{foley2018classes}\\
 \hline
 (claw, co-$P_3$)-free graphs & $e$-positive &~\cite[Theorem 5]{foley2018classes}\\
 \hline
 (co-claw)-free unit interval graphs & $e$-positive &\cite[Theorem 18]{foley2018classes} \\
 \hline
 Generalized pyramid graphs & $e$-positive &~\cite[Theorem 7]{li2019epositivity} \\
 \hline
 $2K_2$-free unit interval graphs & $e$-positive &~\cite[Theorem 13]{li2019epositivity} \\
 \hline
 Triangular ladders & $e$-positive &~\cite[Theorem 22]{dahlberg2019triangular}  \\
    \hline 
   Star graphs & not $e$-positive &~\cite[Example 11]{DahlbergSheVan2020} \\
   \hline
 Saltire and augmented saltire graphs & not $e$-positive &~\cite[Lemmas 4.4, 4.9]{dahlberg2020resolving} \\
 \hline
 Triangular tower graphs & not $e$-positive &~\cite[Lemma 5.4]{dahlberg2020resolving} \\
 \hline
 Tadpole graphs & $e$-positive &~\cite[Theorem 3.1]{Li-Li-Wang-Yang} \\
   \hline
   Line graphs of tadpole graphs & $e$-positive &~\cite[Corollary 3.3]{wang2021cyclechord} \\
   \hline
   Cycle-chord graphs  & $e$-positive &~\cite{wang2024}, \cite[Corollary 4.6]{wang2021cyclechord} \\
   \hline
   Kayak paddle graphs & $e$-positive &~\cite[Proposition 6.7]{aliniaeifard2021chromatic} \\
    \hline
 Generalized nets & not $e$-positive &\cite[Theorem 1]{foley2021spiders} \\
 \hline
   Melting $K_{\alpha} $-chains  & $e$-positive &~\cite[Theorem 3.19]{tom2023signed} \\
   \hline
\end{tabular}
\caption{Known $e$-positivity results and their references.}
\label{table:known_results}
\end{center}
\end{table}
\clearpage

\subsection{A new statistic on partitions}\label{subsec: epsilon-lambda}

In this subsection, we introduce a new statistic on the set of partitions that will allow us to describe $e$-coefficients more compactly. 

Recall that a \emph{partition} $\lambda$ of $n$, written $\lambda \vdash n$, is a weakly decreasing sequence of positive integers $\lambda = (\lambda_1,\ldots, \lambda_\ell)$ that sum to $n$, that is, $\sum_i \lambda_i = n$. We write $\ell = \ell(\lambda)$ for the \emph{length} of the partition, that is, the number of entries in the sequence. 
A partition $\lambda$ of $n$ can also be written   as $\lambda = \left\langle 1^{m_1}, 2^{m_2},\ldots, n^{m_n}\right\rangle$, where $m_i=m_i(\lambda)\geq 0$ denotes the multiplicity of the part $i$ in $\lambda$. 
The \emph{support} of $\lambda$, denoted $\supp(\lambda)$, is the set of distinct parts appearing in $\lambda$, that is,  $\supp(\lambda):=\{i\in \Z_{>0} : m_i(\lambda)\ge 1  \}$.  

Now we are ready to introduce the new statistic. 
\begin{definition}\label{def:support}
For a partition $\lambda$, define $\eps(\lambda)$ to be the quantity
\begin{equation}\label{eqn:eps-lambda} 
\eps(\lambda):= {\ell(\lambda)!\prod_{j\in \supp(\lambda)} \frac{(j-1)^{m_j(\lambda)}}{m_j(\lambda)!}} \qquad \text{ with } \qquad \eps(\emptyset)=1.
\end{equation}

Moreover, for a partition $\lambda$ of $n$ and a part $a$ such $m_a(\lambda)\ge 1$, let $\lambda-a$ denote the partition of $n-a$ obtained by deleting one part equal to $a$ from $\lambda$. By convention, if  $a$ is not a part of $\lambda$, we set $\eps(\lambda-a)=0$. 
\end{definition}
For example, $\eps((n))=n-1$ and $\eps((2^n))=1$ for any positive integer $n$. Additionally, $\epsilon(\lambda)=0$ if $\lambda$ contains a $1$. In~\Cref{table: epsilon}, we include several examples of partitions $\lambda$ together with their statistic $\eps(\lambda)$. 

\begin{table}[ht]
\begin{tabular}{l}
    \begin{tabular}{||c||c ||c ||c|c ||c|c ||c|c|c|c|} 
 \hline
$\lambda$ &(2) & (3) & (4) & (2,2) & (5) & (3,2)   & (6) & (4,2) & (3,3) & (2,2,2)\\ 
\hline 
$\eps(\lambda)$ &1& 2 & 3 & 1 & 4 & 4 & 5 & 6 & 4 &1\\
\hline
\end{tabular} \\[0.2in]
    \begin{tabular}{||c||c| c|c|c|| c|c|c|c|c|c|c|} 
 \hline
$\lambda$ &(7) & (5,2) & (4,3) & (3,2,2) & (8) & (6,2) & (5,3) & (4,4) & (4,2,2)& (3,3,2) & (2,2,2,2)\\ 
\hline 
$\eps(\lambda)$ &6 & 8 & 12 & 6 & 7 & 10 & 16 & 9 & 9 & 12 & 1\\
\hline
\end{tabular}
\end{tabular}
\caption{Examples of $\eps(\lambda)$ for some partitions $\lambda$.}
\label{table: epsilon}
\end{table}

\begin{remark}\label{remark:eps(lambda)-is-nonegative-integer}
     {Note that in~\eqref{eqn:eps-lambda}, 
\[\frac{\ell(\lambda)!}{m_j(\lambda)!}=\binom{\ell(\lambda)}{m_1(\lambda),\dots,m_n(\lambda)},\]}
which is always a nonnegative integer. Thus, $\eps(\lambda)$ is also a nonnegative integer. In fact, $\eps(\lambda)=0$ if and only if 1 is a part in $\lambda$, i.e., $m_1(\lambda)\ge 1$. 
\end{remark}

Next, we present other properties of $\eps(\lambda)$.

\begin{lemma}\label{lem:eps-propertieS} 
Let $\lambda$ and $\mu$ be partitions of $n$ and $m$, respectively. Then, we have the following:

\begin{enumerate}[(a)]
\itemsep0.12in
    \item For $j\in \supp(\lambda)$, $(j-1)\eps(\lambda-j)=m_j(\lambda)\dfrac{\eps(\lambda)}{\ell(\lambda)}$;
    \item $\eps(\lambda) =  \displaystyle{\sum_{j\in \supp(\lambda)}(j-1)\eps(\lambda-j)}
    $; and 
       \item $\displaystyle{\eps(\lambda)\eps(\mu)=\eps(\lambda\cup \mu) {\binom{\ell(\lambda\cup \mu)}{\ell(\lambda)}}^{-1}\prod_{j\in \supp(\lambda\cup \mu)}\binom{m_j(\lambda\cup\mu)}{m_j(\lambda)}}$ where $\lambda\cup \mu$ is the partition of $n+m$ formed by listing the parts of $\lambda$ and $\mu$ together in decreasing order.
\end{enumerate}
    
\end{lemma}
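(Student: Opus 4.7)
All three identities are routine consequences of the explicit product formula
\[\eps(\lambda) = \ell(\lambda)!\prod_{j\in \supp(\lambda)} \frac{(j-1)^{m_j(\lambda)}}{m_j(\lambda)!},\]
so my plan is simply to unpack the definition on both sides of each identity and check that the multiplicities and factorials balance. There are no hidden combinatorial bijections needed, only careful bookkeeping of multiplicities and supports.

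For part (a), I would express $\eps(\lambda - j)$ using $\ell(\lambda - j) = \ell(\lambda) - 1$, $m_j(\lambda - j) = m_j(\lambda) - 1$, and $m_k(\lambda - j) = m_k(\lambda)$ for $k \neq j$. Forming the ratio $\eps(\lambda - j)/\eps(\lambda)$, all factors corresponding to parts $k \neq j$ cancel, leaving only the $j$-factor and the leading factorials. A direct simplification yields $(j-1)\eps(\lambda - j) = \frac{m_j(\lambda)}{\ell(\lambda)}\eps(\lambda)$. One small subtlety is the case $m_j(\lambda) = 1$, in which $j \notin \supp(\lambda - j)$ and so the factor at $j$ disappears from the product for $\eps(\lambda - j)$; this case should be checked separately (or, equivalently, handled by the convention $\tfrac{(j-1)^0}{0!} = 1$), and the identity still holds. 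Part (b) then follows immediately from (a) by summing over $j \in \supp(\lambda)$ and using the basic identity $\sum_{j \in \supp(\lambda)} m_j(\lambda) = \ell(\lambda)$.

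For part (c), I would again unpack both sides using $m_j(\lambda \cup \mu) = m_j(\lambda) + m_j(\mu)$ and $\ell(\lambda \cup \mu) = \ell(\lambda) + \ell(\mu)$. Writing each of the products for $\eps(\lambda)$ and $\eps(\mu)$ as a product over the larger index set $\supp(\lambda \cup \mu)$, trivially extending by $m_j = 0$ where needed, the powers of $(j-1)$ combine cleanly to give $(j-1)^{m_j(\lambda \cup \mu)}$, while the denominators group as $m_j(\lambda)!\,m_j(\mu)! = m_j(\lambda \cup \mu)!\binom{m_j(\lambda \cup \mu)}{m_j(\lambda)}^{-1}$. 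Likewise the leading factorials combine via $\ell(\lambda)!\,\ell(\mu)! = \ell(\lambda \cup \mu)!\binom{\ell(\lambda \cup \mu)}{\ell(\lambda)}^{-1}$. Assembling these pieces reproduces the claimed expression.

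The main obstacle, if one can call it that, is purely bookkeeping: handling the supports $\supp(\lambda)$, $\supp(\mu)$, and $\supp(\lambda \cup \mu)$ in a uniform way, and treating the edge case in (a) where a part disappears after deletion. No step requires any idea beyond algebraic manipulation of the defining product.
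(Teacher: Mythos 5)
Your proposal is correct and follows essentially the same route as the paper: all three parts are proved by direct unpacking of the defining product formula, with (b) obtained from (a) via $\sum_{j\in\supp(\lambda)}m_j(\lambda)=\ell(\lambda)$ and (c) from the additivity of multiplicities and lengths under $\lambda\cup\mu$. The only cosmetic difference is that the paper disposes of the degenerate case $1\in\supp(\lambda)$ in (a) up front (both sides vanish), whereas you phrase the computation as a ratio $\eps(\lambda-j)/\eps(\lambda)$, which should be stated in cross-multiplied form to avoid dividing by zero there; this does not affect the validity of the argument.
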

\begin{proof} 
\begin{enumerate}[(a)]
\itemsep0.1in
    \item Note first that both sides are identically zero if $1\in \supp(\lambda)$. For $j\in \supp(\lambda)$ with $j \ne 1$, this identity follows from the definition by noticing that 
    \[
    \eps(\lambda)=(j-1)\frac{\ell(\lambda)}{m_j(\lambda)}\left( (j-1)^{m_j(\lambda)-1}\prod_{l\in\supp(\lambda), l\ne j} (l-1)^{m_l(\lambda)} \frac{(\ell(\lambda)-1)!}{(m_j(\lambda-1)\prod_{l\ne j} m_l(\lambda)!}\right) .\]

    \item This identity follows from the definition of $\eps(\lambda)$, using $\displaystyle{\sum_{j\in\supp(\lambda)} m_j(\lambda)=\ell(\lambda)}$.

    \item This identity follows by expanding $\eps(\lambda\cup\mu)$,   using 
 $m_j(\lambda\cup\mu )=m_j(\lambda)+m_j(\mu)$ and
 $\ell(\lambda\cup\mu )=\ell(\lambda)+\ell(\mu)$.\qedhere
\end{enumerate}  
 \end{proof}
\begin{remark}
    Intuitively, the formula for $\eps(\lambda)$ can be interpreted as the number of pairs $(w,f)$ of words $w$ on the set $\{1,\ldots,\ell(\lambda)\}$ 
    of type $\lambda$, i.e. with $\lambda_i$ occurrences of the letter $i$,  
    together with a function $f:\{1,\ldots,\ell(\lambda)\}\to \Z$ satisfying $1\leq f(j)\leq \lambda_j-1$ for each $j\in \{1,\ldots,\ell(\lambda)\}$. These are exactly the \textit{codes} of Stembridge~\cite{stembridge_1992} with no fixed points and can be used to prove~\Cref{lem:eps-propertieS} 
    combinatorially. For example, the right-hand side of part $(b)$ can be interpreted as the number of ways of making a code of type $\lambda$ from a code whose type has length $\ell(\lambda)-1$. 
\end{remark} 

\section{\texorpdfstring{$e$}{e}-positivity via generating functions}\label{sec: generating functions}

For given family of graphs $G=\{G_n\}_{n\geq 0}$, one can show $e$-positivity of $X_{G_n}$ by showing that its generating function
\[\cX_G(z)=\sum_{n\geq 0}X_{G_n} z^n\]
can be written in the form
\begin{equation}\label{eq:rational-expr-CXG}\cX_{G}(z)=\frac{P(z)}{1-Q(z)},\end{equation}
where $P(z)$ and $Q(z)$ are $e$-positive formal power series in $z$. For the path $P_n$ and the cycle $C_n$, it is known from Stanley's original paper~\cite{stanley_1995} that this can be done. (See also~\cite[p.242]{CarScoVau1976} for paths.)

\begin{theorem}[{\cite[Propositions 5.3 and 5.4]{stanley_1995}}] \label{thm:RPS-gf-paths-cycles}
\begin{align*}
    \cX_P(z)&:=\sum_{n\geq 0} X_{P_n} z^n=\frac{\sum_{i\geq 0}e_i z^i}{1-\sum_{i\geq 1}(i-1)e_i z^i}, \\
    \cX_C(z) &:= \sum_{n\geq 2} X_{C_n} z^n=\frac{\sum_{i\geq 2}i(i-1)e_i  z^i}{1-\sum_{i\geq 1}(i- 1)e_i z^i}.
\end{align*}  
\end{theorem}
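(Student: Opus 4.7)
The plan is to use the classical power-sum expansion
\[
X_G = \sum_{A \subseteq E(G)} (-1)^{|A|} p_{\lambda(G, A)},
\]
where $\lambda(G, A)$ is the partition of component sizes of the spanning subgraph $(V(G), A)$, together with the log-derivative identity
\[
S(z) \; := \; \sum_{a \geq 1} (-1)^{a-1} p_a z^a \; = \; z\, E'(z)/E(z), \qquad E(z) := \sum_{i \geq 0} e_i z^i,
\]
which comes from differentiating $\log E(z) = \sum_i \log(1 + x_i z)$. The target formulas can also be rewritten as $E(z)-zE'(z) = 1 - \sum_{i\geq 1}(i-1)e_iz^i$ and $z^2 E''(z) = \sum_{i\geq 2} i(i-1) e_i z^i$, so everything reduces to identifying the numerators and denominators in these terms.

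For the path, I would first observe that edge subsets $A \subseteq E(P_n)$ are in bijection with compositions $(a_1, \ldots, a_r) \models n$, where the $a_i$ are the component sizes of $(V, A)$ read left-to-right, and $|A| = n - r$. Substituting into the power-sum expansion and summing over $n$ collapses $\cX_P(z)$ to a geometric series in $S(z)$:
\[
\cX_P(z) \; = \; \sum_{r \geq 0} S(z)^r \; = \; \frac{1}{1 - S(z)} \; = \; \frac{E(z)}{E(z) - zE'(z)},
\]
which is the stated formula.

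For the cycle, I would split the edge-subset sum for $C_n$ into the case $A = E$ (a single component, contributing $(-1)^n p_n$) and the case $A \subsetneq E$, in which the $t \geq 1$ missing edges cut $C_n$ into $t$ arcs whose sizes form a partition of $n$ with $t$ parts. To handle the cyclic symmetry, I would pass to pairs (edge subset, marked missing edge): this gives a bijection with pairs (linear composition of $n$ with $t$ parts, rotation position in $\{1, \ldots, n\}$), which shows that for each linear composition $(a_1, \ldots, a_t) \models n$ the set of edge subsets realizing it has total weight $n/t$. Writing $R(z) := z S'(z) = \sum_{a \geq 1} a (-1)^{a-1} p_a z^a$, symmetrizing over which index carries the extra $a_i$, the case $A \subsetneq E$ collapses to $R(z)/(1 - S(z))$ (once the spurious $n=1$ contribution is removed). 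Combining with the $A = E$ contribution $\sum_{n\ge 2}(-1)^n p_n z^n = p_1z - S(z)$ yields
\[
\cX_C(z) \; = \; \frac{R(z) - S(z) + S(z)^2}{1 - S(z)}.
\]
To finish, I would check by direct computation that $E(z) S(z) = zE'(z)$ and its derivative imply $E(z)(R(z) - S(z) + S(z)^2) = z^2 E''(z)$, while $E(z)(1 - S(z)) = E(z) - zE'(z)$, giving exactly $\cX_C(z) = z^2 E''(z)/(E(z) - zE'(z))$.

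The hard part will be the bookkeeping in the cycle case: the edge-subset-to-partition correspondence is many-to-one with multiplicities depending on the cyclic stabilizer of the arc partition, so some care is needed to confirm that the pair-marking trick indeed produces the uniform weight $n/t$ regardless of symmetries in $(a_1, \ldots, a_t)$. Once that is settled, the algebraic simplification $R - S + S^2 = z^2 E''(z)/E(z)$ is mechanical, and the path case is immediate from the composition bijection together with the log-derivative identity for $E(z)$.
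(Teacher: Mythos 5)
Your argument is correct, but note that the paper offers no proof of this statement to compare against: Theorem~\ref{thm:RPS-gf-paths-cycles} is imported verbatim from Stanley's Propositions 5.3 and 5.4 (with the path case traced back to Carlitz--Scoville--Vaughan), so the derivation you give is a self-contained substitute rather than an alternative to anything in the text. Your route --- Whitney's subset expansion $X_G=\sum_{A\subseteq E}(-1)^{|A|}p_{\lambda(G,A)}$ combined with the logarithmic derivative $S(z)=zE'(z)/E(z)$ --- is sound. The path case is immediate from the composition bijection, since $(-1)^{|A|}=\prod_i(-1)^{a_i-1}$ makes the sum over compositions a geometric series in $S(z)$. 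For the cycle, the two points that require care both check out: (i) the marked-missing-edge double count gives $\sum_{A:\,t\text{ missing}}\prod p_{\mathrm{arcs}}=\tfrac{n}{t}\sum_{(a_1,\dots,a_t)\models n}\prod_i p_{a_i}$ uniformly, with no correction for cyclic symmetry of the composition, precisely because each subset is counted exactly $t$ times on one side and each linear composition exactly $n$ times on the other; and (ii) the spurious $n=1$ term $p_1z$ in $R(z)/(1-S(z))$ exactly cancels the $p_1z$ in the full-subset contribution $p_1z-S(z)$, so no ad hoc subtraction survives. The closing identity $R-S+S^2=z^2E''(z)/E(z)$ follows from differentiating $E(z)S(z)=zE'(z)$, and I verified the resulting series reproduces $X_{C_2}=2e_2$ and $X_{C_3}=6e_3$. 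One cosmetic caveat: for $n=2$ the marking argument implicitly treats $C_2$ as a doubled edge (a multigraph), which is consistent with the convention $X_{C_2}=2e_2$ used in the paper, but you may want to state that explicitly or start the cycle argument at $n\ge 3$ and check $n=2$ separately.
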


Note in particular that $X_{P_0}=1$.

In this section, we establish identities of the form~\eqref{eq:rational-expr-CXG}
for several families of twinned graphs 
by applying generating function techniques to the relations obtained from the triple deletion argument. 

It is useful to convert the preceding result to a recurrence relation for the chromatic symmetric function as follows.  We will use this formulation several times in this paper, notably in the proofs of~\Cref{lemma: fl formula} and~\Cref{theorem:twin-path-epoS-vertex}, as well as in~\Cref{sec:recurrences} .

\begin{prop}\label{prop:RECURSION-pathS-cycleS} 
We have the following recurrence relations:
\begin{enumerate}[(a)]
    \item\label{RECURSION-pathS} For $n\ge 3$, 
    $\displaystyle{X_{P_n}=n e_n +\sum_{j=2}^{n-1}(j-1) e_j X_{P_{n-j}}}
    =ne_n+\sum_{i=1}^{n-2} (n-i-1) e_{n-i} X_{P_i}$, 
    with initial conditions $X_{P_0}=1$, 
 $X_{P_1}=e_1$ and $ X_{P_2}=2 e_2$.

    \item\label{RECURSION-cycleS} For $n\ge 4$, $\displaystyle{X_{C_n}=n(n-1)e_n +\sum_{j=2}^{n-2}(j-1) e_j X_{C_{n-j}}}$, with initial conditions $X_{C_1}=0$, $X_{C_2}=2e_2$, and $X_{C_3}=6e_3$.
\end{enumerate}
\end{prop}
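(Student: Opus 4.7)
The plan is to deduce both recurrences directly from the generating function formulas of Theorem~\ref{thm:RPS-gf-paths-cycles} by clearing denominators and extracting coefficients of $z^n$; this is essentially a bookkeeping exercise in formal power series.

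For part (a), I first observe that the factor $(i-1)e_i$ vanishes at $i=1$, so the denominator of $\cX_P(z)$ can be rewritten as $1-\sum_{i\ge 2}(i-1)e_i z^i$. Multiplying both sides of the generating function identity by this denominator and rearranging yields
$$\cX_P(z)=\sum_{i\ge 0} e_i z^i + \cX_P(z)\sum_{i\ge 2}(i-1)e_i z^i.$$
Taking the coefficient of $z^n$ for $n\ge 3$ produces $X_{P_n}$ on the left, and $e_n+\sum_{i=2}^{n}(i-1)e_i X_{P_{n-i}}$ on the right via the Cauchy product. Isolating the extreme term $i=n$, which contributes $(n-1)e_n X_{P_0}=(n-1)e_n$ since $X_{P_0}=1$, and combining it with the free $e_n$ gives the claimed leading term $n e_n$. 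The second form of the recurrence is just the reindexing $i\mapsto n-i$ in the summation. The initial conditions $X_{P_0}=1$, $X_{P_1}=e_1$, and $X_{P_2}=2 e_2$ are read off directly as the $z^0, z^1, z^2$ coefficients of $\cX_P(z)$ (with the $z^2$ coefficient receiving contributions $e_2$ from the numerator and $e_1\cdot e_1=e_1^2$ from the series expansion of the denominator; together with $e_1^2 = 2e_2+\cdots$ in the $e$-basis one confirms $X_{P_2}=2e_2$, or more simply one verifies the recurrence bottoms out correctly).

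For part (b), the same procedure applied to $\cX_C(z)$ gives
$$\cX_C(z)=\sum_{i\ge 2} i(i-1)e_i z^i + \cX_C(z)\sum_{i\ge 2}(i-1)e_i z^i.$$
Extracting $[z^n]$ for $n\ge 2$ yields $X_{C_n}=n(n-1)e_n+\sum_{i=2}^{n}(i-1)e_i X_{C_{n-i}}$. Since $\cX_C(z)$ is indexed from $n\ge 2$, I interpret $X_{C_0}=0$, and the stated boundary value $X_{C_1}=0$ then kills both the $i=n$ and $i=n-1$ terms in the convolution, collapsing the sum to the advertised upper bound $i=n-2$ (valid for $n\ge 4$). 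The values $X_{C_2}=2 e_2$ and $X_{C_3}=6 e_3$ are precisely the $z^2$ and $z^3$ coefficients of the numerator, since the convolution contributes nothing at those degrees given the vanishing of $X_{C_0}$ and $X_{C_1}$.

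The proof is a formal manipulation with no substantive obstacle; the only care required is correctly absorbing the $i=n$ (and, for cycles, $i=n-1$) boundary terms of the Cauchy product into the leading $e_n$ coefficient, and checking that the small-$n$ base cases are consistent with the initial data stipulated.
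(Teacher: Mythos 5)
Your proof is correct and is precisely the argument the paper intends: Proposition~\ref{prop:RECURSION-pathS-cycleS} is presented as a direct conversion of the generating functions of Theorem~\ref{thm:RPS-gf-paths-cycles}, obtained by clearing the common denominator $1-\sum_{i\ge 2}(i-1)e_i z^i$ and extracting the coefficient of $z^n$, with the boundary terms $i=n$ (and, for cycles, $i=n-1$) of the Cauchy product absorbed into $ne_n$ or killed by $X_{C_0}=X_{C_1}=0$ exactly as you describe. The only blemish is your parenthetical on $X_{P_2}$: since the $i=1$ summand of the denominator is $(1-1)e_1z=0$, the coefficient of $z^2$ in $\cX_P(z)$ is $e_2$ from the numerator plus $1\cdot e_2$ from the geometric expansion of the denominator, giving $2e_2$ with no $e_1\cdot e_1$ contribution (and indeed $e_1^2\neq 2e_2$, so the identity you gesture at would not close the computation); in any case $X_{P_2}=2e_2$ is immediate from the definition of the chromatic symmetric function of a single edge, so this aside does not affect the validity of the argument.
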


\subsection{Symmetric function identities and technical lemmas}
\label{sec:Sym-fn-identitieS}

In this section, we examine more closely the relationship between the generating function $E(z)$ for the elementary symmetric functions, and the generating functions $\cX_P(z)$ and $\cX_C(z)$ for the chromatic symmetric functions of the path and the cycle.
We also present some formulas for several families of coefficients appearing in the $e$-expansion of $\cX_P(z)$ and $\cX_C(z)$. 
We start by introducing some definitions and notation to facilitate our study. 

Let $E(z):=\sum_{i\ge 0} e_i z^i$ be the generating function for the elementary symmetric functions and define
\begin{equation*}
D(z):=E(z)-zE'(z)
=1-\sum_{i\ge 2}(i-1) e_i z^i.
\end{equation*}
\Cref{thm:RPS-gf-paths-cycles} can then be rewritten as: 
\begin{equation}\label{eqn:gf-pathS-cycleS}
\cX_P(z)=\frac{E(z)}{D(z)} \qquad \text{ and } \qquad \cX_C(z)=\frac{z^2 E''(z)}{D(z)}.
\end{equation}

It will be useful for our study to collect here the definitions of several $e$-positive series and their truncations and tails. Considering $k\geq 2$ whenever it appears, we define
\begin{equation}\label{eqn:KEY-epos-serieS}
\begin{array}{lcl}
&\qquad & \displaystyle{  E_{\ge k}(z)\!=\!\sum_{i\ge k}  e_i z^i,}  \\[0.2in]
\displaystyle{K(z)\!=\!\sum_{i\ge 2} i e_i z^i,}& \qquad  & \displaystyle{K_{\ge k}(z)\!=\!\sum_{i\ge k} i e_i z^i,}\\[0.2in] 
\displaystyle{G(z)\!=\!1\!-\!D(z)\!=\!\sum_{i\ge 2} (i-1) e_i z^i,} & & 
\displaystyle{G_{\ge k} (z)=\sum_{i\ge k} (i-1) e_i z^i,} \\[0.2in] 
\displaystyle{\frac{1}{D(z)}=\sum_{i\ge 0} G(z)^i}, &  & 
\displaystyle{ G_{\le k}(z)\!=\!\sum_{2\le i\le k} (i-1) e_i z^i\!=\!G(z)\!-\!G_{\ge k+1} (z)}.
 \end{array}
\end{equation}

The next lemma collects some $e$-positivity results concerning the generating functions introduced above. 
\begin{lemma}\label{lem:Ez-identitieS} 
\hspace{0.2cm}
\begin{enumerate}[(a)]
\itemsep0.2cm
\item The following expressions are $e$-positive: 
\begin{enumerate}[(i)]
\itemsep0.2cm
    \item $z^2 E''(z)-zE'(z)+e_1z$;
    \item $2 z^2 E''(z) -3z E'(z)+3e_1z+2e_2z^2$; and
    \item $z^2E''(z)-3zE'(z)+3E(z)+e_2z^2$.
\end{enumerate}

\item The following expressions can be written as rational functions with $e$-positive numerators: 
\begin{enumerate}[(i)]
\itemsep0.2cm
    \item $\cX_P(z)-(1+e_1z)$; and
    \item $(1+e_1z) \cX_C(z) -\cX_P(z) +1+e_1z$.
\end{enumerate}

\end{enumerate}
\end{lemma}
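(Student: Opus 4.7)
The plan is to treat each identity by direct manipulation of formal power series in $z$, using only the explicit expansions $E(z)=\sum_{i\ge 0}e_iz^i$, $zE'(z)=\sum_{i\ge 1}ie_iz^i$, $z^2 E''(z)=\sum_{i\ge 2}i(i-1)e_iz^i$, the definition $G(z)=\sum_{i\ge 2}(i-1)e_iz^i=1-D(z)$, and the formulas $\cX_P(z)=E(z)/D(z)$ and $\cX_C(z)=z^2E''(z)/D(z)$ from \eqref{eqn:gf-pathS-cycleS}.

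For part (a), for each of the three expressions I will simply extract the coefficient of $e_iz^i$ and verify that it is nonnegative. For (i), the $-zE'(z)$ contributes $-i\,e_iz^i$ while $z^2E''(z)$ contributes $i(i-1)e_iz^i$, giving coefficient $i(i-2)$ for $i\ge 2$; the spurious $-e_1z$ is cancelled by the added $+e_1z$, and the $i=2$ term vanishes, so the expression equals $\sum_{i\ge 3}i(i-2)e_iz^i$. The same coefficient-chasing yields $\sum_{i\ge 3}i(2i-5)e_iz^i$ for (ii), where the $+3e_1z$ and $+2e_2z^2$ exactly cancel the $i=1$ and $i=2$ contributions, and $3+\sum_{i\ge 4}(i-1)(i-3)e_iz^i$ for (iii), where the $+e_2z^2$ cancels the $i=2$ contribution and the $i=3$ term is automatically zero. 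All three are manifestly $e$-positive.

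For part (b), the strategy is to put things over the common denominator $D(z)$ and show the resulting numerator is $e$-positive. For (i), using $1+e_1z=E(z)-\sum_{i\ge 2}e_iz^i$ and $D(z)=1-G(z)$, one finds
\[
\cX_P(z)-(1+e_1z)=\frac{E(z)-(1+e_1z)(1-G(z))}{D(z)}=\frac{\sum_{i\ge 2}e_iz^i+(1+e_1z)G(z)}{D(z)},
\]
whose numerator is visibly $e$-positive. For (ii), clearing denominators in the same way yields the numerator
\[
(1+e_1z)z^2E''(z)-E(z)+(1+e_1z)D(z)=(1+e_1z)\bigl[z^2E''(z)-G(z)\bigr]-\sum_{i\ge 2}e_iz^i.
\]
The key observation is the identity $z^2E''(z)-G(z)=\sum_{i\ge 2}(i-1)^2 e_iz^i$, which reduces the numerator to
\[
\sum_{i\ge 2}\bigl[(i-1)^2-1\bigr]e_iz^i+e_1z\sum_{i\ge 2}(i-1)^2 e_iz^i=\sum_{i\ge 3}i(i-2)e_iz^i+e_1z\sum_{i\ge 2}(i-1)^2 e_iz^i,
\]
since $(i-1)^2-1=i(i-2)$ vanishes at $i=2$. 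This is once again $e$-positive.

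The computations are essentially routine bookkeeping and I do not expect any real obstacle; the subtlety is mainly organizational. What makes the identities work is that the specific small-$i$ correction terms ($+e_1z$, $+2e_2z^2$, $+e_2z^2$, and the $+(1+e_1z)$ in (b)(ii)) are precisely tuned to cancel the negative low-degree contributions that would otherwise appear in the coefficient polynomials $i(i-2)$, $i(2i-5)$, $(i-1)(i-3)$, and $(i-1)^2-1$. These correction terms are presumably not a priori obvious but emerge naturally from the chromatic symmetric function identities to be derived in the subsequent sections, which is why the lemma is posed in this somewhat opaque form.
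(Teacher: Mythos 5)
Your proposal is correct and follows essentially the same route as the paper: part (a) by direct coefficient extraction (yielding the same series $\sum_{i\ge 3}i(i-2)e_iz^i$, $\sum_{i\ge 3}(2i^2-5i)e_iz^i$, and $3+\sum_{i\ge 4}(i-1)(i-3)e_iz^i$), and part (b) by clearing the common denominator $D(z)$ and exhibiting an $e$-positive numerator. Your reorganization in (b)(ii) via $z^2E''(z)-G(z)=\sum_{i\ge 2}(i-1)^2e_iz^i$ produces exactly the same numerator the paper obtains by reusing (a)(i), so the difference is cosmetic.
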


\begin{proof} \phantom{nothing}
\begin{enumerate}[(a)]
    \itemsep0.1in
\item The $e$-positivity results follow, respectively, from the identities:
\begin{enumerate}[(i)]
\itemsep0.3cm
    \item 
    $z^2 E''(z)-zE'(z)=-e_1z+\sum_{i\ge 3} i(i-2) e_i z^i$;
    \item 
    $2 z^2 E''(z) -3z E'(z)= -3e_1z-2e_2z^2 + \sum_{i\ge 3} (2i^2-5i) e_i z^i$; and
    \item  
    $z^2E''(z)-3zE'(z)+3E(z)=3-e_2z^2+\sum_{i\ge 4} (i-1)(i-3) e_i z^i$.
\end{enumerate}

\item For the results concerning $e$-positive numerators, we have that:
\begin{enumerate}[(i)]
    \item By~\eqref{eqn:gf-pathS-cycleS},
\begin{align*}
\cX_P(z)-(1+e_1z)
&=\frac{zE'(z)+e_1z[zE'(z)-E(z)]}{D(z)} \\
&=\frac{\sum_{i\ge 2} i e_i z^i +e_1z\sum_{i\ge 2} (i-1) e_i z^i }
{D(z)}.
\end{align*}

\item By the previous item,
\begin{align*}
(1+e_1z) \cX_C(z) -\cX_P(z) +1+e_1z  
&=\frac{(1+e_1z)[z^2E''(z)-zE'(z)] +e_1z E(z)}{D(z)}\\
&=\frac{(1+e_1z)F_1(z) +e_1z(E(z)-1-e_1z)}{D(z)},
\end{align*} 
where $F_1(z) =\sum_{i\ge 3} i(i-2) e_i z^i$ and 
$E(z)-1-e_1z=\sum_{i\ge2} e_iz^i$ are $e$-positive. \qedhere
\end{enumerate}
\end{enumerate}
\end{proof}

Using~\Cref{lem:Ez-identitieS}, we get an $e$-positive expression for the generating function for paths that isolates those terms containing $e_1$ and that is different from the one in~\Cref{thm:RPS-gf-paths-cycles}. 
\begin{cor}\label{cor:new-gf-PATHSS}
We have 
\begin{equation*}
\cX_P(z)=\frac{K(z)}{D(z)} +e_1z\frac{G(z)}{D(z)} +(1+e_1z).
    \end{equation*}
\end{cor}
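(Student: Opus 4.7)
The plan is to observe that this corollary is essentially a rewriting of Lemma 3.3(b)(i). Recall that part of that lemma establishes the identity
\[
\cX_P(z) - (1+e_1 z) = \frac{\sum_{i\ge 2} i e_i z^i + e_1 z \sum_{i\ge 2} (i-1) e_i z^i}{D(z)}.
\]
So the first step is simply to identify the two sums in the numerator using the notation from \eqref{eqn:KEY-epos-serieS}: by definition, $K(z) = \sum_{i\ge 2} i e_i z^i$ and $G(z) = \sum_{i\ge 2} (i-1) e_i z^i$. Substituting these identifications yields
\[
\cX_P(z) - (1+e_1 z) = \frac{K(z) + e_1 z\, G(z)}{D(z)} = \frac{K(z)}{D(z)} + e_1 z \frac{G(z)}{D(z)},
\]
and rearranging gives the claimed formula.

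There is essentially no obstacle here; the only subtlety is making sure the algebraic identity in Lemma 3.3(b)(i) is unpacked correctly. If one prefers a self-contained derivation, one could alternatively start from $\cX_P(z) = E(z)/D(z)$, write $E(z) = D(z) + G(z)$, so that $\cX_P(z) = 1 + G(z)/D(z)$, and then split $G(z) = (1-e_1 z)\cdot \text{(something)}$ appropriately; however, this route is more cumbersome than simply invoking the identity already established. The main work has been done in proving Lemma 3.3, so the present corollary is a one-line consequence and is stated separately because the decomposition isolates precisely the contributions involving $e_1$, which will be useful when comparing to the generating functions for the various twinned families studied in the remainder of the paper.
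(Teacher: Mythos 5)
Your main argument is correct and is exactly the paper's (implicit) proof: the corollary is just Lemma~\ref{lem:Ez-identitieS}(b)(i) with the two sums in the numerator recognized as $K(z)$ and $G(z)$ from \eqref{eqn:KEY-epos-serieS}, followed by a rearrangement. One small caution about your disclaimed alternative route: the identity $E(z)=D(z)+G(z)$ is false, since $D(z)=1-G(z)$ gives $D(z)+G(z)=1$; the correct starting point would be $E(z)=D(z)+zE'(z)$, i.e.\ $\cX_P(z)=1+\frac{zE'(z)}{D(z)}=1+\frac{K(z)+e_1z}{D(z)}$, from which the stated formula also follows using $\frac{1}{D(z)}=1+\frac{G(z)}{D(z)}$.
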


\vspace{0.5cm}

We now analyze these generating functions to extract closed formulas for the coefficients in the $e$-expansions. Recall the statistic on partitions $\varepsilon(\lambda)$ introduced in~\Cref{subsec: epsilon-lambda}. 

We start with a result that shows the relation between the coefficients of $G(z)^k$ and $\dfrac{1}{D(z)}$ in their $e$-expansion and $\eps(\lambda)$.
\begin{lemma}\label{lem:expandS-1overDz}
The coefficient of $e_\lambda z^{|\lambda|}$ in $G(z)^k$ is $\eps(\lambda)$ and hence 
\[\frac{1}{D(z)}=\sum_{\lambda} \eps(\lambda)e_\lambda z^{|\lambda|},\]
where the sum is over all partitions $\lambda$.
\end{lemma}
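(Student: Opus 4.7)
The plan is to expand $G(z)^k$ as an iterated sum and group monomials by the partition they form. By definition,
\[G(z)^k=\sum_{(i_1,\ldots,i_k)\,:\,i_j\ge 2}\prod_{j=1}^{k}(i_j-1)\,e_{i_j}z^{i_j}.\]
Fix a partition $\lambda$ with $\ell(\lambda)=k$ and part-multiplicities $m_i=m_i(\lambda)$. The tuples $(i_1,\ldots,i_k)$ that contribute to $e_\lambda z^{|\lambda|}$ are precisely the distinct orderings of the parts of $\lambda$, of which there are $\binom{k}{m_1,m_2,\ldots}=\frac{k!}{\prod_i m_i!}$, and each such tuple contributes $\prod_{i\in\supp(\lambda)}(i-1)^{m_i}$. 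So the coefficient of $e_\lambda z^{|\lambda|}$ in $G(z)^k$ equals
\[\frac{\ell(\lambda)!}{\prod_i m_i(\lambda)!}\prod_{i\in\supp(\lambda)}(i-1)^{m_i(\lambda)}=\eps(\lambda),\]
matching the definition~\eqref{eqn:eps-lambda}. For partitions with $\ell(\lambda)\neq k$ the coefficient is of course zero, and the edge case $1\in\supp(\lambda)$ is automatically handled because $G(z)$ has no $e_1 z$ term (equivalently, a factor $(1-1)=0$ would appear), which is consistent with $\eps(\lambda)=0$ whenever $m_1(\lambda)\ge 1$.

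For the second assertion, since $G(z)$ has vanishing constant term, the geometric series identity
\[\frac{1}{D(z)}=\frac{1}{1-G(z)}=\sum_{k\ge 0}G(z)^k\]
holds as formal power series, as already noted in~\eqref{eqn:KEY-epos-serieS}. Summing the previous computation over $k$, and using the convention $\eps(\emptyset)=1$ for the $k=0$ term, yields
\[\frac{1}{D(z)}=\sum_{k\ge 0}\sum_{\lambda\,:\,\ell(\lambda)=k}\eps(\lambda)\,e_\lambda z^{|\lambda|}=\sum_{\lambda}\eps(\lambda)\,e_\lambda z^{|\lambda|},\]
as required.

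There is essentially no obstacle: the proof is a one-step bookkeeping argument that reads $\eps(\lambda)$ directly off a multinomial expansion of $G(z)^k$. The only care needed is to match the multinomial factor $\frac{\ell(\lambda)!}{\prod m_i!}$ with the prefactor $\ell(\lambda)!\prod \frac{1}{m_j!}$ in the definition of $\eps(\lambda)$, and to verify the $k=0$ term of the geometric series is correctly captured by $\eps(\emptyset)=1$.
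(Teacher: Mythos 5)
Your proof is correct and follows essentially the same route as the paper: a direct multinomial expansion of $G(z)^k$, grouping tuples by the partition they form, followed by the geometric series $\frac{1}{D(z)}=\sum_{k\ge 0}G(z)^k$. If anything, you are slightly more careful than the paper's one-line computation, since you make the multinomial factor $\ell(\lambda)!/\prod_i m_i(\lambda)!$ counting the orderings explicit rather than absorbing it silently into $\eps(\lambda)$.
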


\begin{proof}
    This follows by manipulating the formal series directly:
    \[
        G(z)^k=\lp \sum_{i\geq 2} (i-1)e_iz^i\rp^k
        =\sum_{{\substack{\lambda\\ \ell(\lambda)=k}}} e_\lambda  z^{|\lambda|}\prod_{i\geq 2} (i-1)^{m_i(\lambda)}
        =\sum_{{\substack{\lambda\\ \ell(\lambda)=k}}} \eps(\lambda) e_\lambda   z^{|\lambda|}. \qedhere
    \]
\end{proof}

We end this subsection by showing that several families of coefficients in the $e$-expansion of $\cX_P(z)$ and $\cX_C(z)$   can be expressed compactly in terms of $\eps(\lambda)$. (See also~\cite{Wolf_CSF_Path_Cycle}.)

\begin{prop}\label{prop:coeffs-pathS-cycleS}
Given a graph $G$, let $c_\lambda$ be the coefficient of $z^{|\lambda|}e_\lambda$ in $\mathcal{X}_{G}$, that is, $\mathcal{X}_{G}=\sum c_\lambda z^{|\lambda|} e_{\lambda}$. Then, we have the following:
\begin{enumerate}[(a)]
\itemsep0.1in
\item For $G=P_n$, 
$c_\lambda = \displaystyle{\eps(\lambda)+\sum_{a\in \supp(\lambda)} \eps(\lambda-a)
=\sum_{a\in\supp(\lambda)} a\,\eps(\lambda-a)}$. 

In particular, if $\lambda = 1 \cup \mu$ for some partition $\mu$, then $\displaystyle{c_\lambda = \sum_{\substack{a\in \supp(\mu)\\ a\ge 2}} (a-1) \eps(\mu-a)}$.

Moreover, we can also extract  particular coefficients like 
\begin{equation*}
c_{(n)}=n,
 \qquad  c_{(n-1,1)}=n-2,  \qquad  c_{(2^k)}=2,  \quad \text{and} \quad c_{(2^k,1)}=1.
\end{equation*}
\item For $G=C_n$, we have that

$c_\lambda = \displaystyle{\sum_{a\in \supp(\lambda)} a(a-1) \,\eps(\lambda-a)}$.
\end{enumerate}
\end{prop}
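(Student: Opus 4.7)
The plan is to manipulate the rational generating functions for $\cX_P(z)$ and $\cX_C(z)$ directly. From \eqref{eqn:gf-pathS-cycleS}, $\cX_P(z) = E(z)/D(z)$, and \Cref{lem:expandS-1overDz} already gives $1/D(z) = \sum_\mu \eps(\mu)\, e_\mu z^{|\mu|}$. Multiplying by $E(z) = \sum_{i \geq 0} e_i z^i$ and collecting the coefficient of $e_\lambda z^{|\lambda|}$, I sum $\eps(\mu)$ over the pairs $(i, \mu)$ with $i \geq 0$, $i + |\mu| = |\lambda|$, and $e_i e_\mu = e_\lambda$. The choice $i = 0$ forces $\mu = \lambda$ and contributes $\eps(\lambda)$, while for each $a \in \supp(\lambda)$ the choice $(i,\mu) = (a,\lambda - a)$ contributes $\eps(\lambda - a)$. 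This yields the first claimed formula
\[
c_\lambda = \eps(\lambda) + \sum_{a \in \supp(\lambda)} \eps(\lambda - a).
\]
The equivalent expression $c_\lambda = \sum_{a \in \supp(\lambda)} a\, \eps(\lambda - a)$ then follows by invoking \Cref{lem:eps-propertieS}(b), which rewrites $\eps(\lambda) = \sum_{a \in \supp(\lambda)} (a-1) \eps(\lambda - a)$, and regrouping.

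For the reduction when $\lambda = 1 \cup \mu$, I use the vanishing $\eps(\nu) = 0$ whenever $1 \in \supp(\nu)$. If $\mu$ also contains a $1$, every relevant $\eps$-term vanishes and $c_\lambda = 0$, which matches the claim (the right-hand side is also zero). Otherwise, in $c_\lambda = \sum_a a\, \eps(\lambda - a)$ only the $a = 1$ term survives, giving $c_\lambda = \eps(\mu)$, which by one more application of \Cref{lem:eps-propertieS}(b) equals $\sum_{a \in \supp(\mu),\, a \geq 2}(a-1)\eps(\mu - a)$. The explicit values are then immediate: $c_{(n)} = n\cdot \eps(\emptyset) = n$; $c_{(n-1,1)} = \eps((n-1)) = n - 2$; $c_{(2^k)} = 2 \cdot \eps((2^{k-1})) = 2$; and $c_{(2^k, 1)} = \eps((2^k)) = 1$.

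Part (b) follows by the same template, replacing $E(z)$ with $z^2 E''(z) = \sum_{i \geq 2} i(i-1) e_i z^i$ in the product $(z^2 E''(z))\cdot(1/D(z))$. Extracting $[e_\lambda z^{|\lambda|}]$ now forces $i \in \supp(\lambda)$ (since the $i = 0, 1$ terms are absent) with $\mu = \lambda - i$, yielding $c_\lambda = \sum_{a \in \supp(\lambda)} a(a-1)\, \eps(\lambda - a)$, as claimed.

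No step is truly hard; this is essentially careful coefficient extraction combined with the identities packaged in \Cref{lem:eps-propertieS}. The only real subtlety is tracking which $\eps$-terms vanish when $1 \in \supp(\lambda)$, both to reconcile the two equivalent formulas in (a) and to verify the specialized form for $\lambda = 1 \cup \mu$.
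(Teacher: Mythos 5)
Your proposal is correct and follows essentially the same route as the paper: coefficient extraction from $\cX_P(z)=E(z)/D(z)$ and $\cX_C(z)=z^2E''(z)/D(z)$ using \Cref{lem:expandS-1overDz}, with \Cref{lem:eps-propertieS}(b) reconciling the two formulas in part (a) and handling $\lambda=1\cup\mu$. The paper's own proof is only a brief sketch of exactly these steps, so your write-up is just a fuller version of the same argument.
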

\begin{proof} We use the generating functions in~\eqref{eqn:gf-pathS-cycleS}.
\begin{enumerate}[(a)]

    \item The first expression comes directly from the path generating function  $\cX_P(z)$  and the second expression also follows from~\Cref{lem:Ez-identitieS}. The equality of the two expressions and the case when  $\lambda = 1\cup \mu$ follow using~\Cref{lem:eps-propertieS}. 

    \item The formula for this coefficient comes directly from the cycle generating function $\cX_C(z)$. 
    \endproof
\end{enumerate}
\end{proof}
%


%

\subsection{Generating functions for twinned paths}
In this section, we focus on studying the various ways to twin a path. The following is a key result. 
\begin{lemma}\label{lem:KEY-epos-special-fractionS} 
For $k\ge 2$, the rational function $\dfrac{1-G_{\le k}(z)}{D(z)}$ and the function $\cX_P(z)(1-G_{\le k}(z))$ are $e$-positive.
\end{lemma}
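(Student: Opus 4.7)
The plan is to reduce both claims to a single key algebraic identity that separates out a manifestly $e$-positive remainder. From the definitions in~\eqref{eqn:KEY-epos-serieS}, we have $G(z)=G_{\le k}(z)+G_{\ge k+1}(z)$ and $D(z)=1-G(z)$, so
\[
1-G_{\le k}(z)=1-G(z)+G_{\ge k+1}(z)=D(z)+G_{\ge k+1}(z).
\]
This is the only trick; everything else is assembling known $e$-positive pieces.

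For the first assertion, I would divide by $D(z)$ to obtain
\[
\frac{1-G_{\le k}(z)}{D(z)}=1+\frac{G_{\ge k+1}(z)}{D(z)}.
\]
Now $G_{\ge k+1}(z)=\sum_{i\ge k+1}(i-1)e_iz^i$ is visibly $e$-positive, and by~\Cref{lem:expandS-1overDz} (or directly by the geometric expansion $1/D(z)=\sum_{i\ge 0}G(z)^i$ listed in~\eqref{eqn:KEY-epos-serieS}) the series $1/D(z)$ is $e$-positive. The product of two $e$-positive power series is $e$-positive, and adding the positive constant $1$ preserves this.

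For the second assertion, I would use $\cX_P(z)=E(z)/D(z)$ from~\eqref{eqn:gf-pathS-cycleS} together with the same identity above:
\[
\cX_P(z)\bigl(1-G_{\le k}(z)\bigr)=\frac{E(z)\bigl(D(z)+G_{\ge k+1}(z)\bigr)}{D(z)}=E(z)+\frac{E(z)\,G_{\ge k+1}(z)}{D(z)}.
\]
Here $E(z)=\sum_{i\ge 0}e_iz^i$ is $e$-positive, and the second summand is a product of three $e$-positive series $E(z)$, $G_{\ge k+1}(z)$, and $1/D(z)$, hence is also $e$-positive.

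There is no real obstacle: the whole argument rests on noticing that truncating $G(z)$ from above produces exactly the ``tail'' $G_{\ge k+1}(z)$ which combines with the denominator $D(z)=1-G(z)$ to restore manifest $e$-positivity. The only thing to be careful about is the bookkeeping of which truncation is which (that $1-G_{\le k}(z)=D(z)+G_{\ge k+1}(z)$ rather than some other rearrangement), after which both claims follow in one line each from standard closure properties of $e$-positive power series under sums, products, and the expansion $1/D(z)=\sum_{i\ge 0}G(z)^i$.
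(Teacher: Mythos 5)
Your proposal is correct and follows essentially the same route as the paper: the identity $1-G_{\le k}(z)=D(z)+G_{\ge k+1}(z)$ yields $1+G_{\ge k+1}(z)/D(z)$ for the first claim and $E(z)+\cX_P(z)\,G_{\ge k+1}(z)$ for the second, exactly as in the paper's proof. The closure properties of $e$-positive series you invoke are the same ones the paper uses implicitly, so there is nothing to add.
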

\begin{proof} 
For the rational function, we have that
\begin{equation}\label{eq:quotientGD}
\frac{1-G_{\le k}(z)}{D(z)}=\frac{1-G(z)+G_{\ge k+1}(z)}{1-G(z)}=1+\frac{G_{\ge k+1}(z)}{D(z)}.
\end{equation}

This is $e$-positive since $\displaystyle{G_{\ge k+1}(z)=\sum_{i\ge k+1} (i-1)e_iz^i}$ and $\dfrac{1}{D(z)}$ expands $e$-positively in powers of $G(z)$. 
The $e$-positivity of the second function follows from~\eqref{eqn:gf-pathS-cycleS} and the identity
\[\cX_P(z)\left(1-G_{\le k}(z)\right)= E(z) + \cX_P(z)\, G_{\ge k+1}(z). \qedhere\]
\end{proof}

\subsubsection{Paths twinned at a leaf} 
\phantom{}

The recurrence for the chromatic symmetric function of twinned paths at a leaf (i.e., a vertex of degree 1)  appears in Dahlberg and van Willigenburg~\cite[Equation 5]{DahlbergVanWilligenburg2018}, where the graph  $P_{n,v}$ with $v$ a leaf is called the \emph{lariat graph} $L_{n+3}$. Its chromatic symmetric function had been considered earlier by Wolfe in~\cite{Wolf_CSF_Path_Cycle}, and $e$-positivity was first established by Gebhard and Sagan in~\cite[Corollary 7.7]{gebhard_sagan_2001}.

\begin{prop}\label{prop:gf-twin-path-end-vertex}  Let $v$ be a leaf of the path $P_n$. The generating function for the chromatic symmetric function of the twin $P_{n,v}$ of a path on $n$ vertices satisfies the following identity:
\[2 +2 e_1z+\sum_{n\ge 1} X_{P_{n,v}} z^{n+1} =2(1-e_2z^2)  \cX_P(z).\]
\end{prop}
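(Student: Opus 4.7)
The plan is to apply the triple deletion identity in \Cref{leaf-twinS} to express $X_{P_{n,v}}$ as a simple linear combination of $X_{P_{n+1}}$ and $X_{P_{n-1}}$, and then sum over $n$ to obtain the generating function identity.

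First I would identify the relevant graphs in the setup of \Cref{leaf-twinS}. Setting $H' = P_n$ with $v$ its distinguished leaf and $u$ its unique neighbor, we have $H = P_n - \{v\} = P_{n-1}$ and $H'' = P_{n+1}$ (adding the new pendant vertex $w$ adjacent to $v$ extends the path by one). Then \Cref{leaf-twinS} immediately yields
\[
X_{P_{n,v}} = 2\bigl(X_{P_{n+1}} - e_2 X_{P_{n-1}}\bigr) \qquad (n \geq 2).
\]
I would separately check the boundary case $n=1$: since $v$ is isolated in $P_1$, twinning produces a single new edge, so $P_{1,v} = P_2$ and $X_{P_{1,v}} = 2e_2$, which agrees with the right-hand side $2(X_{P_2} - e_2 X_{P_0}) = 2(2e_2 - e_2)$.

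Having established the recurrence for all $n \geq 1$, the next step is to multiply by $z^{n+1}$ and sum. The sum splits as
\[
\sum_{n \geq 1} X_{P_{n,v}} z^{n+1} = 2\sum_{n \geq 1} X_{P_{n+1}} z^{n+1} - 2 e_2 z^2 \sum_{n \geq 1} X_{P_{n-1}} z^{n-1}.
\]
After re-indexing, the first sum equals $2\bigl(\mathcal{X}_P(z) - X_{P_0} - X_{P_1} z\bigr) = 2\mathcal{X}_P(z) - 2 - 2 e_1 z$, and the second sum equals $2 e_2 z^2 \mathcal{X}_P(z)$. Adding $2 + 2 e_1 z$ to both sides and factoring gives
\[
2 + 2 e_1 z + \sum_{n \geq 1} X_{P_{n,v}} z^{n+1} = 2 \mathcal{X}_P(z) - 2 e_2 z^2 \mathcal{X}_P(z) = 2(1 - e_2 z^2)\mathcal{X}_P(z),
\]
as claimed.

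There is no substantive obstacle here: the proof is essentially a bookkeeping exercise once \Cref{leaf-twinS} is applied correctly. The only subtle point is correctly identifying $H$, $H'$, and $H''$ (in particular, recognizing that $H'' = P_{n+1}$ rather than a forest) and handling the degenerate $n=1$ case by direct computation so that the recurrence is valid over the entire range of summation.
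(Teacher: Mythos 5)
Your proof is correct and follows essentially the same route as the paper: both rest on the recurrence $X_{P_{n,v}} = 2X_{P_{n+1}} - X_{P_2}X_{P_{n-1}}$ followed by the same generating-function summation. The only difference is that you derive the recurrence internally from \Cref{leaf-twinS} (with a separate check of the degenerate case $n=1$), whereas the paper cites it from Dahlberg and van Willigenburg; your version is slightly more self-contained but not substantively different.
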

\begin{proof}
By~\cite[Equation 5]{DahlbergVanWilligenburg2018}, 
 the chromatic symmetric function of $P_{n,v}$, with $n\ge 1$, is given by 
    \begin{equation}\label{eqn:twin-path-rec}
    X_{P_{n,v}} = 2 X_{P_{n+1}} - X_{P_{2}}X_{P_{n-1}}.
    \end{equation}
The proof now follows by using the generating function $\cX_P(z)$.
\end{proof}

Now we are ready to derive a  generating function for paths twinned at a leaf. Although the $e$-positivity was established in~\cite[Corollary 7.7]{gebhard_sagan_2001} and again in~\cite{DahlbergVanWilligenburg2018}, as mentioned earlier, our contribution here is to give the manifestly $e$-positive generating function below for $X_{P_{n,v}}$, using only symmetric functions, which enables a more efficient coefficient extraction.
\begin{prop}\label{prop:epos-via-gf-twin-pathS-leaf} 
Let $\cX_{P_v}(z)$ be the generating function for the twinned path at a leaf, that is, $\cX_{P_v} :=\sum_{n\ge 1} X_{P_{n,v}} z^{n+1}$. Then
\[\frac{1}{2} \cX_{P_v} (z)=K(z)\frac{G_{\ge 3}(z)}{D(z)} +e_1z G(z)\frac{G_{\ge 3}(z)}{D(z)} +e_2z^2 +\sum_{i\ge 3} i e_i z^i +e_1z G_{\ge 3}(z).
\]
In particular $X_{P_{n,v}}$ is $e$-positive, and the initial values are
\[X_{P_{1,v}}=2e_2, \quad X_{P_{2,v}}=2(3e_3), \quad X_{P_{3,v}}=2(4e_4+2e_1e_3), \quad X_{P_{4,v}}=2(4e_2e_3+3e_1e_4+5e_5).\]

An $e$-positive expression without denominators in terms of the path generating function $\cX_P$ is
\[\frac{1}{2} \cX_{P_v} (z)=\cX_P(z) G_{\ge 3}(z) +\sum_{i\ge 2} e_i z^i.
\]
\end{prop}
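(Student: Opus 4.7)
The plan is to begin from the recurrence in~\Cref{prop:gf-twin-path-end-vertex}, which rewrites as
\[\tfrac{1}{2}\cX_{P_v}(z) \;=\; (1-e_2z^2)\,\cX_P(z)\;-\;(1+e_1z),\]
and then derive both forms of the generating function using the symmetric-function identities established in~\Cref{sec:Sym-fn-identitieS}. The central observation is that $G_{\le 2}(z)=e_2 z^2$ by definition of $G_{\le k}$ in~\eqref{eqn:KEY-epos-serieS}, so the prefactor $1-e_2z^2$ is exactly the case $k=2$ of the expression handled by~\Cref{lem:KEY-epos-special-fractionS}.

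For the manifestly $e$-positive denominator-free identity, I would apply~\Cref{lem:KEY-epos-special-fractionS} with $k=2$ to get $(1-e_2 z^2)\cX_P(z)=E(z)+\cX_P(z)\,G_{\ge 3}(z)$, then subtract $1+e_1z$ and use the trivial identity $E(z)-1-e_1z=\sum_{i\ge 2} e_i z^i$. This immediately yields
\[\tfrac{1}{2}\cX_{P_v}(z)=\cX_P(z)\,G_{\ge 3}(z)+\sum_{i\ge 2}e_iz^i,\]
and $e$-positivity is visible because $\cX_P(z)$ is $e$-positive by~\Cref{thm:RPS-gf-paths-cycles} and $G_{\ge 3}(z)$ has nonnegative elementary coefficients.

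For the rational expression, I would plug the decomposition from~\Cref{cor:new-gf-PATHSS},
\[\cX_P(z)=\tfrac{K(z)}{D(z)}+e_1z\,\tfrac{G(z)}{D(z)}+(1+e_1z),\]
into $\cX_P(z)\,G_{\ge 3}(z)$, producing the two rational summands named in the statement along with a polynomial remainder $(1+e_1z)\,G_{\ge 3}(z)$. Combining this with the leftover $\sum_{i\ge 2}e_iz^i$ from the previous paragraph and using $(i-1)+1=i$ for $i\ge 3$, the polynomial part collapses to exactly $e_2z^2+\sum_{i\ge 3}ie_iz^i+e_1z\,G_{\ge 3}(z)$, matching the stated formula. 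Each summand is $e$-positive since $1/D(z)=\sum_{i\ge 0}G(z)^i$ expands $e$-positively by~\eqref{eqn:KEY-epos-serieS} and the remaining factors are $e$-positive polynomials.

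The initial values for $n=1,2$ reduce to $X_{K_2}=2e_2$ and $X_{K_3}=6e_3$, since twinning a leaf of $P_1$ yields $K_2$ and twinning a leaf of $P_2$ yields $K_3$. For $n=3,4$, one either extracts coefficients of $z^{n+1}$ directly from the denominator-free identity above using~\Cref{prop:RECURSION-pathS-cycleS}, or applies~\Cref{leaf-twinS} (which specializes here to $X_{P_{n,v}}=2(X_{P_{n+2}}-e_2\,X_{P_n})$) together with the small values of $X_{P_n}$ from~\Cref{prop:RECURSION-pathS-cycleS}. The whole argument is essentially bookkeeping on top of~\Cref{prop:gf-twin-path-end-vertex},~\Cref{lem:KEY-epos-special-fractionS}, and~\Cref{cor:new-gf-PATHSS}; the only step requiring attention is the polynomial simplification in the third paragraph, where one must check that the three tail summands assemble precisely as written.
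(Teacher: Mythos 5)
Your argument is correct and uses exactly the same ingredients as the paper's proof (the recurrence from~\Cref{prop:gf-twin-path-end-vertex}, the identity $(1-e_2z^2)\cX_P = E + \cX_P G_{\ge 3}$ from~\Cref{lem:KEY-epos-special-fractionS}, and the decomposition in~\Cref{cor:new-gf-PATHSS}); the only difference is that you derive the denominator-free form first and then the rational one, whereas the paper goes in the opposite order, and your polynomial bookkeeping checks out. One small slip in the final paragraph: \Cref{leaf-twinS} specializes to $X_{P_{n,v}}=2\bigl(X_{P_{n+1}}-e_2 X_{P_{n-1}}\bigr)$, not $2\bigl(X_{P_{n+2}}-e_2 X_{P_n}\bigr)$; with your indexing the stated initial values would come out shifted by one.
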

\begin{proof} 
    By~\Cref{lem:KEY-epos-special-fractionS} and ~\Cref{cor:new-gf-PATHSS}, we have  that
\begin{align*}
    \frac{1}{2} \cX_{P_v}(z) &= (1-e_2z^2 ) \cX_P -(1+e_1z)
    =K(z) \frac{1-e_2z^2 }{D(z)} 
        +e_1z G(z)\frac{1-e_2z^2 }{D(z)} -e_2z^2(1+e_1z)\\
    &=(K(z)+e_1zG(z))\frac{G_{\ge 3}(z)}{D(z)} + (K(z)+e_1zG(z)) -e_2z^2(1+e_1z)\\
    &=(K(z)+e_1zG(z))\frac{G_{\ge 3}(z)}{D(z)} +(K(z)-e_2z^2 ) +e_1z (G(z)-e_2z^2 ).
\end{align*}
Since $K(z)-e_2z^2=e_2z^2  +\sum_{i\ge 3} i e_i z^i$ and $G(z)-e_2z^2 =\sum_{i\ge 3} (i-1) e_i z^i$, the result follows.

The second expression is obtained from the first by rewriting the formula in \Cref{cor:new-gf-PATHSS} as follows:
\[\cX_P(z)= \frac{K(z)+e_1z G(z)}{D(z)} +(1+e_1z). \qedhere\]
\end{proof}

\begin{cor}\label{cor:e-coefficents-of-twinned-path}
Let $c_\lambda$ be the coefficient of $e_\lambda z^{|\lambda|} $ in $\mathcal{X}_{P_{v}}$, that is $\mathcal{X}_{P_{v}}=\sum c_\lambda e_{\lambda} z^{|\lambda|} $, where $v$ is a leaf of the path $P_n$.
The following is a list of closed formulas for all the coefficients $c_\lambda$ involved in the general expression of $\cX_{P_v}(z)$:
\begin{enumerate}[(a)]
\itemsep0.1in
    \item $c_{(k)}= 2k$, $k\ge 3$, and $c_{(2)}=2$;
    \item $c_{(k-1,1)}=2(k-2)$, $k\ge 4 $;
    \item $c_{(k-2,2)}=4(k-3)$, $k\ge 5$; 
    \item $c_{(i,j)}=2i(j-1)+2j(i-1)=2(2ij-i-j)$, $i>j\ge 3$;
    \item $c_{(i,i)}=2i(i-1),$ $i\ge 3$.
    \item $c_{(3, 2^{k})}=8$ and $c_{(3, 2^{k},1)}=4$, $k\ge 2$. %
    \item If $c_{1\cup\mu}\ne 0$ and $\ell(\mu)\ge 2$, then $1 \notin \supp(\mu)$ and there exists $a\geq 3$ such that $a\in \supp(\mu)$. In particular  $c_{(2^{k},1)}=0$.  The coefficient of $e_{1\cup\mu}$ is equal to twice the coefficient of $e_\mu$ in 
    $G(z)G_{\ge 3}(z)\, G(z)^{\ell(\mu)-2}$, 
    and it equals 
    \[2\sum_{\substack{(a,b)\\a,b\in\supp(\mu)\\ a\ge 2, b\ge 3}} (a-1)(b-1) \eps((\mu-a)-b) .\]
    \item Assume $1\notin \supp(\lambda)$ and $\ell(\lambda)\ge 2$.  If $c_\lambda\ne 0$, then $\lambda$ contains at least one part of size at least 3.
    In particular  $c_{(2^{k})}=0$. 
    The coefficient $c_\lambda$ is equal to  twice the coefficient of $e_{\lambda}$ in $K(z)G_{\ge 3}(z)\,G(z)^{\ell(\lambda)-2}$, and it equals
    \[2\sum_{\substack{(a,b)\\a,b\in\supp(\lambda)\\ a\ge 2, b\ge 3}} a(b-1) \eps((\lambda-a)-b).\]
\end{enumerate}
\end{cor}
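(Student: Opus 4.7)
\medskip

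The plan is to read off each coefficient from the explicit decomposition of $\cX_{P_v}(z)$ given in \Cref{prop:epos-via-gf-twin-pathS-leaf}, namely
\[
\tfrac{1}{2}\cX_{P_v}(z)= \underbrace{K(z)\tfrac{G_{\ge 3}(z)}{D(z)}}_{(I)} + \underbrace{e_1 z\, G(z)\tfrac{G_{\ge 3}(z)}{D(z)}}_{(II)} + \underbrace{e_2 z^2+\sum_{i\ge 3} i e_i z^i}_{(III)} + \underbrace{e_1 z\, G_{\ge 3}(z)}_{(IV)},
\]
and then to identify which of the four summands $(I)$--$(IV)$ can contribute to a given monomial $e_\lambda z^{|\lambda|}$ based on the shape of $\lambda$. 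The key structural observations are: $(III)$ contributes only to single-part partitions; $(IV)$ contributes only to partitions of the form $(i,1)$ with $i\ge 3$; $(I)$ involves neither $e_1$ nor any $e_j$ with $j=1$, so it contributes only when $1\notin\supp(\lambda)$ and $\ell(\lambda)\ge 2$; $(II)$ contributes only when $1\in\supp(\lambda)$ and $\ell(\lambda)\ge 3$. Combined with the fact that $G_{\ge 3}(z)$ forces at least one part $\ge 3$, these restrictions immediately give the vanishing statements $c_{(2^k,1)}=0$ and $c_{(2^k)}=0$ in parts (g) and (h).

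For the single-part case (a), only $(III)$ contributes: the coefficient of $e_k z^k$ is $k$ for $k\ge 3$ and $1$ for $k=2$, and multiplying by $2$ gives $c_{(k)}=2k$ and $c_{(2)}=2$. For (b), only $(IV)$ contributes to $c_{(k-1,1)}$, yielding $2(k-2)$. For (c), (d), (e), the shape $(i,j)$ with $i\ge j\ge 2$ and no $1$ forces only $(I)$ to contribute, and since $\ell=2$ the factor $1/D(z)$ only contributes its constant term $1$; the coefficient is then extracted from $K(z)G_{\ge 3}(z)$ by choosing which factor supplies which part (remembering $G_{\ge 3}$ has no $e_2$), and this gives $4(k-3)$, $2(2ij-i-j)$, and $2i(i-1)$ respectively.

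The harder cases (f), (g), (h) all require the general coefficient extraction formula, and here the key is \Cref{lem:expandS-1overDz}: the coefficient of $e_\nu z^{|\nu|}$ in $G(z)^{\ell(\nu)}$ is exactly $\eps(\nu)$, so the coefficient of $e_\nu z^{|\nu|}$ in $1/D(z)$ also equals $\eps(\nu)$ (only the term $G(z)^{\ell(\nu)}$ can produce a partition of length $\ell(\nu)$). For (h), suppose $1\notin\supp(\lambda)$ and $\ell(\lambda)\ge 2$; then only $(I)$ contributes, and expanding $K(z)G_{\ge 3}(z)\cdot\frac{1}{D(z)}$, we pick a part $a\in\supp(\lambda)$ from $K(z)$ (contributing $a$) and a part $b\in\supp(\lambda-a)$ with $b\ge 3$ from $G_{\ge 3}(z)$ (contributing $b-1$); the leftover partition $(\lambda-a)-b$ has length $\ell(\lambda)-2$, so it must come entirely from $G(z)^{\ell(\lambda)-2}$, contributing $\eps((\lambda-a)-b)$. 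Summing and doubling gives the stated formula. For (g), the same argument applied to $(II)$ replaces $K(z)$ by $G(z)$, changing $a$ to $a-1$ and producing the formula with $(a-1)(b-1)$. Finally (f) follows by direct application of the formulas in (g) and (h) to $\lambda=(3,2^k)$ and $\lambda=(3,2^k,1)$, for which the only valid choice is $a=2$, $b=3$ (resp. $a\in\{2\}$, $b=3$ with the $1$ absorbed by $(II)$), yielding $\eps((2^{k-1}))=1$ and thus $c_{(3,2^k)}=2\cdot 2\cdot 2=8$ and $c_{(3,2^k,1)}=2\cdot 1\cdot 2=4$.

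The main obstacle is not computational but organizational: one must carefully verify that no other summand among $(I)$--$(IV)$ contributes in each case, which amounts to tracking the effect of the $G_{\ge 3}$ factor (forcing a part $\ge 3$), the $e_1$ factors in $(II)$ and $(IV)$ (forcing a $1$), and the length restriction coming from $\ell(\lambda)-\#\{\text{non-}G\text{ factors}\}$ determining which single power $G(z)^m$ in the expansion of $1/D(z)$ is relevant. Once this bookkeeping is set up, each claim reduces to a short application of \Cref{lem:expandS-1overDz}.
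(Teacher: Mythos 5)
Your proposal is correct and follows exactly the route the paper intends: the corollary is stated as a direct coefficient extraction from the $e$-positive decomposition of $\tfrac12\cX_{P_v}(z)$ in \Cref{prop:epos-via-gf-twin-pathS-leaf}, using \Cref{lem:expandS-1overDz} to expand $1/D(z)$, and the paper's only added remark is that cases (c)--(f) are special cases of (g) and (h), which your bookkeeping of the summands $(I)$--$(IV)$ reproduces. No gaps.
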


Note that cases \textit{(c)-(f)} are particular cases of \textit{(g)} and \textit{(h)}. 

\subsubsection{Paths twinned at both leaves}
\phantom{}

In this section, we consider the twinned path $P_{n,w,v}$ at both leaves, which we label with $w$ and $v$. The $e$-positivity of its chromatic symmetric function is a consequence of ~\cite[Corollary 7.7]{gebhard_sagan_2001}, whose proof relies on the theory of symmetric functions in noncommutating variables.  Here we derive an $e$-positive generating function using only symmetric function identities.

Unlike the other families of graphs, here one needs to pay special attention to the smaller values of $n$. 
We consider the special case of the path on two vertices first.
Twinning both vertices produces 
the twin of the cycle graph $C_3$ at one vertex, 
which is also the complete graph $K_4$, as shown in~\Cref{fig: special cases}, and therefore  
we have the following.
\begin{lemma}\label{lem:P2-double-twinS} 
For the path $P_2$ twinned at both vertices, 
$X_{P_{2,v,w}}=X_{C_{3,v}}=24 e_4.$
\end{lemma}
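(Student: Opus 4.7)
The plan is a direct identification: I would show that both $P_{2,v,w}$ and $C_{3,v}$ coincide with the complete graph $K_4$, and then invoke the well-known formula $X_{K_n} = n!\, e_n$ to conclude.

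First, I would track the twinning of $P_2$ step by step. The path $P_2$ has vertices $\{v,w\}$ and the single edge $vw$. Twinning at $v$ adds a vertex $v'$ adjacent to $v$ and to the unique neighbor $w$ of $v$; the resulting graph on $\{v,v',w\}$ has all three edges, so it is $K_3 = C_3$. Now twinning this at $w$ adds $w'$ adjacent to $w$ and to both neighbors of $w$, namely $v$ and $v'$. The resulting graph on $\{v,v',w,w'\}$ has every possible edge, hence it is $K_4$. (Indeed, the paper already notes that $(K_n)_v = K_{n+1}$ for any vertex $v$, so one could also observe $P_{2,v} = K_3$ and then apply this remark once more.) For the second identification, twinning $C_3$ at any vertex $v$ adds $v'$ adjacent to $v$ and to both other vertices of the triangle, again producing $K_4$.

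For the chromatic symmetric function value, I would use the fact that a proper coloring of $K_n$ is exactly an injective map $\kappa : V(K_n) \to \mathbb{Z}_{>0}$, so
\[
X_{K_n}(\mathbf{x}) \;=\; \sum_{\kappa \text{ injective}} \prod_{v \in V} x_{\kappa(v)} \;=\; n! \sum_{i_1 < \cdots < i_n} x_{i_1}\cdots x_{i_n} \;=\; n!\, e_n,
\]
which for $n=4$ gives $X_{K_4} = 24\, e_4$. This completes the identification $X_{P_{2,v,w}} = X_{C_{3,v}} = 24\, e_4$.

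There is essentially no obstacle here; the lemma amounts to a graph-isomorphism check combined with the standard formula for $X_{K_n}$. The only thing to be careful about is bookkeeping the adjacencies after each twinning, ensuring one does not miss any newly created edges — but with only two vertices in $P_2$, this is immediate.
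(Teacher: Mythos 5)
Your proposal is correct and matches the paper's argument: the paper likewise identifies $P_{2,v,w}$ and $C_{3,v}$ with $K_4$ (illustrated in its Figure of special cases) and concludes via $X_{K_4}=4!\,e_4=24e_4$. Your step-by-step tracking of the adjacencies is just a slightly more explicit version of the same graph-isomorphism check.
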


\begin{figure}[ht]
    \centering
\begin{tikzpicture}
\draw(1,0) -- (-1,0) -- (0,1) -- (1,0);
\node(w) at (1,0){$\bullet$};
\node(v) at (-1,0){$\bullet$};
\node(v') at (0,1){$\bullet$};
\node[left] at (v){$v$};
\node[right] at (w){$w$};
\node[above] at (v'){$v'$};
\node at (0,-2.25) {$P_{2,v}$};
\draw(5,0) -- (4,1) -- (3,0) -- (4,-1) -- (5,0);
\draw(5,0) -- (3,0);
\draw(4,1) -- (4,-1);
\node(w1) at (5,0){$\bullet$};
\node(v1) at (3,0){$\bullet$};
\node(v1') at (4,1){$\bullet$};
\node(w1') at (4,-1){$\bullet$};
\node[left] at (v1){$v$};
\node[right] at (w1){$w$};
\node[above] at (v1'){$v'$};
\node[below] at (w1'){$w'$};
\node at (4,-2.25) {$P_{2,v,w}$};
\draw(9,0) -- (8,1) -- (7,0) -- (8,-1) -- (9,0);
\draw(9,0) -- (7,0);
\draw(8,1) -- (8,-1);
\node(w2) at (9,0){$\bullet$};
\node(v2) at (7,0){$\bullet$};
\node(v2') at (8,1){$\bullet$};
\node(u) at (8,-1){$\bullet$};
\node[left] at (v2){$v$};
\node[right] at (w2){$w$};
\node[above] at (v2'){$v'$};
\node[below] at (u){$u$};
\node at (8,-2.25) {$C_{3,v}$};
\end{tikzpicture}
    \caption{}
    \label{fig: special cases}
\end{figure}

For the general case, we start with a consequence of the triple deletion argument. 

\begin{cor}\label{prop:Twinned-both-leaves-pathS}
Let $v,w$ be the two leaves of the path $P_n$, and let $P_{n,v,w}$ be the path twinned at both leaves. Then, for $n\geq 3$, 
\begin{equation}
    X_{P_{n,v,w}} =2 X_{P_{n+1,v}}-2 e_2 X_{P_{n-1,v}}=4(X_{P_{n+2}}-2e_2X_{P_n}+e_2^2 X_{P_{n-2}}).
\end{equation}
\end{cor}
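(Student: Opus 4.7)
The plan is to derive the first equality as a direct application of Corollary~\ref{leaf-twinS} and then obtain the second equality by substituting the lariat formula of Proposition~\ref{prop:gf-twin-path-end-vertex} twice.

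For the first equality, I would set $H = P_{n-1,v}$ and let $u$ be the leaf of $H$ at the end opposite from $v$. This vertex $u$ is indeed a leaf of $H$ because, for $n\ge 3$, twinning at $v$ affects only a neighborhood of $v$ (at distance at most $1$) and leaves the far endpoint of the underlying path untouched. Following the construction in Corollary~\ref{leaf-twinS}, the graph $H'$ obtained by attaching a new vertex (which we rename $w$) to $u$ is exactly $P_{n,v}$; the graph $H''$ obtained by then appending a pendant vertex to $w$ is exactly $P_{n+1,v}$; and the graph $H'_w$ obtained by twinning $H'$ at the newly added vertex $w$ coincides with $P_{n,v,w}$, since the two twinning operations act on the disjoint neighborhoods of $v$ and $w$ (which are at distance $n-1\ge 2$) and hence commute as graph operations. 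Substituting these identifications into Corollary~\ref{leaf-twinS} immediately yields
\[
X_{P_{n,v,w}} \;=\; 2\bigl(X_{P_{n+1,v}} - e_2\, X_{P_{n-1,v}}\bigr),
\]
which is the first claimed identity.

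For the second equality, I would invoke the recurrence of Proposition~\ref{prop:gf-twin-path-end-vertex}, namely $X_{P_{m,v}} = 2X_{P_{m+1}} - 2 e_2\, X_{P_{m-1}}$ (using $X_{P_2}=2e_2$), with $m=n+1$ and $m=n-1$ (both valid for $n\ge 3$, interpreting $X_{P_0}=1$ if needed). Plugging the two resulting expressions into the first equality and collecting terms gives
\[
X_{P_{n,v,w}} \;=\; 4 X_{P_{n+2}} - 8\, e_2\, X_{P_n} + 4\, e_2^{\,2}\, X_{P_{n-2}} \;=\; 4\bigl(X_{P_{n+2}} - 2e_2\, X_{P_n} + e_2^{\,2}\, X_{P_{n-2}}\bigr),
\]
as desired.

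There is no substantive obstacle here, since both ingredients (the leaf twin formula of Corollary~\ref{leaf-twinS} and the lariat recurrence of Proposition~\ref{prop:gf-twin-path-end-vertex}) are already available. The only mild subtlety is the combinatorial bookkeeping needed to identify $H'$, $H''$, and $H'_w$ with the specific graphs $P_{n,v}$, $P_{n+1,v}$, and $P_{n,v,w}$; this is precisely where the hypothesis $n\ge 3$ enters, ensuring the two twinned leaves and their neighborhoods do not interact.
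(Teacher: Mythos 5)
Your proof is correct and follows essentially the same route as the paper: the first equality is the triple deletion argument packaged as Corollary~\ref{leaf-twinS}, and the second is obtained by substituting the lariat recurrence $X_{P_{m,v}}=2X_{P_{m+1}}-2e_2X_{P_{m-1}}$ into both twinned terms, exactly as the paper does when it reuses this identity later. One small inaccuracy in your bookkeeping: for $n=3$ the vertex $u$ is \emph{not} a leaf of $H=P_{2,v}$ (it lies in the neighborhood of $v$ and acquires an edge to $v'$), and the neighborhoods of $v$ and $w$ in $P_3$ coincide rather than being disjoint; neither fact is actually needed, since Corollary~\ref{leaf-twinS} places no constraint on $u$, the identification $H'\cong P_{n,v}$ only requires that the new pendant vertex avoid the triangle at $v$, and the two twinnings commute whenever $v$ and $w$ are distinct and non-adjacent, which holds for all $n\ge 3$.
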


This  relation allows us to give the following generating function identity.
\begin{prop} For the graph $P_{n,v,w}$
, we have 
\begin{equation}\label{eqn:gf-double-twinS}
\frac{1}{4}\sum_{n\ge 3} X_{P_{n,v,w}} z^{n+2}
=(1-e_2z^2)\frac{1}{2}\cX_{P_{v}}+\frac{1}{2}\alpha,
\end{equation}
where $\alpha=2e_2^2 z^4-(8 e_4 z^4 +4e_3e_1z^4 +6 e_3 z^3  +2e_2z^2)$.
\end{prop}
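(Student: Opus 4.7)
The plan is to derive the generating function identity directly from the recurrence established in the preceding~\Cref{prop:Twinned-both-leaves-pathS}, which states
\[X_{P_{n,v,w}}=2 X_{P_{n+1,v}}-2e_2 X_{P_{n-1,v}} \qquad \text{for } n\ge 3.\]
Multiplying through by $z^{n+2}$ and summing over $n\ge 3$ immediately transforms this recurrence into a generating function identity whose right-hand side is, up to boundary corrections, equal to $(1-e_2z^2)\cX_{P_v}(z)/2$.

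\medskip

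More concretely, I would first reindex the two sums on the right. Setting $m=n+1$ in the first sum gives
\[\sum_{n\ge 3} 2 X_{P_{n+1,v}}z^{n+2} = 2\sum_{m\ge 4} X_{P_{m,v}}z^{m+1} = 2\cX_{P_v}(z)-2\bigl(X_{P_{1,v}}z^2+X_{P_{2,v}}z^3+X_{P_{3,v}}z^4\bigr),\]
while setting $m=n-1$ in the second sum gives
\[\sum_{n\ge 3} 2e_2 X_{P_{n-1,v}}z^{n+2} = 2e_2z^2\sum_{m\ge 2} X_{P_{m,v}}z^{m+1} = 2e_2z^2\bigl(\cX_{P_v}(z)-X_{P_{1,v}}z^2\bigr).\]
Subtracting and dividing by $4$ yields
\[\tfrac{1}{4}\sum_{n\ge 3} X_{P_{n,v,w}}z^{n+2} = (1-e_2z^2)\tfrac{1}{2}\cX_{P_v}(z) -\tfrac{1}{2}\bigl(X_{P_{1,v}}z^2+X_{P_{2,v}}z^3+X_{P_{3,v}}z^4 - e_2z^4\,X_{P_{1,v}}\bigr).\]

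\medskip

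The remaining step is to identify the bracketed boundary term with $-\tfrac12\alpha$. For this I would substitute the initial values
\[X_{P_{1,v}}=2e_2,\qquad X_{P_{2,v}}=6e_3,\qquad X_{P_{3,v}}=8e_4+4e_1 e_3\]
recorded in~\Cref{prop:epos-via-gf-twin-pathS-leaf}, obtaining
\[X_{P_{1,v}}z^2+X_{P_{2,v}}z^3+X_{P_{3,v}}z^4 - e_2z^4 X_{P_{1,v}} = 2e_2 z^2 + 6e_3 z^3 + (8e_4+4e_1e_3)z^4 - 2e_2^2 z^4,\]
which is exactly $-\alpha$. This completes the verification.

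\medskip

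There is no substantive obstacle here: the argument is a routine manipulation of formal power series built on top of~\Cref{prop:Twinned-both-leaves-pathS}. The only point that requires care is bookkeeping the low-order terms correctly, since the recurrence starts at $n=3$ rather than $n=1$, so three initial values of $X_{P_{n,v}}$ enter the correction term and one term is contributed by the finite tail of the $e_2 z^2 \cX_{P_v}(z)$ piece. Matching this correction with $\alpha$ is the only place where one must be careful.
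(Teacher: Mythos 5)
Your proposal is correct and follows essentially the same route as the paper: multiply the recurrence from~\Cref{prop:Twinned-both-leaves-pathS} by $z^{n+2}$, sum over $n\ge 3$, reindex to express everything in terms of $\cX_{P_v}$, and match the boundary terms against $\alpha$ using the initial values from~\Cref{prop:epos-via-gf-twin-pathS-leaf}. The only blemish is the phrase ``identify the bracketed boundary term with $-\tfrac12\alpha$'' (the bracket itself equals $-\alpha$, so the full correction is $+\tfrac12\alpha$), but your displayed computation gets this right.
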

\begin{proof}
    Multiply both sides of the first equality in~\Cref{prop:Twinned-both-leaves-pathS} %
    by $z^{n+2}$ and sum for $n\geq 3$:
    \begin{align*}
        \sum_{n\geq 3}X_{P_{n,v,w}}z^{n+2} &=2\sum_{n\geq 3} X_{P_{n+1,v}}z^{n+2}-2 e_2 \sum_{n\geq 3}X_{P_{n-1,v}}z^{n+2}\\
        &=2\sum_{n\geq 4} X_{P_{n,v}}z^{n+1}-2 e_2 z^2\sum_{n\geq 2}X_{P_{n,v}}z^{n+1}\\
        &=2(1-e_2z^2)\cX_{P_v} - 2(X_{P_{3,v}}z^4+X_{P_{2,v}}z^3+X_{P_{1,v}}z^2-2e_2z^2 X_{P_{1,v}}z^2)\\
        &=2(1-e_2z^2)\cX_{P_v} - 2[(8e_4+4e_3e_1)z^4+6e_3z^3+2e_2z^2-2e_2^2z^4]
    \end{align*}
    where the computations for $X_{P_{n,v}}$ follow from~\Cref{prop:epos-via-gf-twin-pathS-leaf}.
\end{proof}

The next theorem follows from~\Cref{prop:Twinned-both-leaves-pathS} and manipulation of the formal series.%
\begin{theorem}\label{thm:gf-double-twinS-deg-ge7-len3}
    The generating function $\displaystyle{\frac{1}{4}\sum_{n\ge 3} X_{P_{n,v,w}} z^{n+2}}$ has the following $e$-positive expansion:
\begin{align*}
   \frac{1}{4}\sum_{n\ge 3} X_{P_{n,v,w}}z^{n+2} 
   &=\left(K(z)+e_1zG(z)   \right)\frac{G_{\ge3}(z)^2}{D(z)}+e_1z G_{\ge3}(z)^2 \\ 
   &\quad+\left(G_{\ge3}(z)\sum_{i\ge 3} ie_iz^i + e_1z \sum_{i\ge 4}(i-1) e_i z^i+e_2z^2 \sum_{i\ge 3}(i-2) e_i z^i\right) +\sum_{i\ge 5}i e_i z^i \nonumber.
\end{align*}

An $e$-positive expression without denominators in terms of the path generating function $\cX_P$ is 
\[\cX_P(z) G_{\ge 3}^2(z) + K_{\ge 5}(z) 
+ G_{\ge 3}(z) \sum_{i\ge 3}e_iz^i + e_1 z G_{\ge 4}(z) +e_2 z^2 \sum_{i\ge 3} (i-2) e_i z^i.
\]

\end{theorem}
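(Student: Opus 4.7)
\medskip
\noindent\textbf{Proof proposal.} The plan is to use the immediately preceding proposition, which expresses
\[
\frac{1}{4}\sum_{n\ge 3} X_{P_{n,v,w}} z^{n+2} = (1-e_2z^2)\cdot\tfrac{1}{2}\cX_{P_{v}}(z) + \tfrac{1}{2}\alpha,
\]
together with the explicit $e$-positive formulas for $\cX_{P_v}(z)$ given in \Cref{prop:epos-via-gf-twin-pathS-leaf}. The task is then entirely a symmetric function calculation: substitute one of the two formulas for $\tfrac{1}{2}\cX_{P_v}$, distribute the factor $(1-e_2z^2)$, and show that the low-degree error term $\tfrac{1}{2}\alpha$ absorbs exactly those summands that would otherwise contain a minus sign.

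First I would derive the second, denominator-free form. Using the identity $\tfrac{1}{2}\cX_{P_v}(z)=\cX_P(z)\,G_{\ge 3}(z)+\sum_{i\ge 2}e_iz^i$, the product becomes
\[
(1-e_2z^2)\cX_P(z)\,G_{\ge 3}(z) + (1-e_2z^2)\sum_{i\ge 2}e_iz^i.
\]
The key structural step is to invoke \Cref{lem:KEY-epos-special-fractionS} with $k=2$, which gives $(1-e_2z^2)\cX_P(z)=E(z)+\cX_P(z)\,G_{\ge 3}(z)$. Multiplying by $G_{\ge 3}(z)$ immediately produces the leading term $\cX_P(z)\,G_{\ge 3}(z)^2$ of the target, plus the $e$-positive piece $E(z)\,G_{\ge 3}(z)$. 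The remaining task is to verify that
\[
E(z)\,G_{\ge 3}(z) + (1-e_2z^2)\sum_{i\ge 2}e_iz^i + \tfrac{1}{2}\alpha
\;=\; K_{\ge 5}(z)+G_{\ge 3}(z)\!\sum_{i\ge 3}\!e_iz^i + e_1z\,G_{\ge 4}(z)+e_2z^2\!\sum_{i\ge 3}\!(i-2)e_iz^i.
\]
This is checked by splitting $E(z)=1+e_1z+\sum_{j\ge 2}e_jz^j$, using $G_{\ge 3}(z)+\sum_{i\ge 3}e_iz^i=K_{\ge 3}(z)$, and observing that $e_2z^2G_{\ge 3}(z)-e_2z^2\sum_{i\ge 3}e_iz^i=e_2z^2\sum_{i\ge 3}(i-2)e_iz^i$. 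What remains is a handful of degree-$3$ and degree-$4$ monomials (namely $3e_3z^3$, $4e_4z^4$, $2e_1e_3z^4$, and $\pm e_2^2z^4$, $\pm e_2z^2$), and a direct match of coefficients confirms these are exactly cancelled by $\tfrac{1}{2}\alpha=e_2^2z^4-4e_4z^4-2e_1e_3z^4-3e_3z^3-e_2z^2$.

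For the first, rational form, I would start instead from the expression
\[
\tfrac{1}{2}\cX_{P_v}(z)=\bigl(K(z)+e_1zG(z)\bigr)\tfrac{G_{\ge 3}(z)}{D(z)}+\bigl(K(z)-e_2z^2\bigr)+e_1zG_{\ge 3}(z),
\]
obtained by collecting terms in the first identity of \Cref{prop:epos-via-gf-twin-pathS-leaf}. Multiplying by $(1-e_2z^2)$, the rational piece becomes $(K(z)+e_1zG(z))\cdot\tfrac{(1-e_2z^2)G_{\ge 3}(z)}{D(z)}$, and by \Cref{lem:KEY-epos-special-fractionS} at $k=2$ we have $\tfrac{1-e_2z^2}{D(z)}=1+\tfrac{G_{\ge 3}(z)}{D(z)}$. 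This produces the asserted leading term $(K(z)+e_1zG(z))\tfrac{G_{\ge 3}(z)^2}{D(z)}$ together with the polynomial $(K(z)+e_1zG(z))G_{\ge 3}(z)$; the latter, combined with the polynomial portion of $(1-e_2z^2)\cdot\tfrac{1}{2}\cX_{P_v}$ and $\tfrac{1}{2}\alpha$, must be matched against the remaining summands $e_1zG_{\ge 3}(z)^2+G_{\ge 3}(z)\sum_{i\ge 3}ie_iz^i+e_1z\sum_{i\ge 4}(i-1)e_iz^i+e_2z^2\sum_{i\ge 3}(i-2)e_iz^i+\sum_{i\ge 5}ie_iz^i$.

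The main obstacle is neither conceptual nor technical but purely bookkeeping: one must track where each monomial of degree at most $4$ is produced, since the polynomial $\alpha$ was precisely designed to absorb the leading low-degree discrepancy arising from the initial conditions $X_{P_{1,v}},X_{P_{2,v}},X_{P_{3,v}}$. To keep this organized I would separate the calculation by total degree of $z$, confirming the degree $\le 4$ coefficients by hand using the initial values listed in \Cref{prop:epos-via-gf-twin-pathS-leaf}, and then verifying the generic degree identity for $n\ge 5$ uniformly via the three rewrites $G_{\ge 3}+\sum_{i\ge 3}e_iz^i=K_{\ge 3}$, $K_{\ge 3}-3e_3z^3-4e_4z^4=K_{\ge 5}$, and $e_1zG_{\ge 3}-2e_1e_3z^4=e_1zG_{\ge 4}$.
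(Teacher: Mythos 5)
Your proposal is correct and follows essentially the same route as the paper: both start from the identity $\frac14\sum_{n\ge 3}X_{P_{n,v,w}}z^{n+2}=(1-e_2z^2)\cdot\frac12\cX_{P_v}+\frac12\alpha$, substitute the formulas of \Cref{prop:epos-via-gf-twin-pathS-leaf}, and use $\frac{1-e_2z^2}{D(z)}=1+\frac{G_{\ge 3}(z)}{D(z)}$ from \Cref{lem:KEY-epos-special-fractionS} before matching the low-degree terms against $\frac12\alpha$. The only difference is cosmetic — you derive the denominator-free form first and in full (your cancellation bookkeeping checks out), whereas the paper derives the rational form first and deduces the other — so no substantive divergence.
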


\begin{proof}
We use the generating function in~\Cref{prop:epos-via-gf-twin-pathS-leaf} to expand $\frac{1}{2} (1-e_2z^2) \cX_{P_{n,v}}$ as 
\begin{align*} 
\frac{1}{2} (1-e_2z^2) \cX_{P_{v}}
=&(1-e_2z^2) \left(K(z)+z e_1 G(z)\right) \frac{G_{\ge 3}(z)}{D(z)} \\
&\quad+(1-e_2z^2)\left(e_2z^2 +\sum_{i\ge 3} i e_i z^i +z e_1G_{\ge 3}(z)\right).
\end{align*}

By~\eqref{eq:quotientGD}, $\dfrac{1-e_2z^2}{D(z)} = 1+ \dfrac{G_{\geq 3}(z)}{D(z)}$, and we can rewrite the above expression as
\begin{align*}
\frac{1}{2} (1-e_2z^2) \cX_{P_{v}}
&= \left(K(z)+z e_1 G(z)\right)  G_{\ge 3}(z) \left(1+ \dfrac{G_{\geq 3}(z)}{D(z)}\right)\\ 
&\quad+(1-e_2z^2)\left(e_2z^2 +\sum_{i\ge 3} i e_i z^i +z e_1G_{\ge 3}(z)\right).
\end{align*}

Next, we arrange the expression above so that the term $-\frac{1}{2}\alpha$ appears:
\begin{align*}
\frac{1}{2} (1-e_2z^2) \cX_{P_{v}} 
&=\left(K(z)+z e_1 G(z)\right)\frac{G_{\ge 3}(z)^2}{D(z)}
-\frac{1}{2}\alpha +\sum_{i\ge 5} ie_i z^i+e_1z \sum_{i\ge 4} (i-1) e_i z^i\\
&\quad+\left(K(z)+z e_1 G(z)\right)G_{\ge 3}(z)-e_2z^2  \sum_{i\ge 3} i e_i z^i- e_2z^2  (z e_1) G_{\ge 3}(z).
\end{align*}

Thus, we have that 
\begin{align}
    \frac{1}{4}\sum_{n\ge 3} X_{P_{n,v,w}} z^{n+2}
&=\frac{1}{2}(1-e_2z^2)\cX_{P_{v}}+\frac{1}{2}\alpha \nonumber \\
&=\left(K(z)+z e_1 G(z)\right)\frac{G_{\ge 3}(z)^2}{D(z)}
+\sum_{i\ge 5} ie_i z^i+e_1z \sum_{i\ge 4} (i-1) e_i z^i \label{eqn:manupulation-twinning-leaves-1} \\
&\quad+\left(K(z)+z e_1 G(z)\right)G_{\ge 3}(z)-e_2z^2  \sum_{i\ge 3} i e_i z^i- e_2z^2  (z e_1) G_{\ge 3}(z),\label{eqn:manupulation-twinning-leaves-2}
\end{align}
where the terms in line~\eqref{eqn:manupulation-twinning-leaves-1} are $e$-positive. 
Thus, we only need to show that the terms in line~\eqref{eqn:manupulation-twinning-leaves-2} are also $e$-positive. 
Note that 
\[K(z)G_{\ge 3}(z)=\left(2e_2z^2+\sum_{i\geq 3}ie_iz^i\right)G_{\geq 3}(z)=2e_2z^2G_{\ge 3}(z) + G_{\ge 3}(z)\sum_{i\ge 3}i e_i z^i.\] 
Together with the fact that $G(z)-e_2z^2 =G_{\ge 3}(z)$, line~\eqref{eqn:manupulation-twinning-leaves-2} can be written as
\begin{align}
&\left(K(z)+z e_1 G(z)\right)G_{\ge 3}(z)-e_2z^2  \sum_{i\ge 3} i e_i z^i- e_2z^2  (z e_1) G_{\ge 3}(z) \nonumber\\
&=G_{\ge 3}(z)\sum_{i\ge 3}i e_i z^i +e_2z^2 \left(2\sum_{i\ge 3} (i-1) e_i z^i-\sum_{i\ge 3} i e_i z^i \right)+ e_1z (G(z)-e_2z^2 ) G_{\ge 3}(z) \nonumber\\
&=G_{\ge 3}(z)\sum_{i\ge 3}i e_i z^i +e_2z^2 \sum_{i\ge 3} (i-2) e_i z^i+ e_1z  G_{\ge 3}(z)^2. \label{eqn:manupulation-twinning-leaves-3}
\end{align}
Since the expression in~\eqref{eqn:manupulation-twinning-leaves-3} is also $e$-positive, the result follows. 

The second expression involving $\cX_P$ follows as in the proof of \Cref{prop:epos-via-gf-twin-pathS-leaf}.
\end{proof}

In particular, we can extract the following  formulas for the  coefficients.
\begin{cor}\label{cor:double-twin-path-leaveS-coeffS}
Let $c_\lambda$ be the coefficient of $e_\lambda z^{|\lambda|}$ in $\mathcal{X}_{P_{v,w}}$, that is, $\mathcal{X}_{P_{v,w}}=\sum c_\lambda e^{\lambda}z^{|\lambda|}$. We have the following list of closed formulas:
\begin{enumerate}[(a)] 
\itemsep0.1in
\item For $k\geq 3$, $c_{(k+2)}=4(k+2)$,   $c_{(k,2)}=4(k-2)$, and $c_{(k+1,1)}=4k$. 
\item For $i\ge 3$, $c_{(i,i)}=4(i-1)i$, and for $i,j\ge 3, i\ne j$, $c_{(i,j)}=4(j-1)i+4(i-1)j$.
\item For $i,j\geq 3$, $i\neq j$,  
$c_{(i,i,1)}=4(i-1)^2$,  $c_{(i,j,1)}=8(i-1)(j-1)$, $i,j\ge 3, i\ne j$, and zero otherwise.
\item If $c_{1\cup\mu}\ne 0$, then $1\notin \supp(\mu)$.
\item For all $k\ge 0$, $c_{(3^2,2^{k+1})}=32 $ and $c_{(3^2,2^{k+1},1)}= 16$.
\end{enumerate}
\end{cor}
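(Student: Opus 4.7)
The plan is to extract each coefficient $c_\lambda$ directly from the second, denominator-free expression of the generating function in~\Cref{thm:gf-double-twinS-deg-ge7-len3}:
\[\frac{1}{4}\cX_{P_{v,w}}(z) = \cX_P(z)\, G_{\ge 3}(z)^2 + K_{\ge 5}(z) + G_{\ge 3}(z) \sum_{i\ge 3}e_iz^i + e_1 z\, G_{\ge 4}(z) + e_2 z^2 \sum_{i\ge 3} (i-2) e_i z^i.\]
The first step is to classify the partition ``shapes'' $\lambda$ each of the five summands can contribute to: $K_{\ge 5}(z)$ contributes only single parts $k\ge 5$; $e_1 z\, G_{\ge 4}(z)$ contributes only shapes $(i,1)$ with $i\ge 4$; $e_2 z^2 \sum_{i\ge 3}(i-2)e_iz^i$ contributes only $(i,2)$ with $i\ge 3$; $G_{\ge 3}(z)\sum_{i\ge 3}e_iz^i$ contributes only $(i,j)$ with $i,j\ge 3$; and $\cX_P(z)\,G_{\ge 3}(z)^2$ contributes to partitions having at least two parts $\ge 3$ (from $G_{\ge 3}^2$), with the remaining parts matching some $e_\mu$ appearing in $\cX_P(z)$. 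Observe moreover that $e_1$ appears at most linearly in each monomial on the right, since $\cX_P(z)=E(z)/D(z)$ has $e_1$ only linearly (as $D(z)$ has no $e_1$) and the other four summands contain $e_1$ either not at all or exactly once; this already gives part (d).

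Next I would extract each claimed coefficient by reading off the contributing summands. For (a), $c_{(k+2)}$ ($k\ge 3$) receives a unique contribution $k+2$ from $K_{\ge 5}$; $c_{(k,2)}$ comes only from $e_2z^2\sum(i-2)e_iz^i$, giving $k-2$; and $c_{(k+1,1)}$ comes only from $e_1z\,G_{\ge 4}$, giving $k$. For (b), the shape $(i,j)$ with $i,j\ge 3$ receives contributions both from $\cX_P\,G_{\ge 3}^2$ (paired with $X_{P_0}=1$) and from $G_{\ge 3}\sum e_kz^k$; the respective coefficients of $e_ie_jz^{i+j}$ in these two factors are $(i-1)^2$ and $i-1$ when $i=j$, and $2(i-1)(j-1)$ and $i+j-2$ when $i\ne j$, summing to the quoted $i(i-1)$ and $2ij-i-j$. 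For (c), the shapes $(i,i,1)$ and $(i,j,1)$ can only be reached via $\cX_P\,G_{\ge 3}^2$ (paired with $X_{P_1}=e_1$), since no other $e_1$-bearing summand produces three parts; this gives $(i-1)^2$ and $2(i-1)(j-1)$ respectively. For (e), $(3^2,2^{k+1})$ and $(3^2,2^{k+1},1)$ each have at least three parts, so only $\cX_P\,G_{\ge 3}^2$ contributes: the $G_{\ge 3}^2$ factor uniquely yields $e_3^2z^6$ with coefficient $4$, while $\cX_P$ supplies $e_2^{k+1}$ or $e_2^{k+1}e_1$ with coefficients $2$ and $1$ by~\Cref{prop:coeffs-pathS-cycleS} (both valid for $k\ge 0$), giving the claimed $32$ and $16$ after multiplying by $4$.

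The main bookkeeping obstacle is part (b), where two different summands contribute to the same partition shape $(i,j)$ with $i,j\ge 3$, and one must carefully distinguish the $i=j$ and $i\ne j$ cases because of the different multiplicities arising from the squared factor $G_{\ge 3}^2$. Once the shape classification is set up, every other coefficient is controlled by a single summand, so its extraction is mechanical.
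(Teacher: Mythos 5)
Your proposal is correct and is essentially the argument the paper intends: the corollary is stated as a direct coefficient extraction from \Cref{thm:gf-double-twinS-deg-ge7-len3}, and your classification of which of the five summands can reach each partition shape, together with the observation that $\cX_P(z)=E(z)/D(z)$ carries $e_1$ only linearly, is exactly the needed bookkeeping (your computed values also match all the initial data $X_{P_{3,v,w}},X_{P_{4,v,w}},X_{P_{5,v,w}}$). Using the denominator-free form rather than the first expression with $1/D(z)$ is a minor simplification, not a different method.
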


\subsubsection{Paths twinned at an interior vertex}
\phantom{}

In this section, we establish an $e$-positive generating function for the path $P_{n,\ell}$ twinned at an interior vertex $\ell$, where we label the vertices of $P_n$ by $1,\,2,\ldots, n$ from left to right. As stated in the introduction, the $e$-positivity  can also be deduced from ~\cite[Theorem 7.8]{gebhard_sagan_2001}.

As in the preceding section, we first derive a triple deletion formula for the chromatic symmetric function of $P_{n,\ell}$, 
(\Cref{JM}), and then deduce an $e$-positive generating function for its chromatic symmetric function (\Cref{thm:cX_P-f_ell-formula}).
We begin with some definitions. 
\begin{definition}
    For $n\geq 2$ and $1\leq \ell \leq n-1$, let $\tilde{T}_{n, \ell}$ (\textit{T} for \textit{triangle}) denote the graph obtained from the path graph $P_n$ by adding a vertex adjacent to both $\ell$ and $\ell+1$.  
    For $n\geq 1$ and $1\leq \ell \leq n$, let $F_{n,\ell}$ (\textit{F} for \textit{flagpole}) denote the graph obtained from $P_n$ by adding a vertex adjacent to $\ell$. 
\end{definition}

By the triple deletion argument illustrated in~\Cref{firstpic}, we have the following result.

\begin{lemma}\label{firststep} 
For $n\geq 3$ and $2\leq \ell\leq n-1$, we have
    \[X_{P_{n,\ell}} = 2X_{\tilde{T}_{n,\ell -1}} - X_{\tilde{T}_{\ell, \ell-1}}  X_{P_{n-\ell}}.\] 
\end{lemma}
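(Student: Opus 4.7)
The plan is to apply the triple deletion formula (\Cref{prop:Triple-deletion}) to the triangle on vertices $\ell, \ell+1, \ell'$ inside $P_{n,\ell}$. This triangle is built from the path edge $\{\ell, \ell+1\}$ together with the two edges $\{\ell, \ell'\}$ and $\{\ell+1, \ell'\}$ created by twinning. I would delete the pair $\epsilon_1 = \{\ell+1, \ell'\}$ and $\epsilon_2 = \{\ell, \ell+1\}$, which gives
\[
X_{P_{n,\ell}} \;=\; X_{P_{n,\ell} - \epsilon_1} \;+\; X_{P_{n,\ell} - \epsilon_2} \;-\; X_{P_{n,\ell} - \epsilon_1 - \epsilon_2}.
\]

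The next step is to identify each of the three graphs on the right. In $P_{n,\ell} - \epsilon_1$, the twin $\ell'$ is now adjacent only to $\ell-1$ and $\ell$, while the underlying path $P_n$ is intact; this is exactly $\tilde{T}_{n, \ell-1}$. In $P_{n,\ell} - \epsilon_1 - \epsilon_2$, the vertex $\ell+1$ loses both of its edges into the triangle, disconnecting the graph: on $\{1, \ldots, \ell, \ell'\}$ we recover $\tilde{T}_{\ell, \ell-1}$, and on $\{\ell+1, \ldots, n\}$ we recover $P_{n-\ell}$, so its chromatic symmetric function factors as $X_{\tilde{T}_{\ell, \ell-1}} \, X_{P_{n-\ell}}$.

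The one subtle identification, and the source of the factor of $2$, is that $P_{n,\ell} - \epsilon_2$ is also isomorphic to $\tilde{T}_{n, \ell-1}$. After deleting only $\epsilon_2$, the triangle on $\{\ell-1, \ell, \ell'\}$ persists, and the tail $\ell+1, \ldots, n$ is now attached to this triangle through $\ell'$ rather than through $\ell$. Since both $\ell$ and $\ell'$ are triangle vertices each with one external neighbor on the two-sided chain, the explicit map $\phi$ fixing every vertex except $\ell$ and $\ell'$ and swapping those two is easily checked to send edges to edges and produce $\tilde{T}_{n,\ell-1}$. Substituting these three identifications into the displayed triple deletion formula collapses the first two terms into $2 X_{\tilde{T}_{n,\ell-1}}$ and leaves exactly $- X_{\tilde{T}_{\ell, \ell-1}} X_{P_{n-\ell}}$, proving the lemma.

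The only genuine obstacle is recognizing the isomorphism $P_{n,\ell} - \{\ell, \ell+1\} \cong \tilde{T}_{n, \ell-1}$; the rest of the argument is bookkeeping on which edges remain after each deletion. I would accompany the proof with a figure analogous to \Cref{fig:Cor_Leaf_TwinS} to make the triangle, the two deletions, and the vertex-swap isomorphism visually transparent.
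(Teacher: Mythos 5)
Your proposal is correct and is exactly the paper's argument: the paper proves \Cref{firststep} by applying \Cref{prop:Triple-deletion} to the triangle $\{\ell,\ell+1,\ell'\}$ with the same choice of deleted edges, as illustrated in \Cref{firstpic}. The only difference is that you make explicit the vertex-swap isomorphism $P_{n,\ell}-\{\ell,\ell+1\}\cong \tilde{T}_{n,\ell-1}$ that produces the factor of $2$, which the paper leaves to the reader via the figure; your identification of all three resulting graphs is accurate.
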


\begin{figure}[hb]
    \centering
\begin{tikzpicture}[scale = 1.2]
\node(dots1) at (1.25,0){$\cdots$};
\node(1) at (0,0){$\bullet$};
\node(l-1) at (2.5,0){$\bullet$};
\node(l) at (3.5,0){$\bullet$};
\node(twin) at (3.5,1){$\bullet$};
\node(l+1) at (4.5,0){$\bullet$};
\node(dots2) at (5.75,0){$\cdots$};
\node(n) at (7,0){$\bullet$};
\draw(0,0) -- (dots1);
\draw (dots1) -- (dots2);
\draw(2.5,0) -- (3.5,1) -- (3.5,0);
\draw(3.5,1) -- (4.5,0);
\draw(dots2) -- (7,0);
\node[below, scale = 0.8] at (1){$1$};
\node[below, scale = 0.8, xshift = -3pt] at (l-1){$\ell-1$};
\node[below, scale = 0.8, xshift = 3pt] at (l){$\ell$};
\node[above, xshift = 5pt, scale = 0.8] at (twin){$\ell'$};
\node[below,scale = 0.8, xshift = -2pt] at (l+1){$\ell+1$};
\node[right] at (3.8,0.7){$\epsilon_1$};
\node[above, yshift = -3pt] at (4,0){$\epsilon_2$};
\node[left, xshift = 3pt] at (3.5,0.4){$\epsilon_3$};
\node[below] at (n){$n$};
\node[] at (1,0.75){$P_{n,\ell}$};
\end{tikzpicture}
\begin{tikzpicture}[scale = 1.2]
\node(dots1) at (1.25,0){$\cdots$};
\node(1) at (0,0){$\bullet$};
\node(l-1) at (2.5,0){$\bullet$};
\node(l) at (3.5,0){$\bullet$};
\node(twin) at (3.5,1){$\bullet$};
\node(l+1) at (4.5,0){$\bullet$};
\node(dots2) at (5.75,0){$\cdots$};
\node(n) at (7,0){$\bullet$};
\draw(0,0) -- (dots1);
\draw (dots1) -- (dots2);
\draw(2.5,0) -- (3.5,1) -- (3.5,0);
\draw(dots2) -- (7,0);
\node[below, scale = 0.8] at (1){$1$};
\node[below, scale = 0.8, xshift = -3pt] at (l-1){$\ell-1$};
\node[below, scale = 0.8, xshift = 3pt] at (l){$\ell$};
\node[above, xshift = 5pt, scale = 0.8] at (twin){$\ell'$};
\node[below,scale = 0.8, xshift = -2pt] at (l+1){$\ell+1$};
\node[below] at (n){$n$};
\node[] at (1,0.75){$\widetilde{T}_{n,\ell-1}$};
\end{tikzpicture}
\begin{tikzpicture}[scale = 1.2]
\node(dots1) at (1.25,0){$\cdots$};
\node(1) at (0,0){$\bullet$};
\node(l-1) at (2.5,0){$\bullet$};
\node(l) at (3.5,0){$\bullet$};
\node(twin) at (3.5,1){$\bullet$};
\node(l+1) at (4.5,0){$\bullet$};
\node(dots2) at (5.75,0){$\cdots$};
\node(n) at (7,0){$\bullet$};
\draw(0,0) -- (dots1);
\draw (dots1) -- (l);
\draw(l+1) -- (dots2);
\draw(2.5,0) -- (3.5,1) -- (3.5,0);
\draw(dots2) -- (7,0);
\node[below, scale = 0.8] at (1){$1$};
\node[below, scale = 0.8, xshift = -3pt] at (l-1){$\ell-1$};
\node[below, scale = 0.8, xshift = 3pt] at (l){$\ell$};
\node[above, xshift = 5pt, scale = 0.8] at (twin){$\ell'$};
\node[below,scale = 0.8, xshift = -2pt] at (l+1){$\ell+1$};
\node[below] at (n){$n$};
\node[] at (1,0.75){$\widetilde{T}_{\ell,\ell-1} \sqcup P_{n-\ell}$};
\end{tikzpicture}
    \caption{The triple deletion argument applied as in~\Cref{firststep}.}
    \label{firstpic}
\end{figure}
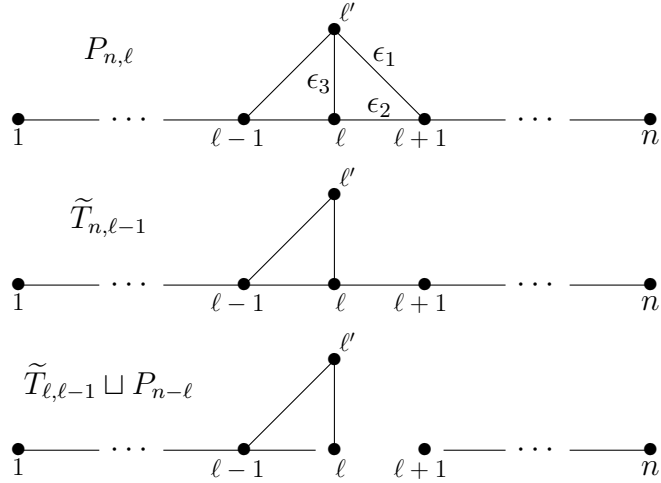

By carefully applying the triple deletion argument to various $P_{n,\ell}$, we can deal with the triangles $\widetilde{T}_{n,\ell}$ by ``shifting" them around. Note that $\widetilde{T}_{n, \ell}$ has a triangle, and so the triple deletion argument applies to two different sets of edges, to which we refer as \emph{left} and \emph{right shifts}. We illustrate them on the left-hand side and right-hand side of~\Cref{fig:LeftAndRight}, respectively. 

\begin{figure}[ht]
\centering
\begin{tabular}{cc}
\begin{tikzpicture}
\node(dots1) at (1.25,0){$\cdots$};
\node(1) at (0,0){$\bullet$};
\node(l-k-1) at (2.5,0){$\bullet$};
\node(l-k) at (3.5,0){$\bullet$};
\node(twin) at (3.5,1){$\bullet$};
\node(dots2) at (4.75,0){$\cdots$};
\node(n) at (6,0){$\bullet$};
\draw(0,0) -- (dots1);
\draw (dots1) -- (dots2);
\draw(2.5,0) -- (3.5,1) -- (3.5,0);
\draw(dots2) -- (6,0);
\node[below, scale = 0.8] at (1){$1$};
\node[below, scale = 0.8, xshift = 3pt] at (l-k){$\ell+1$};
\node[right, scale = 0.8] at (twin){$(\ell+1)'$};
\node[below,scale = 0.8, xshift = -2pt] at (l-k-1){$\ell$};
\node[right] at (3.5,0.5){$\epsilon_1$};
\node[above, yshift = -3pt] at (3,0){$\epsilon_2$};
\node[left] at (3.25,0.75){$\epsilon_3$};
\node[below] at (n){$n$};
\node[] at (1,0.75){$\widetilde{T}_{n,\ell}$};
\end{tikzpicture}&
\begin{tikzpicture}
\node(dots1) at (1.25,0){$\cdots$};
\node(1) at (0,0){$\bullet$};
\node(l-k-1) at (2.5,0){$\bullet$};
\node(l-k) at (3.5,0){$\bullet$};
\node(twin) at (3.5,1){$\bullet$};
\node(dots2) at (4.75,0){$\cdots$};
\node(n) at (6,0){$\bullet$};
\draw(0,0) -- (dots1);
\draw (dots1) -- (dots2);
\draw(2.5,0) -- (3.5,1) -- (3.5,0);
\draw(dots2) -- (6,0);
\node[below, scale = 0.8] at (1){$1$};
\node[below, scale = 0.8, xshift = 3pt] at (l-k){$\ell+1$};
\node[right, scale = 0.8] at (twin){$(\ell+1)'$};
\node[below,scale = 0.8, xshift = -2pt] at (l-k-1){$\ell$};
\node[right] at (3.5,0.5){$\epsilon_3$};
\node[above, yshift = -3pt] at (3,0){$\epsilon_2$};
\node[left] at (3.25,0.75){$\epsilon_1$};
\node[below] at (n){$n$};
\node[] at (1,0.75){$\widetilde{T}_{n,\ell}$};
\end{tikzpicture}\\
\begin{tikzpicture}
\node(dots1) at (1.25,0){$\cdots$};
\node(1) at (0,0){$\bullet$};
\node(l-k-1) at (2.5,0){$\bullet$};
\node(l-k) at (3.5,0){$\bullet$};
\node(twin) at (3.5,1){$\bullet$};
\node(dots2) at (4.75,0){$\cdots$};
\node(n) at (6,0){$\bullet$};
\draw(0,0) -- (dots1);
\draw (dots1) -- (dots2);
\draw(2.5,0) -- (3.5,1);
\draw(dots2) -- (6,0);
\node[below, scale = 0.8] at (1){$1$};
\node[below, scale = 0.8, xshift = 3pt] at (l-k){$\ell+1$};
\node[right, scale = 0.8] at (twin){$(\ell+1)'$};
\node[below,scale = 0.8, xshift = -2pt] at (l-k-1){$\ell$};
\node[below] at (n){$n$};
\node[] at (1,0.75){$F_{n,\ell}$};
\end{tikzpicture}&
\begin{tikzpicture}
\node(dots1) at (1.25,0){$\cdots$};
\node(1) at (0,0){$\bullet$};
\node(l-k-1) at (2.5,0){$\bullet$};
\node(l-k) at (3.5,0){$\bullet$};
\node(twin) at (3.5,1){$\bullet$};
\node(dots2) at (4.75,0){$\cdots$};
\node(n) at (6,0){$\bullet$};
\draw(0,0) -- (dots1);
\draw(dots1)--(dots2);
\draw(3.5,0) -- (3.5,1);
\draw(dots2) -- (6,0);
\node[below, scale = 0.8] at (1){$1$};
\node[below, scale = 0.8, xshift = 3pt] at (l-k){$\ell+1$};
\node[below,scale = 0.8, xshift = -2pt] at (l-k-1){$\ell$};
\node[right, scale = 0.8] at (twin){$(\ell+1)'$};
\node[below] at (n){$n$};
\node[] at (1,0.75){$F_{n,\ell+1}$};
\end{tikzpicture}\\
\begin{tikzpicture}
\node(dots1) at (1.25,0){$\cdots$};
\node(1) at (0,0){$\bullet$};
\node(l-k-1) at (2.5,0){$\bullet$};
\node(l-k) at (3.5,0){$\bullet$};
\node(twin) at (3.5,1){$\bullet$};
\node(dots2) at (4.75,0){$\cdots$};
\node(n) at (6,0){$\bullet$};
\draw(0,0) -- (dots1);
\draw (dots1) -- (2.5,0);
\draw(3.5,0) -- (dots2);
\draw(2.5,0) -- (3.5,1) --(3.5,0);
\draw(dots2) -- (6,0);
\node[below, scale = 0.8] at (1){$1$};
\node[below, scale = 0.8, xshift = 3pt] at (l-k){$\ell+1$};
\node[right, scale = 0.8] at (twin){$(\ell+1)'$};
\node[below,scale = 0.8, xshift = -2pt] at (l-k-1){$\ell$};
\node[below] at (n){$n$};
\node[] at (1,0.75){$P_{n+1}$};
\end{tikzpicture}&
\begin{tikzpicture}
\node(dots1) at (1.25,0){$\cdots$};
\node(1) at (0,0){$\bullet$};
\node(l-k-1) at (2.5,0){$\bullet$};
\node(l-k) at (3.5,0){$\bullet$};
\node(twin) at (3.5,1){$\bullet$};
\node(dots2) at (4.75,0){$\cdots$};
\node(n) at (6,0){$\bullet$};
\draw(0,0) -- (dots1);
\draw (dots1) -- (2.5,0);
\draw(3.5,0) -- (dots2);
\draw(2.5,0) -- (3.5,1) -- (3.5,0);
\draw(dots2) -- (6,0);
\node[below, scale = 0.8] at (1){$1$};
\node[below, scale = 0.8, xshift = 3pt] at (l-k){$\ell+1$};
\node[right, scale = 0.8] at (twin){$(\ell+1)'$};
\node[below,scale = 0.8, xshift = -2pt] at (l-k-1){$\ell$};
\node[below] at (n){$n$};
\node[] at (1,0.75){$P_{n+1}$};
\end{tikzpicture}\\
\begin{tikzpicture}
\node(dots1) at (1.25,0){$\cdots$};
\node(1) at (0,0){$\bullet$};
\node(l-k-1) at (2.5,0){$\bullet$};
\node(l-k) at (3.5,0){$\bullet$};
\node(twin) at (3.5,1){$\bullet$};
\node(dots2) at (4.75,0){$\cdots$};
\node(n) at (6,0){$\bullet$};
\draw(0,0) -- (dots1);
\draw (dots1) -- (2.5,0);
\draw(3.5,0) -- (dots2);
\draw(2.5,0) -- (3.5,1);
\draw(dots2) -- (6,0);
\node[below, scale = 0.8] at (1){$1$};
\node[below, scale = 0.8, xshift = 3pt] at (l-k){$\ell+1$};
\node[right, scale = 0.8] at (twin){$(\ell+1)'$};
\node[below,scale = 0.8, xshift = -2pt] at (l-k-1){$\ell$};
\node[below] at (n){$n$};
\node[] at (0.25,0.75){$P_{\ell+1} \sqcup P_{n-\ell}$};
\node[circle, radius = 1pt, fill = white] at (6.5,0){};
\end{tikzpicture}&
\begin{tikzpicture}
\node(dots1) at (1.25,0){$\cdots$};
\node(1) at (0,0){$\bullet$};
\node(l-k-1) at (2.5,0){$\bullet$};
\node(l-k) at (3.5,0){$\bullet$};
\node(twin) at (3.5,1){$\bullet$};
\node(dots2) at (4.75,0){$\cdots$};
\node(n) at (6,0){$\bullet$};
\draw(0,0) -- (dots1);
\draw (dots1) -- (2.5,0);
\draw(3.5,0) -- (dots2);
\draw(3.5,1) -- (3.5,0);
\draw(dots2) -- (6,0);
\node[below, scale = 0.8] at (1){$1$};
\node[below, scale = 0.8, xshift = 3pt] at (l-k){$\ell+1$};
\node[right, scale = 0.8] at (twin){$(\ell+1)'$};
\node[below,scale = 0.8, xshift = -2pt] at (l-k-1){$\ell$};
\node[below] at (n){$n$};
\node[] at (0.25,0.75){$P_{\ell} \sqcup P_{n-\ell+1}$};
\node[circle, radius = 1pt, fill = white] at (6.5,0){};
\end{tikzpicture}
\end{tabular}
    
\caption{Illustration of~\Cref{lemma: left-and-right-shift}.}\label{fig:LeftAndRight}
\end{figure}

\begin{lemma}[Left and Right Shift Lemma]\label{lemma: left-and-right-shift}
For $n\geq 3$ and $2\leq \ell \leq n-1$, we have 
\[
\begin{array}{cc|cc}
X_{\widetilde{T}_{n, \ell}} = X_{F_{n, \ell }} + X_{P_{n+1}} - X_{P_{\ell +1}} X_{P_{n-\ell}}& \quad & \quad & X_{\widetilde{T}_{n, \ell}} = X_{F_{n, \ell +1}} + X_{P_{n+1}} - X_{P_{\ell }} X_{P_{n - \ell + 1}}\\[0.25cm]
\text{\textit{Left shift}} & & & \text{\textit{Right shift}}
\end{array} 
\] 
\end{lemma}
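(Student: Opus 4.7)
The plan is to apply the Triple Deletion Formula (Proposition from Orellana--Scott) twice to the unique triangle of $\widetilde{T}_{n,\ell}$, once for each choice of ``slant'' edge as the distinguished edge $\epsilon_1$. The two choices are exactly what the figure is illustrating: the first gives the left shift and the second gives the right shift.

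More precisely, I would label the three edges of the triangle on the vertices $\ell$, $\ell+1$, and the added vertex (call it $\ell^\ast$) as $a=\{\ell,\ell+1\}$ (the path edge), $b=\{\ell,\ell^\ast\}$ (left slant), and $c=\{\ell+1,\ell^\ast\}$ (right slant). For the left shift, I would apply the Triple Deletion Formula with $\epsilon_1=c$ and $\epsilon_2=a$. Removing $c$ alone leaves a path $P_n$ with a pendant vertex attached at $\ell$, which is exactly $F_{n,\ell}$. Removing $a$ alone leaves the graph $1\text{-}\cdots\text{-}\ell\text{-}\ell^\ast\text{-}(\ell+1)\text{-}\cdots\text{-}n$, which is isomorphic to $P_{n+1}$. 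Removing both $a$ and $c$ disconnects the graph into the path $1\text{-}\cdots\text{-}\ell\text{-}\ell^\ast$ of length $\ell+1$ and the path $(\ell+1)\text{-}\cdots\text{-}n$ of length $n-\ell$, yielding $P_{\ell+1}\sqcup P_{n-\ell}$, whose chromatic symmetric function factors as $X_{P_{\ell+1}}X_{P_{n-\ell}}$. Assembling these three identifications into the Triple Deletion Formula immediately produces the left shift identity.

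The right shift is completely symmetric: I would take $\epsilon_1=b$ and $\epsilon_2=a$ instead. Now removing $b$ attaches $\ell^\ast$ as a pendant at $\ell+1$, giving $F_{n,\ell+1}$; removing $a$ still gives $P_{n+1}$ (the path now passes through $\ell^\ast$ between $\ell$ and $\ell+1$ via the other slant edge $c$); and removing $\{a,b\}$ produces the disjoint union of $1\text{-}\cdots\text{-}\ell$ with $\ell^\ast\text{-}(\ell+1)\text{-}\cdots\text{-}n$, i.e., $P_\ell\sqcup P_{n-\ell+1}$. Substituting into the Triple Deletion Formula yields the right shift.

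No step looks hard: the only thing to be careful about is verifying that each of the six auxiliary graphs is correctly identified with a standard path or flagpole, and that the boundary cases $\ell=2$ and $\ell=n-1$ do not create degenerate components. When $\ell=2$ the left-shift term $P_{\ell+1}\sqcup P_{n-\ell}$ becomes $P_3\sqcup P_{n-2}$, and when $\ell=n-1$ the right-shift term becomes $P_{n-1}\sqcup P_2$, so both remain well-defined under the hypothesis $2\le\ell\le n-1$. The main obstacle, if any, is purely bookkeeping: making sure the orientation conventions in the figure match the edge labels used when invoking the Triple Deletion Formula.
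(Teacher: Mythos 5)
Your proposal is correct and is exactly the argument the paper intends: the paper gives no written proof for this lemma, only the illustration in \Cref{fig:LeftAndRight}, which depicts precisely your two applications of the Triple Deletion Formula to the triangle on $\ell$, $\ell+1$, and the added vertex (deleting the right slant and the path edge for the left shift, the left slant and the path edge for the right shift). Your identifications of the six auxiliary graphs as $F_{n,\ell}$ or $F_{n,\ell+1}$, $P_{n+1}$, and $P_{\ell+1}\sqcup P_{n-\ell}$ or $P_{\ell}\sqcup P_{n-\ell+1}$ all match the figure, so there is nothing to add.
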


Our next step is to use~\Cref{lemma: left-and-right-shift} to obtain a formula equivalent to that in~\Cref{firststep} which does not involve twinning paths. 

\begin{prop}\label{JM}
    For $n\geq 3$ and $2\leq \ell\leq n-1$, we have
    \[X_{P_{n, \ell}} = -2X_{P_{\ell - 1}} X_{P_{n - \ell +2}} + 2e_1 X_{P_n} + 4X_{P_{n+1}} - 2X_{P_{\ell}} X_{P_{n - \ell+1}} + 2e_2 X_{P_{\ell - 1}}  X_{P_{n -\ell}} -2X_{P_{\ell+1}} X_{P_{n - \ell}}.\]
\end{prop}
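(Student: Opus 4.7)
The plan is to combine \Cref{firststep} with the Left/Right Shift Lemma (\Cref{lemma: left-and-right-shift}) to peel off the triangle graphs $\widetilde{T}_{n,\ell-1}$ and $\widetilde{T}_{\ell,\ell-1}$, and then eliminate the resulting flagpole terms $F_{n,\ell}$ via a telescoping identity, reducing everything to path chromatic symmetric functions.

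First, I would dispose of $X_{\widetilde{T}_{\ell,\ell-1}}$: this graph is a triangle glued onto the leaf $\ell$ of $P_\ell$, so applying the right shift with $n=\ell$ and using $F_{\ell,\ell}=P_{\ell+1}$ (a pendant at a leaf just extends the path) yields
\[X_{\widetilde{T}_{\ell,\ell-1}} = 2 X_{P_{\ell+1}} - 2 e_2 X_{P_{\ell-1}},\]
recovering \Cref{leaf-twinS}. Substituting this into \Cref{firststep} gives
\[X_{P_{n,\ell}} = 2 X_{\widetilde{T}_{n,\ell-1}} - 2 X_{P_{\ell+1}} X_{P_{n-\ell}} + 2 e_2 X_{P_{\ell-1}} X_{P_{n-\ell}}.\]

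The main ingredient is a closed formula for $X_{F_{n,\ell}}$ purely in terms of paths. Subtracting the right shift from the left shift in \Cref{lemma: left-and-right-shift} cancels $X_{\widetilde{T}_{n,k}}$ and $X_{P_{n+1}}$, giving
\[X_{F_{n,k}} - X_{F_{n,k+1}} = X_{P_{k+1}} X_{P_{n-k}} - X_{P_k} X_{P_{n-k+1}}.\]
This is telescoping under the shift $k \mapsto k+1$: summing from $k=\ell$ to $k=n-1$ collapses to
\[X_{F_{n,\ell}} - X_{F_{n,n}} = X_{P_n} X_{P_1} - X_{P_\ell} X_{P_{n-\ell+1}},\]
and since $F_{n,n} = P_{n+1}$ and $X_{P_1} = e_1$, we obtain
\[X_{F_{n,\ell}} = X_{P_{n+1}} + e_1 X_{P_n} - X_{P_\ell} X_{P_{n-\ell+1}}.\]
As a sanity check, at $\ell = 1$ or $\ell = n$ this reduces to $X_{P_{n+1}}$, matching the fact that attaching a pendant at a leaf of $P_n$ produces $P_{n+1}$.

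To conclude, I would apply the right shift to $X_{\widetilde{T}_{n,\ell-1}}$, producing
\[X_{\widetilde{T}_{n,\ell-1}} = X_{F_{n,\ell}} + X_{P_{n+1}} - X_{P_{\ell-1}} X_{P_{n-\ell+2}},\]
and substitute the boxed formula for $X_{F_{n,\ell}}$ to get
\[X_{\widetilde{T}_{n,\ell-1}} = 2 X_{P_{n+1}} + e_1 X_{P_n} - X_{P_\ell} X_{P_{n-\ell+1}} - X_{P_{\ell-1}} X_{P_{n-\ell+2}}.\]
Plugging this into the earlier expression for $X_{P_{n,\ell}}$ and collecting terms produces exactly the six-term identity claimed. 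The main obstacle is spotting the telescoping structure in the flagpole difference: one must carefully align the index shift between $X_{P_{k+1}}X_{P_{n-k}}$ and $X_{P_k}X_{P_{n-k+1}}$ to recognize that only the boundary terms survive, and verify that the formula for $X_{F_{n,\ell}}$ remains correct at the boundary values $\ell \in \{1,n\}$ where the shift lemma does not directly apply.
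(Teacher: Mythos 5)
Your proposal is correct and follows essentially the same route as the paper's proof: start from \Cref{firststep}, use the Right Shift to reduce $X_{\widetilde{T}_{\ell,\ell-1}}$, telescope the left-minus-right shift identity to get $X_{F_{n,\ell}} = X_{P_{n+1}} + e_1 X_{P_n} - X_{P_\ell}X_{P_{n-\ell+1}}$, and substitute back. The only (immaterial) differences are that you telescope upward to $F_{n,n}=P_{n+1}$ rather than downward to $F_{n,1}=P_{n+1}$, and you apply the Right Shift to $\widetilde{T}_{n,\ell-1}$ where the paper uses the Left Shift.
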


\begin{proof}
    Applying the Left and Right Shift Lemmas at $\ell - k- 1$ implies that 
    \begin{equation}\label{eq:left-right-together}
        X_{F_{n,\ell-k}} = X_{F_{n,\ell-k-1}} + X_{P_{\ell-k-1}}  X_{P_{n-\ell+k+2}} - X_{P_{\ell-k}} X_{P_{n-\ell+k+1}}.
    \end{equation}
    By applying~\eqref{eq:left-right-together} repeatedly, we get that 
    \begin{equation}\label{eq:PnlSumFormula}
        X_{F_{n,\ell}} = X_{F_{n,1}} + \sum_{k=0}^{\ell-2}  \left( X_{P_{\ell-k-1}}  X_{P_{n-\ell+k+2}} - X_{P_{\ell-k}} X_{P_{n-\ell+k+1}}\right).
    \end{equation}
    Since $F_{n,1}$ is precisely $P_{n+1}$, we can telescope the sum in~\eqref{eq:PnlSumFormula} to obtain
    \begin{equation}\label{telescope}
        X_{F_{n,\ell}} = X_{P_{n+1}} +   X_{P_{1}}  X_{P_{n}} - X_{P_{\ell}} X_{P_{n-\ell+1}}.
    \end{equation}

   Recall the formula in~\Cref{firststep}:
    \begin{equation*}
        X_{P_{n,\ell}} = 2X_{\widetilde{T}_{n,\ell-1}}-X_{\widetilde{T}_{\ell,\ell-1}} X_{P_{n-\ell}}.
    \end{equation*}

    We apply the Left Shift Lemma to $X_{\widetilde{T}_{n, \ell -1}}$ and the Right Shift Lemma to $X_{\widetilde{T}_{\ell, \ell - 1}}$, and obtain
    \begin{align*}
        X_{P_{n,\ell}} =  2\left(X_{F_{n, \ell-1}} + X_{P_{n+1}} - X_{P_{\ell}}  X_{P_{n - \ell + 1}}\right) 
         - \left(X_{F_{\ell, \ell}} + X_{P_{\ell + 1}} - X_{P_{\ell -1}} X_{P_2}\right) X_{P_{n - \ell}}.
    \end{align*}
    Since $F_{\ell,\ell}$ is $P_{\ell + 1}$, we can rewrite the last equation as: 
    \begin{align*}
        X_{P_{n,\ell}} &= 2\left(X_{F_{n, \ell-1}} + X_{P_{n+1}} - X_{P_{\ell}}  X_{P_{n - \ell + 1}}\right) - \left(2X_{P_{\ell + 1}} - X_{P_{\ell -1}} X_{P_2}\right) X_{P_{n - \ell}}\\
        &= 2X_{F_{n, \ell-1}} + \left[ 2X_{P_{n + 1}} - 2X_{P_{\ell}} X_{P_{n - \ell + 1}} + X_{P_{\ell - 1}} X_{P_2} X_{P_{n-\ell}} - 2X_{P_{\ell + 1}}  X_{P_{n - \ell}}\right].
    \end{align*}
    Substituting in (\ref{telescope}) for $X_{F_{n, \ell - 1}}$, we have:
    \begin{align*}
        X_{P_{n,\ell}} = & 2\left(X_{P_{n+1}} + X_{P_1} X_{P_n} - X_{P_{\ell - 1}}  X_{P_{n -\ell}}\right) \\
        &\quad+ \left[ 2X_{P_{n + 1}} - 2X_{P_{\ell}} X_{P_{n - \ell + 1}} + X_{P_{\ell - 1}} X_{P_2} X_{P_{n-\ell}} - 2X_{P_{\ell + 1}}  X_{P_{n - \ell}}\right].
    \end{align*}
    
    Finally, the formula in the statement follows by collecting all the terms and evaluating $X_{P_1} = e_1$ and $X_{P_2} = 2e_2$.
\end{proof}

Now we investigate the generating function of $X_{P_{n,\ell}}$. For this, we introduce two families of polynomials in the variable $z$ with coefficients in the ring of symmetric functions.
    
\begin{definition}\label{def: f_l and g_l}
\begin{enumerate}[(a)]
\item For $\ell\ge 2$, we define the following polynomial of degree $\ell+1$ in $z$:
$$f_\ell(z):=  2+e_1z
-X_{P_{\ell-1}}z^{\ell-1}(1-e_2z^2 ) -X_{P_\ell}z^\ell   -X_{P_{\ell+1}}z^{\ell+1} .$$ 
\item For $\ell\ge 2$, we define the following polynomial of degree $\ell+1$  :
\[
g_\ell(z):=  -\sum_{j=0}^{\ell}X_{P_j}z^j -(1+e_1z)\sum_{j=0}^{\ell-2}X_{P_j}z^j 
{-(X_{P_{\ell+1}} -e_2 X_{P_{\ell-1}}) z^{\ell+1}}.
\]

\end{enumerate}
\end{definition}
The following result gives an identity for the generating function for the chromatic symmetric function of the twinned path in terms of the generating function for the chromatic symmetric function of the path and the new families of polynomials introduced. 

\begin{prop}\label{prop:gf-twin-path-int-verticeS}  Let $2\le\ell\le n-1$. 
 The generating function for the chromatic symmetric function of the twinned path $P_{n,\ell}$, twinned at vertex $\ell$, can be written in terms of the path generating function $\cX_P$ as follows:
\begin{align}\label{eqn:KEY-twin-path-gfS}
\sum_{n\ge \ell+1} X_{P_{n,\ell} } z^{n+1} =2\cX_P(z) f_\ell(z) +2g_\ell(z). 
\end{align}
\end{prop}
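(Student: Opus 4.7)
The plan is to take the explicit identity for $X_{P_{n,\ell}}$ given by Proposition~\ref{JM}, multiply both sides by $z^{n+1}$, and sum over $n\ge \ell+1$. Each of the six terms on the right-hand side of Proposition~\ref{JM} is either a single chromatic symmetric function of a path (terms $2eX_{P_n}$ and $4X_{P_{n+1}}$) or a product of two path chromatic symmetric functions whose first factor $X_{P_{\ell-c}}$ depends only on $\ell$ and whose second factor $X_{P_{n-\ell+c'}}$ is a shifted index of $n$ (terms $1,4,5,6$). In each case, summing $z^{n+1}$ times the term over $n\ge \ell+1$ will produce $X_{P_{\ell-c}}z^{\ell-c}$ times a \emph{tail} of the generating function $\cX_P(z)$ of the form
\[
\cX_P(z)-\sum_{j=0}^{r}X_{P_j}z^j
\]
for an appropriate cutoff $r$ depending on the index shift, and analogously a tail of $\cX_P(z)$ for the two single-factor terms. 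After reindexing, each tail splits cleanly into an $\cX_P(z)$ part and a polynomial of degree $\le \ell+1$ in $z$.

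Next I would collect, separately, the coefficient of $\cX_P(z)$ and the purely polynomial remainder. The coefficient of $\cX_P(z)$ will come from summing
\[
-2X_{P_{\ell-1}}z^{\ell-1}+2e_1 z+4-2X_{P_{\ell}}z^{\ell}+2e_2X_{P_{\ell-1}}z^{\ell+1}-2X_{P_{\ell+1}}z^{\ell+1},
\]
which is exactly $2f_\ell(z)$ after grouping the $X_{P_{\ell-1}}z^{\ell-1}$ and $e_2X_{P_{\ell-1}}z^{\ell+1}$ terms into $-X_{P_{\ell-1}}z^{\ell-1}(1-e_2 z^2)$; this gives the first summand on the right-hand side of the claimed identity.

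Finally, I would verify that the polynomial remainder equals $2g_\ell(z)$. The four lower-order-tail corrections contribute $2X_{P_{\ell-1}}z^{\ell-1}(1+e_1 z+2e_2z^2)+2X_{P_\ell}z^\ell(1+e_1z)+2X_{P_{\ell+1}}z^{\ell+1}-2e_2X_{P_{\ell-1}}z^{\ell+1}-2e_1 z\sum_{j=0}^{\ell}X_{P_j}z^j-4\sum_{j=0}^{\ell+1}X_{P_j}z^j$, and I would show that this simplifies to $2g_\ell(z)$ using the identity $X_{P_2}=2e_2$ and the path recursion from Proposition~\ref{prop:RECURSION-pathS-cycleS}(a) applied once to $X_{P_{\ell-1}}(1+e_1z+2e_2 z^2)=X_{P_{\ell-1}}+e_1 zX_{P_{\ell-1}}+X_{P_2}X_{P_{\ell-1}}z^2$. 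The main obstacle is purely bookkeeping: correctly tracking the cutoff $r$ for each tail of $\cX_P(z)$ (which depends both on the starting index $n\ge \ell+1$ and on the shift appearing in the second factor), and rearranging the resulting polynomial of degree $\ell+1$ into the precise two-part form that defines $g_\ell(z)$ in Definition~\ref{def: f_l and g_l}.
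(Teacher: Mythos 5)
Your proposal is correct and follows essentially the same route as the paper: multiply the identity of Proposition~\ref{JM} by $z^{n+1}$, sum over $n\ge\ell+1$, collect the coefficient of $\cX_P(z)$ (which regroups into $2f_\ell(z)$ exactly as you describe), and check that the polynomial tail corrections collapse to $2g_\ell(z)$. One minor remark: the final simplification of the remainder to $2g_\ell(z)$ is pure regrouping using only $X_{P_0}=1$, $X_{P_1}=e_1$, $X_{P_2}=2e_2$; the path recurrence of Proposition~\ref{prop:RECURSION-pathS-cycleS}(a) is not actually needed there.
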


\begin{proof} 
We reorder the terms appearing in the recurrence in~\Cref{JM} to make the source of the factor $f_\ell(z)$ accompanying $\cX_P$ more transparent: 
\begin{equation}\label{eqn:Reorder-twin-path-deg2-recS}
X_{P_{n,\ell} }= 4X_{P_{n+1}}+ 2e_1 X_{P_n} 
-2X_{P_{\ell - 1}} (X_{P_{n - \ell +2}} -e_2  X_{P_{n -\ell}})  - 2X_{P_{\ell}} X_{P_{n - \ell+1}}  -2X_{P_{\ell+1}} X_{P_{n - \ell}}.
\end{equation}
Multiplying by $z^{n+1}$ and summing over $n\ge \ell+1$ gives 
\begin{align*}
&\sum_{n\ge \ell+1} X_{P_{n,\ell}} z^{n+1}\\
 &=
2\big[2+e_1z
-X_{P_{\ell-1}}z^{\ell-1}(1-e_2z^2 ) -X_{P_\ell}z^\ell   -z^{\ell+1} X_{P_\ell+1} \big]\cX_P\notag\\
&   -4 \sum_{j=0}^\ell X_{P_j}z^j  -2z e_1 \sum_{j=0}^{\ell-1} X_{P_j}z^j  +2z^{\ell-1} (1+z e_1) X_{P_{\ell-1}} +2 X_{P_\ell}z^\ell  {-2(X_{P_{\ell+1}} -e_2 X_{P_{\ell-1}}) z^{\ell+1}}.
\end{align*}
\noindent
Here we have made the substitutions  
$\sum_{j=0}^{2} X_{P_j} z^j=1 + e_1 z+2 e_2 z^2$, $\sum_{j=0}^1 X_{P_j} z^j=1+e_1z$ and $X_{P_0}=1$.

The expression for $f_\ell(z)$ follows immediately from the first line above.

Now rewrite the second line as 
\begin{multline*}
 -4X_{P_\ell}z^\ell -4 X_{P_{\ell-1}}z^{\ell-1}    -4 \sum_{j=0}^{\ell-2} X_{P_j}z^j  -2z^{\ell} e_1 X_{P_{\ell-1}}-2z e_1 \sum_{j=0}^{\ell-2} X_{P_j}z^j \\ + 2X_{P_{\ell-1}}z^{\ell-1}  +2 e_1 X_{P_{\ell-1}}z^{\ell}  +2 X_{P_\ell}z^\ell 
 {-2(X_{P_{\ell+1}} -e_2 X_{P_{\ell-1}}) z^{\ell+1}}
 ,\end{multline*}
which in turn yields the  expression for $g_\ell(z)$ in \Cref{def: f_l and g_l}. 
\end{proof}

Although 
$f_\ell(z)$ is not $e$-positive, 
we can conclude the following.
\begin{cor}\label{cor:key-fact-fell-polyS}
    The $e$-positivity of $X_{P_{n,\ell}}$ is equivalent to the $e$-positivity of $\cX_P(z) f_\ell(z)$.
\end{cor}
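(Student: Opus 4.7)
The plan is to exploit the generating function identity in \Cref{prop:gf-twin-path-int-verticeS}, namely
\[\sum_{n\ge \ell+1} X_{P_{n,\ell}} z^{n+1} = 2 \cX_P(z) f_\ell(z) + 2 g_\ell(z),\]
and to rearrange it to isolate $\cX_P(z) f_\ell(z)$. The key observation is that $g_\ell(z)$ is a polynomial of $z$-degree $\ell+1$ (by direct inspection of \Cref{def: f_l and g_l}), whereas the power series on the left-hand side has lowest power $z^{\ell+2}$. Hence after rewriting
\[\cX_P(z) f_\ell(z) = -g_\ell(z) + \tfrac{1}{2} \sum_{n\ge \ell+1} X_{P_{n,\ell}} z^{n+1},\]
the two contributions on the right-hand side are supported on disjoint ranges of $z$-degrees. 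This forces $\cX_P(z) f_\ell(z)$ to be $e$-positive if and only if \emph{both} $-g_\ell(z)$ and every individual $X_{P_{n,\ell}}$ (for $n\ge \ell+1$) are $e$-positive.

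The claimed equivalence therefore reduces to verifying that $-g_\ell(z)$ is \emph{unconditionally} $e$-positive. Expanding \Cref{def: f_l and g_l} gives
\[-g_\ell(z) = \sum_{j=0}^{\ell} X_{P_j}\, z^j + (1+e_1 z)\sum_{j=0}^{\ell-2} X_{P_j}\, z^j + \bigl(X_{P_{\ell+1}} - e_2\, X_{P_{\ell-1}}\bigr) z^{\ell+1},\]
and the first two summands are visibly $e$-positive since $X_{P_j}$ is $e$-positive for every $j$. For the third summand I would apply the path recurrence \Cref{prop:RECURSION-pathS-cycleS}(a) and peel off the $j=2$ term, giving
\[X_{P_{\ell+1}} - e_2\, X_{P_{\ell-1}} = (\ell+1)\, e_{\ell+1} + \sum_{j=3}^{\ell} (j-1)\, e_j\, X_{P_{\ell+1-j}},\]
which is a nonnegative combination of products of $e_i$'s, hence $e$-positive. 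Combining these three facts establishes the $e$-positivity of $-g_\ell(z)$.

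The main (really the only) obstacle is conceptual rather than computational: recognizing that the degree asymmetry between $g_\ell(z)$ (a polynomial) and the tail $\sum_{n\ge \ell+1} X_{P_{n,\ell}} z^{n+1}$ permits a clean degree-by-degree decoupling of the biconditional. Once this decoupling is in place, the $e$-positivity of $-g_\ell(z)$ is an essentially immediate consequence of the path recurrence, so no further symmetric function machinery is required.
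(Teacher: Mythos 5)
Your proposal is correct and rests on the same key observation as the paper's own proof: $g_\ell(z)$ has degree at most $\ell+1$ in $z$ while the left-hand side of \eqref{eqn:KEY-twin-path-gfS} begins in degree $\ell+2$, so the low- and high-degree parts of $\cX_P(z)f_\ell(z)$ are exactly $-g_\ell(z)$ and $\tfrac{1}{2}\sum_{n\ge \ell+1}X_{P_{n,\ell}}z^{n+1}$, respectively. Your extra verification that $-g_\ell(z)$ is $e$-positive (peeling the $j=2$ term off the path recurrence to handle $X_{P_{\ell+1}}-e_2X_{P_{\ell-1}}$) is correct and in fact supplies a step the paper leaves implicit, since it is needed for the direction of the biconditional in which $e$-positivity of every $X_{P_{n,\ell}}$ implies that of $\cX_P(z)f_\ell(z)$.
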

\begin{proof} The degree of $g_\ell(z)$ as a polynomial in $z$ is $\ell+1$, while the left-hand side of~\eqref{eqn:KEY-twin-path-gfS} has lowest degree $\ell+2$ in $z$. We conclude that all terms in $g_\ell(z)$ are necessarily canceled out by identical terms in $\cX_P(z) f_\ell(z)$.
\end{proof}

Our next result rewrites $f_\ell(z)$ as a positive expansion of other functions that were introduced in~\eqref{eqn:KEY-epos-serieS}.
\begin{lemma}\label{lemma: fl formula}
For $\ell\geq 2$, we have
     \begin{equation}\label{eq: claim fell}
     f_\ell(z)=\sum_{i=3}^{\ell+1}(i-2)e_iz^i+2(D+G_{\geq \ell+2})+\sum_{i=1}^{\ell-2}(D+G_{\geq \ell+2-i})X_{P_i}z^i.
     \end{equation}
\end{lemma}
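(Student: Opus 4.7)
The plan is to argue by induction on $\ell \geq 2$, relying throughout on the path recurrence of \Cref{prop:RECURSION-pathS-cycleS}(a) and on the series identities recorded in~\eqref{eqn:KEY-epos-serieS}, especially $D(z) + G_{\geq k}(z) = 1 - G_{\leq k-1}(z)$.

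For the base case $\ell = 2$, I would verify both sides directly. Using $X_{P_1} = e_1$, $X_{P_2} = 2 e_2$, and the recurrence value $X_{P_3} = 3 e_3 + e_1 e_2$, the cross terms involving $e_1 e_2 z^3$ cancel and $f_2(z)$ simplifies to $2 - 2 e_2 z^2 - 3 e_3 z^3$. The right-hand side, for $\ell = 2$, collapses to $e_3 z^3 + 2(D + G_{\geq 4})$, and since $G_{\leq 3}(z) = e_2 z^2 + 2 e_3 z^3$, this also equals $2 - 2 e_2 z^2 - 3 e_3 z^3$.

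For the inductive step, I would compute the two increments separately. Writing $R_\ell(z)$ for the claimed right-hand side, a direct manipulation of \Cref{def: f_l and g_l} gives
\[
f_{\ell+1}(z) - f_\ell(z) = (1 - e_2 z^2)\, X_{P_{\ell-1}} z^{\ell-1} + e_2 z^2 \cdot X_{P_\ell} z^\ell - X_{P_{\ell+2}} z^{\ell+2}.
\]
On the $R_\ell$ side, I would use $G_{\geq k+1}(z) - G_{\geq k}(z) = -(k-1) e_k z^k$ to separate the new summand at $i = \ell-1$ (namely $(D + G_{\geq 4}) X_{P_{\ell-1}} z^{\ell-1}$) from the shifted contributions for $i = 1, \ldots, \ell-2$. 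After reindexing $j = \ell + 2 - i$, the latter accumulate into $-z^{\ell+2} \sum_{j=4}^{\ell+1}(j-1) e_j X_{P_{\ell+2-j}}$. Applying the path recurrence in the form $X_{P_{\ell+2}} = (\ell+2) e_{\ell+2} + e_2 X_{P_\ell} + 2 e_3 X_{P_{\ell-1}} + \sum_{j=4}^{\ell+1}(j-1) e_j X_{P_{\ell+2-j}}$ reduces the total $z^{\ell+2}$ coefficient to $-(X_{P_{\ell+2}} - e_2 X_{P_\ell} - 2 e_3 X_{P_{\ell-1}})$. The key identity
\[
D(z) + G_{\geq 4}(z) + 2 e_3 z^3 = 1 - e_2 z^2,
\]
immediate from $G_{\leq 3}(z) = e_2 z^2 + 2 e_3 z^3$, then collapses $R_{\ell+1} - R_\ell$ to exactly the expression for $f_{\ell+1} - f_\ell$ above.

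The main obstacle is purely bookkeeping: I must simultaneously manage three index shifts (the upper limit of the first sum, the cutoff parameter of $G_{\geq \ell + 2 - i}$, and the range of the path recurrence). The crucial structural observation is that splitting off the $j = 2, 3$ terms of the path recurrence produces exactly the $e_2 X_{P_\ell}$ and $2 e_3 X_{P_{\ell-1}}$ terms that the identity $D + G_{\geq 4} + 2 e_3 z^3 = 1 - e_2 z^2$ is designed to absorb. Once the indices are lined up correctly, the proof closes with no further effort.
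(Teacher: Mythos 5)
Your proposal is correct and follows essentially the same route as the paper's proof: induction on $\ell$ with the base case $f_2(z)=2-2e_2z^2-3e_3z^3$ checked directly, followed by showing that both sides satisfy the same increment relation, using the path recurrence of \Cref{prop:RECURSION-pathS-cycleS}(a) with the $j=2,3$ terms split off and the identity $D+G_{\ge 4}=1-e_2z^2-2e_3z^3$ to cancel the $2e_3X_{P_{\ell-1}}z^{\ell+2}$ contributions. The index bookkeeping you describe (the reindexed sum $\sum_{j=4}^{\ell+1}(j-1)e_jX_{P_{\ell+2-j}}$ and the resulting $z^{\ell+2}$ coefficient $-(X_{P_{\ell+2}}-e_2X_{P_\ell}-2e_3X_{P_{\ell-1}})$) matches the paper's computation exactly.
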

\begin{proof}
By~\Cref{def: f_l and g_l},
\[f_\ell(z)=  2+e_1z
+X_{P_{\ell-1}}e_2z^{\ell+1}
-X_{P_{\ell-1}}z^{\ell-1} -X_{P_\ell}z^\ell   -X_{P_{\ell+1}}z^{\ell+1} .
\]

For $\ell=2$, recall that $X_{P_3}=e_2e_1+3e_3$, and  $X_{P_1}=e_1$ and $X_{P_2}=2e_2$. Then we have
\begin{equation*}
\begin{split}
f_2(z)&=2-2e_2z^2-3e_3z^3=2(1-e_2z^2-2e_3z^3) +e_3z^3 \\
&= 2(1-G(z)+G_{\geq 4}(z))+e_3z^3=2(D(z)+G_{\ge 4}(z))+e_3z^3 .
\end{split}
\end{equation*}

Let $\phi_\ell$ denote the right-hand side of~\eqref{eq: claim fell}. We show that $\phi_\ell$ and $f_\ell$ satisfy the same recurrence relation.
It is straightforward to see that for $\ell \ge 2$,
\begin{equation}\label{eq: fell claim lhs-1}
f_{\ell+1}-f_\ell=
X_{P_{\ell}}e_2z^{\ell+2}-X_{P_{\ell+2}}z^{\ell+2}-X_{P_{\ell-1}}e_2z^{\ell+1}+X_{P_{\ell-1}}z^{\ell-1}.
\end{equation}
Next we look at $\phi_{\ell+1}-\phi_\ell$. 
Observe from~\eqref{eqn:KEY-epos-serieS} that   $G_{\geq m+1}-G_{\geq m}=-(m-1)e_mz^m$. We therefore have 
\begin{align*}
    \phi_{\ell+1}&=\sum_{i=3}^{\ell+2} (i-2)e_i z^i +2 (D+G_{\geq \ell+3} ) 
    +\sum_{i=1}^{\ell-1} (D+G_{\ell+3-i} )X_{P_i}z^i\\
    \phi_{\ell}&=\sum_{i=3}^{\ell+1} (i-2)e_i z^i +2 (D+G_{\geq \ell+2} ) 
    +\sum_{i=1}^{\ell-2} (D+G_{\ell+2-i} ) X_{P_i}z^i
\end{align*}
and hence we obtain, for $\ell\ge 2$, 
\[\phi_{\ell+1}-\phi_{\ell}=\ell e_{\ell+2}z^{\ell+2} -2 (\ell+1) e_{\ell+2}z^{\ell+2} -\sum_{i=1}^{\ell-2} (\ell+1-i) e_{\ell+2-i} z^{\ell+2-i} X_{P_i}z^i +(D+G_{\ge 4}) X_{P_{\ell-1}} z^{\ell-1}.
\]
The path recurrence relation in~\Cref{prop:RECURSION-pathS-cycleS}\ref{RECURSION-pathS} tells us that 
\[X_{P_{\ell+2}}=(\ell+2)e_{\ell+2} +\sum_{j=1}^{\ell-2} (\ell+1-j) e_{\ell+2-j} X_{P_j} +2e_3 X_{P_{\ell-1}} + e_2 X_{P_\ell}.\]
Together with  $D+G_{\ge 4}=1-G_{\le 3}=1-e_2z^2-2e_3z^3$, this gives
\[\phi_{\ell+1}-\phi_{\ell}= (-X_{P_{\ell+2}} +2e_3 X_{P_{\ell-1}} + e_2 X_{P_\ell}) z^{\ell+2} + (1-e_2 z^2-2e_3 z^3) X_{P_{\ell-1}} z^{\ell-1}.
\]
The terms containing $e_3 X_{P_{\ell-1}}$ cancel, and the remaining expression coincides with the one for $f_{\ell+1}-f_\ell$ in~\eqref{eq: fell claim lhs-1}.
Hence $\phi_\ell$ and $f_\ell$ satisfy the same recurrence relation. Since their initial values also coincide, the claim follows.\qedhere
\end{proof}

The preceding efforts culminate in the following $e$-positivity result, as announced at the start of this section.
\begin{theorem}\label{thm:cX_P-f_ell-formula} For $\ell\geq 2$, we have the $e$-positive expansion 
\begin{equation}\label{eq:cX_p-f_ell-formula}
\cX_P f_\ell=\cX_P\sum_{i=3}^{\ell+1}(i-2)e_iz^i+2(E+\cX_P G_{\geq \ell+2})+\sum_{i=1}^{\ell-2}(E+\cX_P G_{\geq \ell+2-i})X_{P_i}z^i.\end{equation}
Hence the generating function $\sum_{n\geq \ell+1}X_{P_{n,\ell}}z^{n+1}$ is $e$-positive.
\end{theorem}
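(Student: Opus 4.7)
The plan is to obtain the displayed identity by a direct substitution into the expansion of $f_\ell$ already secured in \Cref{lemma: fl formula}, and then read off $e$-positivity term by term. Concretely, I would multiply both sides of
\[f_\ell(z)=\sum_{i=3}^{\ell+1}(i-2)e_iz^i+2(D+G_{\geq \ell+2})+\sum_{i=1}^{\ell-2}(D+G_{\geq \ell+2-i})X_{P_i}z^i\]
by $\cX_P(z)$, distribute, and use the fundamental identity $\cX_P(z)D(z)=E(z)$ from \eqref{eqn:gf-pathS-cycleS} to replace each occurrence of $\cX_P D$ by $E$. The term $2(D+G_{\geq \ell+2})$ then contributes $2E+2\cX_P G_{\geq \ell+2}$, and each summand $(D+G_{\geq \ell+2-i})X_{P_i}z^i$ becomes $(E+\cX_P G_{\geq \ell+2-i})X_{P_i}z^i$, yielding the expansion asserted in the theorem.

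Next, I would establish the \emph{manifest} $e$-positivity of the right-hand side by observing that each factor appearing in it is a nonnegative sum of products of $e$-positive series in $z$ with symmetric function coefficients: $\cX_P$ is $e$-positive by Stanley's classical result (\Cref{thm:RPS-gf-paths-cycles}); the polynomial $\sum_{i=3}^{\ell+1}(i-2)e_iz^i$ has nonnegative integer coefficients in the $e$-basis; $E(z)$ and each tail $G_{\geq m}(z)=\sum_{i\geq m}(i-1)e_iz^i$ are $e$-positive by their definitions in \eqref{eqn:KEY-epos-serieS}; and each $X_{P_i}$ is $e$-positive, again by \Cref{thm:RPS-gf-paths-cycles}. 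Since the product of two $e$-positive symmetric functions is $e$-positive (as can be verified directly from the definition of $e_\lambda$), $\cX_P f_\ell$ is $e$-positive.

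Finally, to deduce that the generating function $\sum_{n\geq \ell+1}X_{P_{n,\ell}}z^{n+1}$ is $e$-positive, I would invoke \Cref{cor:key-fact-fell-polyS}, which says exactly that the $e$-positivity of the family $\{X_{P_{n,\ell}}\}_{n\geq \ell+1}$ is equivalent to the $e$-positivity of $\cX_P f_\ell$. Equivalently, one can appeal directly to \Cref{prop:gf-twin-path-int-verticeS}, noting that the ``correction'' polynomial $g_\ell(z)$ has degree $\ell+1$ while the generating function has lowest degree $\ell+2$, so all of $g_\ell$ must cancel against the low-degree terms of $2\cX_P f_\ell$, leaving a power series with $e$-positive coefficients in every degree. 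There is no real obstacle here: all of the nontrivial work was front-loaded into the identity of \Cref{lemma: fl formula} and the reduction of \Cref{cor:key-fact-fell-polyS}, and the present step is essentially bookkeeping with the single substitution $\cX_P D = E$.
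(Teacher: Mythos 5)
Your proposal is correct and follows essentially the same route as the paper: multiply the expansion of $f_\ell$ from \Cref{lemma: fl formula} by $\cX_P$, substitute $\cX_P D = E$ to obtain \eqref{eq:cX_p-f_ell-formula}, observe that every factor on the right-hand side is $e$-positive, and conclude via the degree argument of \Cref{cor:key-fact-fell-polyS} (which the paper restates inline) that the generating function consists of the degree $\geq \ell+2$ part of the $e$-positive series $2\cX_P f_\ell$. The only difference is that you spell out explicitly that products of $e$-positive symmetric functions are $e$-positive, which the paper leaves implicit.
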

\begin{proof}
The expression for $f_\ell$ in~\Cref{lemma: fl formula} immediately allows us to conclude~\eqref{eq:cX_p-f_ell-formula}, using the fact that $\cX_P(z) D(z) =E(z)$.
    It then suffices to observe that the generating function $\sum_{n\geq \ell+1}X_{P_{n,\ell}}z^{n+1}$ is  comprised precisely of all the terms of degree $\geq \ell+2$ in the $e$-positive rational expression \begin{align*}
    2\cX_Pf_\ell&=\frac{2Ef_\ell}{D}\\
    &=\frac{\displaystyle{2E\left(\sum_{i=3}^{\ell+1}(i-2)e_iz^i+G_{\geq \ell+2}+\sum_{i=0}^{\ell-2}G_{\geq \ell+2-i}X_{P_i}z^i\right)}}{1-\sum_{i\geq 2}(i-1)e_iz^i}+2(1+E)\sum_{i=0}^{\ell-2}X_{P_i}z^i. \qedhere
\end{align*}
\end{proof}

From \Cref{thm:cX_P-f_ell-formula} and a tedious computation of $\cX_Pf_\ell+g_\ell$, we obtain the following cancellation-free $e$-positive expression for 
$\frac{1}{2}\sum_{n\geq \ell+1}X_{P_{n,\ell}}z^{n+1}$.  (Note that the sum is zero if the range of summation is empty.)

\begin{prop}\label{theorem:twin-path-epoS-vertex} 
For integers $n\geq 3$ and  $2\leq \ell\leq  n-1$, the twin $P_{n,\ell}$ of the path $P_n$ at the degree 2 vertex $\ell$  
is $e$-positive. In particular, we have 
\begin{align*}
  \frac{1}{2}  \sum_{n\geq \ell+1} X_{P_{n,\ell}} z^{n+1}
    &=\ell e_{\ell+1}z^{\ell+1}\lp\sum_{i=1}^{\ell-2}X_{P_i}z^i\rp+\sum_{i=3}^{\ell}(i-1)e_iz^i\lp\sum_{j=0}^{i-4}X_{P_{\ell-2-j}}z^{\ell-2-j}\rp+E_{\geq \ell+2}\\
    &\quad+E_{\geq\ell+ 2}\sum_{i=0}^{\ell-2}X_{P_i}z^i 
    +\lp \sum_{i\geq \ell-1}X_{P_i}z^i\rp \lp \sum_{i=2}^{\ell+1}(i-2)e_iz^i\rp  \\
    &\quad+2\cX_PG_{\geq \ell+2}+\cX_P\sum_{i=1}^{\ell-2}G_{\geq \ell+2-i} X_{P_i}z^i.
\end{align*}
\end{prop}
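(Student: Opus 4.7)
The plan is to combine Proposition~\ref{prop:gf-twin-path-int-verticeS} with Theorem~\ref{thm:cX_P-f_ell-formula}. By Proposition~\ref{prop:gf-twin-path-int-verticeS} we have
\[
\frac{1}{2}\sum_{n\geq \ell+1} X_{P_{n,\ell}} z^{n+1} \;=\; \cX_P(z)\,f_\ell(z) + g_\ell(z),
\]
and Theorem~\ref{thm:cX_P-f_ell-formula} gives an explicit $e$-positive expansion~\eqref{eq:cX_p-f_ell-formula} of $\cX_P f_\ell$. The first observation is that the two most complicated summands of the target, namely $2\cX_P G_{\geq \ell+2}$ and $\cX_P\sum_{i=1}^{\ell-2} G_{\geq \ell+2-i} X_{P_i}z^i$, already appear verbatim in~\eqref{eq:cX_p-f_ell-formula}. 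Thus it suffices to match the remaining pieces of $\cX_P f_\ell + g_\ell$ with the remaining five summands of the target formula.

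Next I would regroup the $E$-only contributions from~\eqref{eq:cX_p-f_ell-formula} by factoring,
\[
2E + \sum_{i=1}^{\ell-2} E\cdot X_{P_i}z^i \;=\; E\left(2+\sum_{i=1}^{\ell-2} X_{P_i}z^i\right) \;=\; E_{\leq \ell+1}\left(2+\sum_{i=1}^{\ell-2}X_{P_i}z^i\right) + E_{\geq \ell+2}\left(2+\sum_{i=1}^{\ell-2}X_{P_i}z^i\right).
\]
Since $X_{P_0}=1$, the high-degree piece $E_{\geq \ell+2}(2+\sum_{i=1}^{\ell-2}X_{P_i}z^i)$ equals $E_{\geq \ell+2}+E_{\geq \ell+2}\sum_{i=0}^{\ell-2}X_{P_i}z^i$, which is exactly the third and fourth summands of the target. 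Similarly, splitting $\cX_P = \sum_{i=0}^{\ell-2}X_{P_i}z^i + \sum_{i\ge \ell-1}X_{P_i}z^i$ in the term $\cX_P\sum_{i=3}^{\ell+1}(i-2)e_i z^i$ and observing that the $i=2$ contribution of $\sum_{i=2}^{\ell+1}(i-2)e_iz^i$ vanishes, the tail piece $\sum_{i\geq \ell-1}X_{P_i}z^i\sum_{i=2}^{\ell+1}(i-2)e_iz^i$ is precisely the fifth summand of the target.

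What remains is the polynomial identity
\[
\sum_{i=0}^{\ell-2}X_{P_i}z^i \sum_{j=3}^{\ell+1}(j-2)e_j z^j + E_{\leq \ell+1}\left(2+\sum_{i=1}^{\ell-2}X_{P_i}z^i\right) + g_\ell(z) \;=\; \ell e_{\ell+1}z^{\ell+1}\sum_{i=1}^{\ell-2}X_{P_i}z^i + \sum_{i=3}^{\ell}(i-1)e_iz^i\sum_{j=0}^{i-4}X_{P_{\ell-2-j}}z^{\ell-2-j}.
\]
To establish it, I would apply the path recurrence Proposition~\ref{prop:RECURSION-pathS-cycleS}(\ref{RECURSION-pathS}) to expand the factors $X_{P_{\ell+1}}$, $X_{P_\ell}$, and $X_{P_{\ell-1}}$ hidden inside $g_\ell$ as linear combinations of products $(j-1)e_j X_{P_{m}}$; the leading term $\ell e_{\ell+1}$ in the expansion of $X_{P_{\ell+1}}$ produces the first summand on the right, while the remaining cross-terms of the form $(j-1)e_j z^j \cdot X_{P_m}z^m$ should recombine with pieces of $E_{\le \ell+1}(2+\sum X_{P_i}z^i)$ and of $\sum_{i=0}^{\ell-2}X_{P_i}z^i\sum_{j=3}^{\ell+1}(j-2)e_j z^j$ to yield the second summand on the right.

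The main obstacle is the bookkeeping of these cancellations, since several double sums indexed in incompatible ways must be matched. A practical organization is to fix a bi-degree $(j,m)$ and verify that the net coefficient of $e_j z^j\cdot X_{P_m}z^m$ in LHS $-$ RHS is zero for every pair $(j,m)$ with $j+m\leq 2\ell-1$ (outside this range both sides vanish). Once this polynomial identity is verified, $e$-positivity of $\sum_{n\geq \ell+1}X_{P_{n,\ell}}z^{n+1}$ is immediate, since every summand of the target formula is a sum or product of manifestly $e$-positive quantities ($\cX_P$, $G_{\geq k}$, $E_{\geq k}$, and the individual chromatic symmetric functions $X_{P_i}$).
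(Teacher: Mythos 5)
Your proposal is correct and follows essentially the same route as the paper, which derives this proposition precisely by combining \Cref{prop:gf-twin-path-int-verticeS} with the expansion in \Cref{thm:cX_P-f_ell-formula} and then performing what it calls ``a tedious computation of $\cX_Pf_\ell+g_\ell$''; your identification of the verbatim terms $2\cX_P G_{\geq \ell+2}$ and $\cX_P\sum_{i=1}^{\ell-2}G_{\geq \ell+2-i}X_{P_i}z^i$, the splitting $E=E_{\le \ell+1}+E_{\ge \ell+2}$ and $\cX_P=\sum_{i=0}^{\ell-2}X_{P_i}z^i+\sum_{i\ge \ell-1}X_{P_i}z^i$, and the reduction to a bounded-degree residual identity are all sound (the residual identity checks out, e.g.\ for $\ell=2,3$ and in top degree, once $X_{P_{\ell-1}},X_{P_\ell},X_{P_{\ell+1}}$ are expanded via \Cref{prop:RECURSION-pathS-cycleS}). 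The only caveat is that your final bookkeeping step is sketched rather than executed --- mirroring the paper, which also leaves it implicit --- and when you check coefficients of the formal products $e_jz^j\cdot X_{P_m}z^m$ you should note that these products are not linearly independent as symmetric functions, so termwise cancellation is a sufficient (and here, achievable) condition rather than a necessary one.
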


\subsection{Generating function for twinned cycles}
\label{sec:gf-twin-cycle}

In this section, we  establish a new  result, the $e$-positivity of the chromatic symmetric function of the twinned cycle.
Again, our goal of obtaining an $e$-positive generating function for the twinned cycle will begin with a formula for the chromatic symmetric function of the twinned cycle, which is derived using the triple deletion formula.

We start by introducing two more families of graphs. 
Consider the twinned cycle graph $C_{n,v}$ where $v'$ is the twinned vertex of $v$ and $u$ and $w$ are the adjacent vertices to $v$ and $v'$.
Let $D_{n+1}$ be the graph obtained from $C_{n,v}$ by removing the edge $uv$ and let $\Tad_{n+1}$    
be the graph obtained from $C_{n,v}$ by removing the edges $uv$ and $vv'$.  
We illustrate these two definitions in~\Cref{fig: 3 graphs for twinned cycle}.

\begin{figure}[ht]
    \centering
        \begin{subfigure}[b]{0.3\textwidth}
    \centering
\begin{tikzpicture}[scale = 0.7]
\draw(0,2.7) -- (0,1.7);
\draw(0,2.7) -- (1.4,1.4);
\draw(0,1.7) -- (1.4,1.4);
\draw(1.4,1.4) -- (2,0);
\draw(2,0) -- (1.4,-1.4);
\draw(1.4,-1.4) -- (0.5,-2);
\draw(-0.6,-2) --(-1.4,-1.4);
\draw(-1.4,-1.4) -- (-2,0);
\draw(-2,0) -- (-1.4,1.4);
\draw(-1.4,1.4) -- (0,2.7);
\draw(-1.4,1.4) -- (0,1.7);
\node(1) at (0,2.7){$\bullet$};
\node(1twin) at (0,1.7){$\bullet$};
\node(2) at (1.4,1.4){$\bullet$};
\node(3) at (2,0){$\bullet$};
\node(4) at (1.4,-1.4){$\bullet$};
\node(5) at (0,-2){$\cdots$};
\node(8) at (-1.4,1.4){$\bullet$};
\node(7) at (-2,0){$\bullet$};
\node(6) at (-1.4,-1.4){$\bullet$};
\node[above, yshift = 2pt] at (1){$v$};
\node[below] at (1twin){$v'$};
\node[left, xshift = -2pt] at (8){$u$};
\node[right, xshift = 2pt] at (2){$w$};
\end{tikzpicture}
\caption{$C_{n,v}$}
\end{subfigure}
    \begin{subfigure}[b]{0.3\textwidth}
    \centering
\begin{tikzpicture}[scale = 0.7]
\draw(0,2.7) -- (0,1.7);
\draw(0,2.7) -- (1.4,1.4);
\draw(0,1.7) -- (1.4,1.4);
\draw(1.4,1.4) -- (2,0);
\draw(2,0) -- (1.4,-1.4);
\draw(1.4,-1.4) -- (0.5,-2);
\draw(-0.6,-2) --(-1.4,-1.4);
\draw(-1.4,-1.4) -- (-2,0);
\draw(-2,0) -- (-1.4,1.4);
\draw(-1.4,1.4) -- (0,1.7);
\node(1) at (0,2.7){$\bullet$};
\node(1twin) at (0,1.7){$\bullet$};
\node(2) at (1.4,1.4){$\bullet$};
\node(3) at (2,0){$\bullet$};
\node(4) at (1.4,-1.4){$\bullet$};
\node(5) at (0,-2){$\cdots$};
\node(8) at (-1.4,1.4){$\bullet$};
\node(7) at (-2,0){$\bullet$};
\node(6) at (-1.4,-1.4){$\bullet$};
\node[above, yshift = 2pt] at (1){$v$};
\node[below] at (1twin){$v'$};
\node[left, xshift = -2pt] at (8){$u$};
\node[right, xshift = 2pt] at (2){$w$};
\end{tikzpicture}
\caption{$D_{n+1}$}
\end{subfigure}
\begin{subfigure}[b]{.3\textwidth}
\centering
\begin{tikzpicture}[scale = 0.7]
\draw(0,2.7) -- (1.4,1.4);
\draw(0,1.7) -- (1.4,1.4);
\draw(1.4,1.4) -- (2,0);
\draw(2,0) -- (1.4,-1.4);
\draw(1.4,-1.4) -- (0.5,-2);
\draw(-0.6,-2) --(-1.4,-1.4);
\draw(-1.4,-1.4) -- (-2,0);
\draw(-2,0) -- (-1.4,1.4);
\draw(-1.4,1.4) -- (0,1.7);
\node(1) at (0,2.7){$\bullet$};
\node(1twin) at (0,1.7){$\bullet$};
\node(2) at (1.4,1.4){$\bullet$};
\node(3) at (2,0){$\bullet$};
\node(4) at (1.4,-1.4){$\bullet$};
\node(5) at (0,-2){$\cdots$};
\node(8) at (-1.4,1.4){$\bullet$};
\node(7) at (-2,0){$\bullet$};
\node(6) at (-1.4,-1.4){$\bullet$};
\node[above, yshift = 2pt] at (1){$v$};
\node[below] at (1twin){$v'$};
\node[left, xshift = -2pt] at (8){$u$};
\node[right, xshift = 2pt] at (2){$w$};
\end{tikzpicture}
\caption{$\Tad_{n+1}$}
\end{subfigure}
    \caption{}
    \label{fig: 3 graphs for twinned cycle}
\end{figure}
\begin{lemma}\label{lemma:twin-cycle-TD}  
For $n\geq 3$: 
\begin{equation*}
X_{C_{n,v}}
   =4 X_{C_{n+1}}+2 e_1 X_{C_n} -6X_{P_{n+1}} +2e_2 X_{P_{n-1}}.
   \end{equation*}
\end{lemma}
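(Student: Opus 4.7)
The plan is to apply the triple deletion formulas (\Cref{prop:Triple-deletion} and \Cref{cor:Triple-deletion-almost-triangles}) iteratively, using the two triangles $\{v,v',u\}$ and $\{v,v',w\}$ in $C_{n,v}$ together with the twin symmetry exchanging $v$ and $v'$, to rewrite $X_{C_{n,v}}$ in terms of cycles, paths, and the auxiliary tadpole graph.

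First, I will apply \Cref{prop:Triple-deletion} to the triangle $\{v,v',u\}$ using the edges $vu$ and $v'u$. Both single-edge deletions yield the graph $D_{n+1}$ by twin symmetry, while the double deletion disconnects $u$ from both twins but leaves the triangle $\{v,v',w\}$ intact, attached via $w$ to a path of length $n-2$ running to $u$. A second application of the triple deletion formula to that remaining triangle (using the edges $vw$ and $v'w$) again exploits twin symmetry to produce two copies of $P_{n+1}$, along with the disjoint union $P_{n-1}\sqcup P_2$ from the double deletion. Combining, using $X_{P_2}=2e_2$, gives
\[X_{C_{n,v}} = 2X_{D_{n+1}} - 2X_{P_{n+1}} + 2e_2 X_{P_{n-1}}.\]

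Next, I will reduce $X_{D_{n+1}}$ via \Cref{prop:Triple-deletion} applied to its triangle $\{v,v',w\}$ with edges $vw$ and $v'w$. Twin symmetry is broken here by the earlier removal of $vu$: deleting $vw$ leaves $v$ as a pendant of $v'$ on an $n$-cycle, producing $\Tad_{n+1}$; deleting $v'w$ reconnects the remaining structure into a single cycle $C_{n+1}$; and the double deletion unrolls into $P_{n+1}$. Thus
\[X_{D_{n+1}} = X_{\Tad_{n+1}} + X_{C_{n+1}} - X_{P_{n+1}}.\]

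The final ingredient is a formula for $X_{\Tad_{n+1}}$. Since the tadpole has no triangle, I will invoke \Cref{cor:Triple-deletion-almost-triangles} at the cycle vertex $c_1$ (carrying the leaf $\ell$), taking $v_1=c_n$ and $v_2=\ell$ with non-edge $c_n\ell$. One operation splices the leaf into a larger cycle to produce $C_{n+1}$; the deletion of the pendant edge $c_1\ell$ leaves $C_n$ together with an isolated vertex, contributing $e_1 X_{C_n}$; the combined operation yields $P_{n+1}$. Hence $X_{\Tad_{n+1}} = X_{C_{n+1}} + e_1 X_{C_n} - X_{P_{n+1}}$. Substituting into the expression for $X_{D_{n+1}}$ and then into the one for $X_{C_{n,v}}$ produces the claimed identity. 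The main obstacle will be purely bookkeeping: correctly identifying each intermediate graph after edge deletions and additions, and noting at which steps the twin symmetry does and does not apply.
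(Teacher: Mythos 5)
Your proposal is correct and follows essentially the same route as the paper: triple deletion at the triangle $\{u,v,v'\}$ to reduce $X_{C_{n,v}}$ to $2X_{D_{n+1}}-2X_{P_{n+1}}+2e_2X_{P_{n-1}}$, followed by a reduction of $X_{D_{n+1}}$ to $X_{\Tad_{n+1}}$, $X_{C_{n+1}}$, and $X_{P_{n+1}}$. The only (harmless) difference is in how you obtain $X_{\Tad_{n+1}}=X_{C_{n+1}}+e_1X_{C_n}-X_{P_{n+1}}$: you apply \Cref{cor:Triple-deletion-almost-triangles} directly to the triangle-free tadpole, whereas the paper extracts the same identity by applying \Cref{prop:Triple-deletion} to $D_{n+1}$ with two different choices of edge pairs and subtracting.
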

\begin{proof} 
Consider $n\geq 3$. By the triple deletion argument applied to $\epsilon_1=uv$ and $\epsilon_2=uv'$, we get that
\begin{equation}\label{eqn:2ndL-triple-deletion-cycle}
X_{C_{n,v}}=2 X_{D_{n+1}} -X_{P_{n,v}}=2 X_{D_{n+1}} -2X_{P_{n+1}}+X_{P_2}X_{P_{n-1}}.
\end{equation}
In $D_{n+1}$, applying the triple deletion argument to $\epsilon_{1}=vw$ and $\epsilon_2=vv'$ gives
\begin{equation}\label{eqn:1st-triple-deletion-D1}
X_{D_{n+1}} =2X_{\Tad_{n+1}}-e_1 X_{C_n},
\end{equation}
while applying the triple deletion argument to $\epsilon_{1}=vw$ and $\epsilon_2=v'w$ gives
\begin{equation*}
X_{D_{n+1}} =X_{\Tad_{n+1}}+X_{C_{n+1}}-X_{P_{n+1}}.
\end{equation*}
Subtracting both expressions for $X_{D_{n+1}}$ we obtain that
\begin{equation*}
X_{\Tad_{n+1}}=X_{C_{n+1}}+e_1 X_{C_n} -X_{P_{n+1}},
\end{equation*}
and therefore
\begin{equation}\label{eqn:D1}
X_{D_{n+1}}=2X_{C_{n+1}}+e_1 X_{C_n} -2X_{P_{n+1}}.
\end{equation}

Finally, putting together~\eqref{eqn:2ndL-triple-deletion-cycle} and~\eqref{eqn:D1}, we have
\begin{equation*}
   X_{C_{n,v}}
   =4 X_{C_{n+1}}+2 e_1 X_{C_n} -6X_{P_{n+1}} +2e_2 X_{P_{n-1}},
   \end{equation*}
   as claimed.  
   \end{proof}

Let $\cX_{C_v}$ be the generating function for the twinned cycle, that is, $\cX_{C_v}(z) := \sum_{n\ge 3} X_{C_{n,v}} z^{n+1}$. By~\Cref{lemma:twin-cycle-TD} we have the following expression for $\cX_{C_v}$.
   \begin{cor}\label{cor:twin-cycle-TD-GF}
    The generating function of the twinned cycle can be written as 
\begin{equation*}
\cX_{C_v}(z) 
=2(2+e_1z)\cX_C-2(3-e_2z^2)\cX_P +6(1+e_1z) +2e_2z^2 -6e_3z^3 .
\end{equation*}   
   \end{cor}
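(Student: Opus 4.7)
The plan is to derive the generating function identity directly from the recurrence in \Cref{lemma:twin-cycle-TD} by multiplying both sides by $z^{n+1}$ and summing over $n\geq 3$, then carefully bookkeeping the low-degree correction terms that arise from index shifts.

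Concretely, I would start from the identity
\[X_{C_{n,v}} = 4X_{C_{n+1}} + 2e_1 X_{C_n} - 6X_{P_{n+1}} + 2e_2 X_{P_{n-1}} \qquad (n\geq 3),\]
and split the work into four separate generating-function computations, one for each term on the right. The left-hand side, by definition, gives $\cX_{C_v}(z)$. For the cycle terms, after the substitution $m=n+1$ (resp.\ $m=n$), I obtain
\[\sum_{n\geq 3} 4 X_{C_{n+1}} z^{n+1} = 4\bigl(\cX_C(z) - X_{C_2}z^2 - X_{C_3}z^3\bigr), \qquad \sum_{n\geq 3} 2e_1 X_{C_n} z^{n+1} = 2e_1 z\bigl(\cX_C(z) - X_{C_2} z^2\bigr),\]
and these combine to produce the desired $2(2+e_1 z)\cX_C(z)$ plus low-degree correction terms, using $X_{C_2}=2e_2$ and $X_{C_3}=6e_3$.

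Similarly, for the path terms I shift indices to obtain
\[\sum_{n\geq 3} -6 X_{P_{n+1}} z^{n+1} = -6\bigl(\cX_P(z) - 1 - e_1 z - 2e_2 z^2 - X_{P_3} z^3\bigr),\qquad \sum_{n\geq 3} 2e_2 X_{P_{n-1}} z^{n+1} = 2e_2 z^2\bigl(\cX_P(z) - 1 - e_1 z\bigr),\]
using the initial values $X_{P_0}=1$, $X_{P_1}=e_1$, $X_{P_2}=2e_2$, and $X_{P_3}=3e_3+e_1 e_2$ (this last from \Cref{prop:RECURSION-pathS-cycleS}). These combine to give $-2(3-e_2z^2)\cX_P(z)$ plus further correction terms.

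The final step is to add the four contributions and verify that all the correction terms collapse to exactly $6(1+e_1 z) + 2e_2 z^2 - 6e_3 z^3$. This is where the only real care is needed: one must track the constant, $z$, $z^2$, $z^3$, and $e_1 e_2 z^3$ coefficients, and the key cancellations are $-24 e_3 + 18 e_3 = -6 e_3$, $-8 e_2 + 12 e_2 - 2 e_2 = 2e_2$, and $-4 e_1 e_2 + 6 e_1 e_2 - 2e_1 e_2 = 0$, with the constant and $e_1 z$ contributions coming entirely from the $-6X_{P_{n+1}}$ telescoping. This routine bookkeeping is the only place the argument could go wrong, but since the claimed identity is precisely a generating-function transcription of \Cref{lemma:twin-cycle-TD}, these cancellations are forced.
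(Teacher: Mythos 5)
Your proposal is correct and follows essentially the same route as the paper: the paper's proof also multiplies the identity of \Cref{lemma:twin-cycle-TD} by $z^{n+1}$, sums over $n\ge 3$, and accounts for the same initial terms ($X_{C_2}=2e_2$, $X_{C_3}=6e_3$, $X_{P_0}=1$, $X_{P_1}=e_1$, $X_{P_2}=2e_2$, $X_{P_3}=e_1e_2+3e_3$). Your four index-shifted sums match the paper's items (a)--(d) exactly, and the stated cancellations are the right ones.
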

   \begin{proof}
   The generating function  follows by multiplying the formula in~\Cref{lemma:twin-cycle-TD} by $z^{n+1}$ and summing over all $n\ge 3$. In particular, taking into account the initial terms that do not appear and using the initial values 
   $X_{C_1}=0, X_{P_1}=e_1, X_{C_2}=2e_2=X_{P_2},$ $X_{C_3}=6e_3,$ and $ X_{P_3}=e_2e_1+3e_3$, we get the following expressions in terms of  the generating functions for the cycle and the path:
   \begin{enumerate}[(a)]
   \itemsep0.3cm
   \item
    $4\sum_{n\ge 3} X_{C_{n+1}} z^{n+1}=4(\cX_C-z^2X_{C_2}-z^3 X_{C_3})$, 
    \item
   $2(e_1z)\sum_{n\ge 3} X_{C_n} z^n=2e_1z(\cX_C-z^2 X_{C_2})$,
   \item
   $6\sum_{n\ge 3} X_{P_{n+1}} z^{n+1}=6(\cX_P-1-z X_{P_1}-z^2 X_{P_2} -z^3 X_{P_3})$, and  
   \item
   $2e_2z^2 \sum_{n\ge 3} X_{P_{n-1}}z^{n-1}=2e_2z^2 (\cX_P-1-z X_{P_1})$.  
   \end{enumerate}
   Putting all this together gives the generating function as stated. \qedhere
   \end{proof}

Consider the following  $e$-positive generating functions that appear in the proof of~\Cref{lem:Ez-identitieS}:
\[F_2=\sum_{i\ge 3} (2i^2-5i) e_i z^i\qquad \text{and} \qquad F_3 = \sum_{i\ge 4} [(i-1)(i-3)] e_i z^i.\]
Our goal is to show that the expression given in~\Cref{{cor:twin-cycle-TD-GF}} is indeed $e$-positive.

\begin{lemma}\label{lem:Rewrite-Cnv-to-find-coefficients}
The twinned cycle generating function, scaled by $\frac12$, can be written as \[
\frac12 \cX_{C_{v}} = \frac{1}{D(z)} [F_2+e_1zF_3 +e_2z^2 (E-1-e_1z)  ]-\frac{e_2z^2}{D(z)} +e_2z^2  -3 e_3z^3 .
\]
\end{lemma}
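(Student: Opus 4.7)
The plan is to take the expression for $\cX_{C_v}$ from~\Cref{cor:twin-cycle-TD-GF}, substitute the generating functions $\cX_P = E/D$ and $\cX_C = z^2 E''/D$ from~\eqref{eqn:gf-pathS-cycleS}, and put everything over the common denominator $D(z)$. This reduces the claim to a polynomial identity in the formal power series $E, E', E''$ that can be verified directly using the defining relation $D = E - zE'$.

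After the substitution, the left-hand side becomes
\[\frac{1}{2}\cX_{C_v} = \frac{(2+e_1z)\,z^2E''(z) - (3-e_2z^2)E(z)}{D(z)} + 3(1+e_1z) + e_2z^2 - 3e_3z^3.\]
For the right-hand side of the claim, I would invoke the explicit identities developed in the proof of~\Cref{lem:Ez-identitieS}(a), namely $F_2 = 2z^2E''-3zE'+3e_1z+2e_2z^2$ and $F_3 = z^2E''-3zE'+3E-3+e_2z^2$. Expanding the bracketed numerator $F_2 + e_1zF_3 + e_2z^2(E-1-e_1z) - e_2z^2$ and collecting like terms (the $\pm 3e_1z$ constants and the $\pm e_1e_2z^3$ cubics cancel, and the $2e_2z^2$ from $F_2$ is canceled by the two subtracted $e_2z^2$ terms), one obtains
\[(2+e_1z)\,z^2E'' - 3(1+e_1z)zE' + 3e_1zE + e_2z^2E.\]

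The key step is then to absorb the $zE'$ term using $zE' = E - D$: this rewrites $-3(1+e_1z)zE'$ as $-3(1+e_1z)E + 3(1+e_1z)D$, so the sum collapses into $(2+e_1z)z^2E'' - (3 - e_2z^2)E + 3(1+e_1z)D$, since $-3(1+e_1z) + 3e_1z + e_2z^2 = -3 + e_2z^2$. Dividing by $D$ converts the $3(1+e_1z)D$ term into $3(1+e_1z)$, exactly matching the constant term on the left-hand side, and the remaining fraction matches as well. Adding back the $-\frac{e_2z^2}{D} + e_2z^2 - 3e_3z^3$ contributions to both sides completes the verification.

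There is no essential obstacle beyond careful bookkeeping of the $e_1z$, $e_2z^2$, and $e_1e_2z^3$ contributions; the identity is entirely formal once one recognizes that $F_2$ and $F_3$ are designed to encode the combinations $2z^2E''-3zE'$ and $z^2E''-3zE'+3E$ respectively, which are precisely the pieces appearing in $(2+e_1z)z^2E'' - (3-e_2z^2)E$ after using $zE'=E-D$.
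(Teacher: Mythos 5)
Your proposal is correct and follows essentially the same route as the paper: both start from \Cref{cor:twin-cycle-TD-GF}, substitute $\cX_P=E/D$ and $\cX_C=z^2E''/D$, and reduce the claim to the identities for $F_2$ and $F_3$ from the proof of \Cref{lem:Ez-identitieS} together with the relation $zE'=E-D$ (which the paper packages as \Cref{lem:Ez-identitieS}(b)(i)). The only difference is direction of travel --- you verify the stated right-hand side by expanding it back, while the paper derives it forward --- and your bookkeeping of the cancelling $e_1z$, $e_2z^2$, and $e_1e_2z^3$ terms is accurate.
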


\begin{proof}
By~\Cref{cor:twin-cycle-TD-GF},
\begin{equation}\label{eqn:gf-twin-cycleS2}
\frac{1}{2}\cX_{C_{v}}=(2+e_1z)\cX_C -3(\cX_P-1-e_1z)+e_2z^2(\cX_P+1) -3e_3z^3.
\end{equation}
From the proof of~\Cref{lem:Ez-identitieS}, we have the following \[ \cX_P-(1+e_1z)=\dfrac{z(1+e_1z)E'(z)-e_1zE(z)}{D(z)}\quad\text{and}\quad \cX_C=\frac{z^2E''(z)}{D(z)},\]
and by definition $\cX_P(z) = \frac{E(z)}{D(z)}$ and $\cX_C(z) = \frac{z^2 E''(z)}{D(z)}$.  Substituting these into~\eqref{eqn:gf-twin-cycleS2}, we get
\begin{align*}
 \frac{1}{2} \cX_{C_{v}} &=\frac{1}{D(z)} [(2+e_1z)z^2E''(z) -3(zE'(z)+z^2 e_1 E'(z) -e_1zE(z) ) +e_2z^2 E(z)] +e_2z^2  -3e_3z^3 \\
&= \frac{1}{D(z)} [(2 z^2 E''-3zE') +e_1z (z^2 E''-3e_1z E' +3E) +e_2z^2  E(z) ]+e_2z^2  -3e_3z^3 \\
&=\frac{1}{D(z)} [F_2-3e_1z-2e_2z^2+e_1z(F_3-e_2z^2+3) +e_2z^2  E(z) ]+e_2z^2  -3e_3z^3,
\end{align*}
where the final equality comes from~\Cref{lem:Ez-identitieS}. The statement then follows after further algebraic manipulations. 
\end{proof}

\begin{theorem}\label{thm:twin-cycle-genfunction-rational-expression}
    The  generating function for $\frac{1}{2}X_{C_{n,v}}$ has the following $e$-positive rational expression:
    \[\frac{1}{2}\sum_{n\geq 3}X_{C_{n,v}}z^{n+1}=\sum_{i\geq 4}(2i^2-5i)e_iz^i+\frac{e_1 z F_3 + F_2\sum_{i\ge 3} (i-1) e_i z^i
+ e_2z^2 \sum_{i\ge 3} (2i^2-6 i+2)e_i z^i}{1-\sum_{i\geq 2}(i-1)e_iz^i}.\]
\end{theorem}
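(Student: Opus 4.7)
My plan is to prove this theorem by direct algebraic manipulation of the formula for $\frac12 \cX_{C_v}$ given in Lemma~\ref{lem:Rewrite-Cnv-to-find-coefficients}, bringing it into the claimed rational form step by step. First, I combine the two stray $e_2z^2$ terms using $1-\frac{1}{D(z)} = -\frac{G(z)}{D(z)}$, which gives $-\frac{e_2z^2}{D(z)}+e_2z^2 = -\frac{e_2z^2 G(z)}{D(z)}$. Next, the $e_2z^2(E-1-e_1z)$ summand in the numerator can be merged with this contribution via the identity $E-1-e_1z-G(z) = -\sum_{i\ge 3}(i-2)e_iz^i$, immediate from the definitions in~\eqref{eqn:KEY-epos-serieS}. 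After these two steps the expression reduces to
\[\tfrac12 \cX_{C_v} = \frac{F_2 + e_1zF_3 - e_2z^2\sum_{i\ge 3}(i-2)e_iz^i}{D(z)} - 3e_3z^3.\]

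Next, to pull out the polynomial summand $\sum_{i\ge 4}(2i^2-5i)e_iz^i = F_2 - 3e_3z^3$, I apply the standard identity $\frac{F_2}{D(z)} = F_2 + \frac{F_2 G(z)}{D(z)}$. Splitting $G(z) = e_2z^2 + G_{\ge 3}(z)$ gives $F_2 G(z) = F_2\cdot e_2z^2 + F_2 G_{\ge 3}(z)$, and combining the $F_2 \cdot e_2 z^2$ piece with $-e_2z^2\sum_{i\ge 3}(i-2)e_iz^i$ produces the desired coefficient via the identity $(2i^2-5i)-(i-2)=2i^2-6i+2$. The $-3e_3z^3$ outside the rational expression then combines with $F_2$ to produce exactly the polynomial prefix $\sum_{i\ge 4}(2i^2-5i)e_iz^i$. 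Matching numerators verifies the identity claimed in the theorem.

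Finally, the $e$-positivity is immediate once the expression is in this form: the polynomial prefix has positive coefficients since $2i^2-5i=i(2i-5)>0$ for $i\ge 4$; in the numerator of the rational term, $F_3=\sum_{i\ge 4}(i-1)(i-3)e_iz^i$ is $e$-positive, $F_2 G_{\ge 3}(z)$ is a product of $e$-positive series, and $2i^2-6i+2\ge 2$ for $i\ge 3$; finally $\frac{1}{D(z)} = \sum_{k\ge 0}G(z)^k$ is $e$-positive by~\Cref{lem:expandS-1overDz}. I expect no serious obstacle: the main technical point is the careful bookkeeping of the cancellations, and in particular recognizing that the identity $F_2/D(z) = F_2 + F_2 G(z)/D(z)$ is exactly what isolates the finite polynomial piece $\sum_{i\ge 4}(2i^2-5i)e_iz^i$ in the final answer.
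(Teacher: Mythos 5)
Your proposal is correct and follows essentially the same route as the paper: both start from \Cref{lem:Rewrite-Cnv-to-find-coefficients} and regroup the $e_2z^2$ terms, split off $F_2-3e_3z^3$ via $\tfrac{1}{D(z)}=1+\tfrac{G(z)}{D(z)}$, and use $G(z)=e_2z^2+G_{\ge 3}(z)$ together with $(2i^2-5i)-(i-2)=2i^2-6i+2$ to match the stated numerator. The only (cosmetic) difference is that the paper carries out the middle regrouping by expanding $\tfrac{1}{D(z)}=\sum_{k\ge 0}G(z)^k$ and factoring out $G^{k-1}$, whereas you use the equivalent identity $1-\tfrac{1}{D(z)}=-\tfrac{G(z)}{D(z)}$ directly.
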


\begin{proof}
    Let $E_{\geq 2}:=E-1-e_1z=\sum_{i\geq 2}e_iz^i$. Then by~\Cref{lem:Rewrite-Cnv-to-find-coefficients}, we have 
    \begin{align}
        \frac{1}{2}\cX_{C_{v}}&=\frac{1}{D(z)}\lp F_2+e_1zF_3+e_2z^2E_{\geq 2}\rp-\frac{e_2z^2}{D(z)}+e_2z^2-3e_3z^3\nonumber\\
        &=F_2-3e_3z^3+\frac{e_1zF_3}{D(z)}+\Bigg[F_2\left(\frac{1}{D(z)}-1\right)+\frac{e_2z^2E_{\geq 2}}{D(z)}-\frac{e_2z^2}{D(z)}+e_2z^2\Bigg].\label{eq:thm-ratl-gen-funct-1}
    \end{align}
    The first two terms in~\eqref{eq:thm-ratl-gen-funct-1} can be written as:
    \begin{equation}\label{eq:thm-ratl-gen-funct-2}
        F_2-3e_3z^3=\sum_{i\geq 4}(2i^2-5i)e_iz^i.
    \end{equation} For the generating function in the brackets of~\eqref{eq:thm-ratl-gen-funct-1}, we have
    \begin{align}
        &F_2\left(\frac{1}{D(z)}-1\right)+\frac{e_2z^2E_{\geq 2}}{D(z)}-\frac{e_2z^2}{D(z)}+e_2z^2\nonumber\\
        &=F_2\sum_{k\geq 1}G^k+e_2z^2E_{\geq 2}\sum_{k\geq 0}G^k-e_2z^2\sum_{k\geq 1}G^k \nonumber \\
        &=F_2\sum_{k\geq 1}G^k+e_2z^2E_{\geq 2}\sum_{k\geq 1}G^{k-1}-e_2z^2\sum_{k\geq 1}G^k \nonumber \\
        &=\sum_{k\geq 1}G^{k-1}\lp GF_2-e_2z^2(G-E_{\geq 2})\rp  \nonumber\\
        &=\sum_{k\geq 1}G^{k-1}\lp GF_2-e_2z^2F_2+e_2z^2F_2-e_2z^2\sum_{i\geq 3}(i-2)e_iz^i\rp\nonumber \\ 
        &=\sum_{k\geq 1}G^{k-1}\lp F_2 G_{\geq 3}+e_2z^2 \sum_{i\geq 3}(2i^2-6i+2)e_iz^i\rp  \nonumber\\
        &=\frac{ F_2 G_{\geq 3}+e_2z^2 \sum_{i\geq 3}(2i^2-6i+2)e_iz^i}{D(z)},\label{eq:thm-ratl-gen-funct-3}
    \end{align}
    where the penultimate equality follows from the definitions of  $G_{\geq 3}$ and $F_2$. Combining~\eqref{eq:thm-ratl-gen-funct-1},~\eqref{eq:thm-ratl-gen-funct-2}, and~\eqref{eq:thm-ratl-gen-funct-3}, we obtain the desired expression.
\end{proof}
From the generating function in~\Cref{thm:twin-cycle-genfunction-rational-expression}, we can readily extract the $e$-coefficients of $X_{C_{n,v}}.$ 
\begin{cor}\label{cor:e-coefficients-twinned-cycle}
Let $\lambda$ be a partition of $k\ge 3$, $\lambda=\left\langle 1^{m_1},2^{m_2},\dots,k^{m_{k}}\right\rangle$,  
     and let $c_\lambda$ be the coefficient of $e_{\lambda}z^{|\lambda|}$ in $\frac{1}{2}\cX_{C_{v}}$. 
    We have the following list of expressions for the coefficients:
    \begin{enumerate}[(a)]
    \item    $c_{(k)}=k(2k-5)$.
    \item If $m_1>1$, then $c_\lambda=0.$
    \item If $m_1=1$ and $\lambda=\mu\cup 1$ (so that $m_1(\mu)=0)$, then
    \[c_\lambda=\sum_{\substack{i\ge 4\\i\in\supp(\mu)}} (i-1)(i-3)\eps(\mu-i).
\]
Note that this is 0 unless $\lambda$ has a part of size at least $4$.
\item If $m_1=m_2=0$, then 
\[c_\lambda=\sum_{a\in\supp(\lambda)} (2a^2-5a) \eps(\lambda-a).\]
\item If $m_1=0$ and $m_2=\ell(\lambda)$, then $c_\lambda=0$.
\item If $m_1=0$ and $1\leq m_2<\ell(\lambda)$, then  
\[c_\lambda=\sum_{\substack{a,b\geq 3\\(a,b)\in \supp(\lambda)}} \eps(\lambda-a-b)(2a^2-5a)(b-1)+\sum_{\substack{c\geq 3\\c\in \supp(\lambda)}}\eps(\lambda-c-2)(2c^2-6c+2)\]
where $(a,b)\in \supp(\lambda)$ means both $a$ and $b$ are in $\supp(\lambda)$ if $a\neq b$, and $m_a\geq 2$ if $a=b$.
\end{enumerate}
\end{cor}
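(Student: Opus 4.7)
The plan is to extract the coefficient of $e_\lambda z^{|\lambda|}$ from the $e$-positive rational expression in \Cref{thm:twin-cycle-genfunction-rational-expression}, using \Cref{lem:expandS-1overDz} to write $\dfrac{1}{D(z)}=\sum_\nu\eps(\nu)e_\nu z^{|\nu|}$. The key preliminary observation, used throughout, is that by \Cref{remark:eps(lambda)-is-nonegative-integer}, $\eps(\nu)=0$ whenever $1\in\supp(\nu)$; hence any part of size $1$ in $\lambda$ must be supplied explicitly by a numerator term, not by the expansion of $1/D(z)$. Writing the numerator as $N(z)=e_1zF_3(z)+F_2(z)G_{\ge 3}(z)+e_2z^2\sum_{i\geq 3}(2i^2-6i+2)e_iz^i$, I would then classify $\lambda$ by its multiplicities $m_1$ and $m_2$ and identify which of the three numerator summands can contribute.

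For (a), (b), (c): the only summand of $N(z)$ containing $e_1$ is $e_1zF_3$, so $\lambda$ contains at most one part of size $1$, proving (b); and if $m_1(\lambda)=1$ with $\lambda=\mu\cup 1$, only $e_1zF_3/D(z)$ contributes, and matching $e_1 e_i e_\nu=e_{1\cup\mu}$ with $i\geq 4$ and $\nu=\mu-i$ yields the formula in (c). Since every summand of $N(z)$ is a product of two non-constant series, no single-part monomial $e_k$ can arise from $N(z)/D(z)$; thus only the polynomial $\sum_{i\geq 4}(2i^2-5i)e_iz^i$ contributes to $c_{(k)}$, giving (a).

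For (d), (e), (f), the term $e_1zF_3/D(z)$ drops out since $m_1=0$. Case (e), $\lambda=(2^k)$, is immediate because $F_2G_{\geq 3}$ requires two parts of size $\geq 3$ and $e_2z^2\sum(\ldots)$ requires one, both impossible for $(2^k)$. For (d), with $m_1=m_2=0$, only $F_2G_{\geq 3}/D(z)$ contributes for partitions with $\ell(\lambda)\geq 2$; direct extraction yields the ordered-pair double sum
\[\sum_{\substack{a,b\geq 3\\ a\in\supp(\lambda),\ b\in\supp(\lambda-a)}}(2a^2-5a)(b-1)\,\eps(\lambda-a-b),\]
which I would collapse to $\sum_{a\in\supp(\lambda)}(2a^2-5a)\eps(\lambda-a)$ by applying \Cref{lem:eps-propertieS}(b) to the inner $b$-sum, using that $\supp(\lambda-a)\subseteq\{3,4,\ldots\}$ under this hypothesis; the single-part case $\lambda=(k)$ is then absorbed by the same formula via $\eps(\emptyset)=1$, matching the polynomial contribution. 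For (f), both $F_2G_{\geq 3}/D(z)$ and $e_2z^2\sum_{c\geq 3}(2c^2-6c+2)e_cz^c/D(z)$ now contribute; direct extraction of each produces, respectively, the first and second sums in the stated formula.

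The main obstacle is the collapse in case (d): recognizing the inner $b$-sum as $\eps(\lambda-a)$ via \Cref{lem:eps-propertieS}(b), and verifying that the hypothesis $m_1=m_2=0$ is exactly what makes the sum over $b\in\supp(\lambda-a)$ range over parts $\geq 3$ with no missing or spurious terms. Everything else is careful bookkeeping of ordered pairs when $a=b$ (so that parts with multiplicity are not double-counted), together with the routine observation that each of the three summands in $N(z)$ is blocked from contributing to certain shapes of $\lambda$ by the vanishing of $\eps$ on partitions containing $1$.
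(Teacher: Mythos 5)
Your proposal is correct and matches the paper's (omitted) argument: the paper simply asserts that the coefficients are ``readily extracted'' from the rational expression in \Cref{thm:twin-cycle-genfunction-rational-expression}, and your extraction --- expanding $1/D(z)$ via \Cref{lem:expandS-1overDz}, using that $\eps$ vanishes on partitions containing a part $1$, and collapsing the double sum in case (d) with \Cref{lem:eps-propertieS}(b) --- is exactly the intended computation. The only caveat is with the statement itself rather than your proof: your observation that $N(z)/D(z)$ contributes nothing to single-part monomials correctly gives $c_{(3)}=0$ (the series starts at $z^4$), so part (a) should really be read as holding for $k\ge 4$.
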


\section{\texorpdfstring{$e$}{e}-positivity via recurrences}\label{sec:recurrences}

In this section, we reprove several $e$-positivity results for certain classes of graphs by exhibiting an $e$-positive recurrence relation. The recurrence relations for paths and cycles from~\Cref{{prop:RECURSION-pathS-cycleS}} serve as the model for those of this section, and in fact will play a key role in our derivations. 
We will also need explicit expressions for some coefficients, which are readily 
extracted from~\Cref{{prop:RECURSION-pathS-cycleS}}. These are recorded  in the next result.
\begin{cor}\label{cor:coeffS-RECURSION-pathS-cycleS} 
Given a graph $G$, let $[e_\lambda]X_{G}$ denote the coefficient of $e_\lambda$ in the chromatic symmetric function of $G$, $X_G$. 
\begin{itemize}
\itemsep0.25cm
    \item For $n\ge 2$, $[e_n]X_{P_n}=n$, $[e_{n-1}e_1]X_{P_n}=n-2$, and $[e_n]X_{C_n}=n(n-1)$.
    \item For $n\ge 5$, $[e_{n-2}e_2]X_{P_n}=3n-8$ and $[e_{n-2}e_2]X_{C_n}=n(n-3)$.
    \item For $k\ge 2$ and $r\geq 1$, $[(e_{k})^r]X_{P_{kr}}=k(k-1)^{r-1}$ and $[e_{(k^r)}]X_{C_{kr}}=k(k-1)^{r}$.
    \item $[e_2^2]X_{C_4}=2$.
\end{itemize}
\end{cor}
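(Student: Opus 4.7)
The plan is to read off each coefficient directly from the recurrences in \Cref{prop:RECURSION-pathS-cycleS}, exploiting the fact that every partition $\lambda$ listed has very restricted support. The key observation is that a summand $(j-1)e_jX_{H_{n-j}}$ (where $H\in\{P,C\}$) contributes to $[e_\lambda]X_{H_n}$ only when $j\in\supp(\lambda)$ and the factor $X_{H_{n-j}}$ contains $e_{\lambda-j}$; the small cardinality of $\supp(\lambda)$ in every case reduces each extraction to just a handful of terms, with small-$n$ boundary cases checked by direct inspection.

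For $\lambda=(n)$ no sum term can contribute, so $[e_n]X_{P_n}=n$ and $[e_n]X_{C_n}=n(n-1)$ are read off from the leading terms of the respective recurrences. For $\lambda=(n-1,1)$, since $j\ge 2$ in the recurrence, the only contribution to $[e_{n-1}e_1]X_{P_n}$ comes from $j=n-1$ matched with $X_{P_1}=e_1$, giving $n-2$. For $\lambda=(n-2,2)$ with $n\ge 5$, two contributions appear in each recurrence: the $j=n-2$ term matched with $X_{P_2}=X_{C_2}=2e_2$, and the $j=2$ term matched with the coefficient of $e_{n-2}$ in $X_{P_{n-2}}$ or $X_{C_{n-2}}$ just computed in the first bullet. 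Summing gives $2(n-3)+(n-2)=3n-8$ for paths and $2(n-3)+(n-2)(n-3)=n(n-3)$ for cycles.

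For $[(e_k)^r]X_{P_{kr}}$ I induct on $r$. The base $r=1$ is the first bullet, and for $r\ge 2$ the only admissible choice is $j=k$ (since $\supp((k^r))=\{k\}$), paired with $X_{P_{k(r-1)}}$; the inductive hypothesis then yields $(k-1)\cdot k(k-1)^{r-2}=k(k-1)^{r-1}$. The identical induction for the cycle, with base value $k(k-1)$, produces $k(k-1)^r$. Finally $[e_2^2]X_{C_4}=2$ is the $k=r=2$ specialization; it also drops out of the direct computation $X_{C_4}=12e_4+e_2\,X_{C_2}=12e_4+2e_2^2$.

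No step presents a genuine obstacle: the entire argument is routine bookkeeping driven by $|\supp(\lambda)|\le 2$ in every listed partition (with the single exception of the $(k^r)$ family, which is handled by the clean inductive step above). A cleaner alternative derivation instead invokes \Cref{prop:coeffs-pathS-cycleS} together with the values $\eps(\emptyset)=1$, $\eps((1))=0$, $\eps((m))=m-1$, and $\eps((k^{r-1}))=(k-1)^{r-1}$, each immediate from \Cref{def:support}; substituting these into the formulas $c_\lambda=\sum_{a\in\supp(\lambda)}a\,\eps(\lambda-a)$ and $c_\lambda=\sum_{a\in\supp(\lambda)}a(a-1)\,\eps(\lambda-a)$ recovers every listed identity in one step.
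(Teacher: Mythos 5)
Your proposal is correct and follows exactly the route the paper intends: the corollary is stated as being ``readily extracted'' from \Cref{prop:RECURSION-pathS-cycleS} with no written proof, and your coefficient extraction from those recurrences (including the induction on $r$ for the $(k^r)$ family and the boundary checks) supplies precisely that argument, with all arithmetic checking out. The alternative derivation via \Cref{prop:coeffs-pathS-cycleS} and the values of $\eps$ is also valid and consistent with the paper's own formulas.
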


The rest of this section follows the structure of ~\Cref{sec: generating functions}.

\subsection{Recurrences for twinned paths}

In this section we derive  recurrence formulas for the chromatic symmetric function for a path twinned at one or both leaves or at an internal vertex.
\subsubsection{Paths twinned at a leaf} 

\hspace{5cm}

In the next proposition, we give formulas for the chromatic symmetric function $X_{P_{n,v}}$  of the path twinned at a leaf, one in terms of the path chromatic symmetric function, and the other a recurrence in the spirit of~\Cref{prop:RECURSION-pathS-cycleS}. 
 The recurrence given below makes the $e$-positivity transparent.
\begin{prop}\label{SHORTprf-epoS-twin-path-leaf} \
Let $v$ be a leaf of the path $P_n$. Then, for $n\ge 4$, 
\begin{align*}\label{eqn:twin-leaf-path-to-pathS}
X_{P_{n,v}}
&=2 (n+1) e_{n+1}
+2\sum_{j=3}^{n}(j-1) e_j X_{P_{n+1-j}}.
\end{align*}
Thus $X_{P_{n,v}}$ is $e$-positive. 
Moreover, for $n\ge 4$, $X_{P_{n,v}}$
 satisfies the $e$-positive recurrence
\[X_{P_{n,v}}=\sum_{j=2}^{n-2} (j-1) e_j  X_{P_{n-j,v}}  + 2(n+1)e_{n+1}+2(n-1)e_n e_1 + 2 (n-3) e_{n-1}e_2 , \]
with initial values $X_{P_{1,v}}= 2 e_2, \, X_{P_{2,v}}=6 e_3, \text{ and }
X_{P_{3,v}}=8 e_4+ 4  e_3e_1$. 
\end{prop}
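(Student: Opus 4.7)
The plan is to reduce everything to the path recurrence from Proposition \ref{prop:RECURSION-pathS-cycleS}(a), using the key identity $X_{P_{n,v}} = 2(X_{P_{n+1}} - e_2 X_{P_{n-1}})$ furnished by Corollary \ref{leaf-twinS} (applied with $H = P_{n-1}$, so that $H' = P_n$ and $H'' = P_{n+1}$). For the first formula, I would apply the path recurrence to $X_{P_{n+1}}$:
\[
X_{P_{n+1}} = (n+1)e_{n+1} + \sum_{j=2}^{n}(j-1)e_j\, X_{P_{n+1-j}}.
\]
The $j=2$ term on the right is $e_2 X_{P_{n-1}}$, which exactly cancels the $-2 e_2 X_{P_{n-1}}$ coming from doubling the identity. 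This immediately yields
\[
X_{P_{n,v}} = 2(n+1) e_{n+1} + 2\sum_{j=3}^{n}(j-1)e_j\, X_{P_{n+1-j}},
\]
and $e$-positivity then follows because each $X_{P_m}$ is $e$-positive (Stanley's original result).

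For the recurrence, the cleanest route is through the generating function. Multiplying the identity in Proposition \ref{prop:gf-twin-path-end-vertex} through by $D(z) = 1 - G(z)$ and using $\mathcal{X}_P(z) D(z) = E(z)$ gives
\[
D(z)\, \mathcal{X}_{P_v}(z) = 2(1 - e_2 z^2) E(z) - (2 + 2 e_1 z) D(z).
\]
Extracting the coefficient of $z^{n+1}$ from each side expresses $X_{P_{n,v}} - \sum_{j=2}^{n-1}(j-1)e_j X_{P_{n-j,v}}$ as an explicit polynomial, which for $n \ge 3$ works out to $2(n+1)e_{n+1} + 2(n-1) e_n e_1 - 2 e_{n-1} e_2$. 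Peeling off the $j = n-1$ term of the sum, which equals $(n-2) e_{n-1}\, X_{P_{1,v}} = 2(n-2) e_{n-1} e_2$ and moving it to the right-hand side, combines with $-2 e_{n-1} e_2$ to give exactly $2(n-3) e_{n-1} e_2$, producing the stated recurrence.

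An equivalent, generating-function-free route is to substitute $X_{P_{n-j,v}} = 2(X_{P_{n-j+1}} - e_2 X_{P_{n-j-1}})$ into $\sum_{j=2}^{n-2}(j-1)e_j X_{P_{n-j,v}}$ and compare with the expansion of $X_{P_{n,v}} = 2 X_{P_{n+1}} - 2 e_2 X_{P_{n-1}}$ obtained by applying the path recurrence to both $X_{P_{n+1}}$ and $X_{P_{n-1}}$; the inner convolution sums cancel by an index shift, and one is left only with the boundary terms. Once the recurrence is established, its $e$-positivity is transparent (every coefficient is a positive integer and every $X_{P_{n-j,v}}$ is $e$-positive by induction on $n$, with the base cases $X_{P_{1,v}}, X_{P_{2,v}}, X_{P_{3,v}}$ furnished by Proposition \ref{prop:epos-via-gf-twin-pathS-leaf} or direct computation).

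The only real obstacle is the combinatorial bookkeeping at the boundary: when the path recurrence is applied to $X_{P_{n+1}}$, the terms with $j \in \{n-1, n\}$ produce $X_{P_2} = 2 e_2$ and $X_{P_1} = e_1$, and one must show that after the $j=2$ cancellation and the separation of the $j = n-1$ term of the twin sum, these boundary contributions combine into precisely $2(n-1) e_n e_1 + 2(n-3) e_{n-1} e_2$. This is a finite check in fixed degrees, so once the algebra is done carefully, the recurrence and its $e$-positivity follow immediately, and the initial values $X_{P_{1,v}} = 2e_2$, $X_{P_{2,v}} = 6e_3$, $X_{P_{3,v}} = 8 e_4 + 4 e_3 e_1$ are verified by evaluating the first formula at $n = 1, 2, 3$.
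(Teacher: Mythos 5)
Your proposal is correct and matches the paper's proof in all essentials: both rest on the identity $X_{P_{n,v}} = 2(X_{P_{n+1}} - e_2 X_{P_{n-1}})$ combined with the path recurrence of Proposition~\ref{prop:RECURSION-pathS-cycleS}(a), with the $j=2$ term cancelling to give the first formula. For the second recurrence the paper takes exactly your ``generating-function-free'' alternative (substituting the twin identity into the expansions of $X_{P_{n+1}}$ and $X_{P_{n-1}}$, regrouping the convolution into $\sum_{j=2}^{n-2}(j-1)e_j X_{P_{n-j,v}}$, and checking that the boundary terms combine to $2(n-1)e_ne_1 + 2(n-3)e_{n-1}e_2$), while your primary route via extracting the coefficient of $z^{n+1}$ from $D(z)\,\mathcal{X}_{P_v}(z)$ is a correct and equivalent repackaging of the same computation.
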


\begin{proof}
The first expression follows from~\Cref{prop:RECURSION-pathS-cycleS}\ref{RECURSION-pathS} and~\Cref{prop:gf-twin-path-end-vertex},  noting that 
\begin{align*}
X_{P_{n,v}} &= 2X_{P_{n+1}} - X_{P_2} X_{P_{n-1}} 
= 2(n+1)e_{n+1} + 2\sum_{j=2}^n (j-1)e_jX_{P_{n+1-j}}  - 2e_2X_{P_{n-1}} \\
&=2(n+1)e_{n+1} + 2\sum_{j=3}^n (j-1)e_jX_{P_{n+1-j}} + 2e_2X_{P_{n-1}} - 2e_2X_{P_{n-1}}. 
\end{align*}

For the second recurrence,  we 
apply the triple deletion argument to $P_{n,v}$ followed by~\Cref{prop:RECURSION-pathS-cycleS}\ref{RECURSION-pathS} to both terms. Thus,
    \begin{align*}
        X_{P_{n,v}} &= 2X_{P_{n+1}} - X_{P_2} X_{P_{n-1}} = 2X_{P_{n+1}} - 2e_2 X_{P_{n-1}} \\
        &= 2(n+1)e_{n+1} + 2\sum_{j=2}^n (j-1)e_jX_{P_{n+1-j}} - 2e_2\left[(n-1)e_{n-1} + \sum_{j=2}^{n-2} (j-1)e_jX_{P_{n-1-j}}\right]\\
        &= 2(n+1)e_{n+1}-2(n-1)e_{n-1}e_2  + 2\sum_{j=2}^{n-2} (j-1)e_j \left[2X_{P_{n+1-j}} - 2e_2X_{P_{n-1-j}}\right]\\
        &\qquad+ 2(n-2)e_{n-1}X_{P_{2}} + 2(n-1)e_{n}X_{P_1}\\
        &= 2(n+1)e_{n+1} + 2\sum_{j=2}^{n-2} (j-1)e_j X_{P_{n-j,v}} + 2(n-1)e_ne_1 + 2(n-3)e_{n-1}e_{2},
    \end{align*}
    where the final step follows by the triple deletion argument applied to $P_{n-j,v}$. As this is an $e$-positive recursion with $e$-positive initial conditions, by induction it follows that $X_{P_{n,v}}$ is $e$-positive for all $n$.
\end{proof}
    \subsubsection{Paths twinned at both leaves}
Our new contribution is the $e$-positive recurrence below.

\begin{prop}\label{SHORTprf-epoS-twin-path-both-leaves} 
For $n \geq 6$, the chromatic symmetric function $X_{P_{n,v,w}}$ for the  path $P_n$ twinned at both leaves $v,w$ satisfies the recurrence 
\begin{equation*}
\begin{split}
\frac{1}{4}X_{P_{n,v,w}}&=\frac{1}{4}\sum_{j=3}^{n-3}(j-1)e_j  X_{P_{n-j,v,w}} \\
&\quad  + (n+2) e_{n+2} + n\, e_{n+1} e_1 +  3 (n-2) e_{n-1} e_3
  + 2(n-3) e_{n-2} e_3 e_1 +4(n-3) e_{n-2} e_4\\
&\quad+ e_2\left[ \frac{1}{4} X_{P_{n-2,v,w}} -2 e_n -(n-4) e_{n-2}e_2  -(n-2) e_{n-1}e_1 \right],
\end{split}
\end{equation*}
with the initial conditions 
\[
\begin{array}{lcl}
X_{P_{2,v,w}}=24 e_4, & \qquad & X_{P_{4,v,w}}=24 e_3^2+8 e_4 e_2+16 e_5 e_1 + 24 e_6, \\
X_{P_{3,v,w}}=4 e_3 e_2+12 e_4 e_1+ 20 e_5, & & X_{P_{5,v,w}}= 16e_{3}e_{3}e_{1} + 68e_{4}e_{3} + 12e_{5}e_{2}+20e_{6}e_{1}+28e_7.
\end{array}
\]
Moreover, despite the negative terms, the expression is $e$-positive.
\end{prop}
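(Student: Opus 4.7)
The plan is to verify the recurrence by a direct algebraic manipulation, using only the identity from \Cref{prop:Twinned-both-leaves-pathS} and the path recurrence (\Cref{prop:RECURSION-pathS-cycleS}\ref{RECURSION-pathS}), and then to extract $e$-positivity from the explicit coefficient formulas in \Cref{cor:double-twin-path-leaveS-coeffS}.

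First, I would expand $\tfrac{1}{4}X_{P_{n,v,w}}=X_{P_{n+2}}-2e_2X_{P_n}+e_2^2X_{P_{n-2}}$ (from \Cref{prop:Twinned-both-leaves-pathS}) by applying the path recurrence to each of the three terms, obtaining
\[
\tfrac14 X_{P_{n,v,w}}=(n+2)e_{n+2}+\!\!\sum_{j=2}^{n+1}\!(j{-}1)e_jX_{P_{n+2-j}}-2e_2\Big[ne_n+\!\!\sum_{j=2}^{n-1}\!(j{-}1)e_jX_{P_{n-j}}\Big]+e_2^2\Big[(n{-}2)e_{n-2}+\!\!\sum_{j=2}^{n-3}\!(j{-}1)e_jX_{P_{n-2-j}}\Big].
\]
In parallel, I would write
\[
\sum_{j=2}^{n-3}(j-1)e_j\cdot\tfrac14 X_{P_{n-j,v,w}}=\sum_{j=2}^{n-3}(j-1)e_jX_{P_{n+2-j}}-2e_2\sum_{j=2}^{n-3}(j-1)e_jX_{P_{n-j}}+e_2^2\sum_{j=2}^{n-3}(j-1)e_jX_{P_{n-j-2}}
\]
again by \Cref{prop:Twinned-both-leaves-pathS}.

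Subtracting, the $e_2^2$-sums cancel identically. The remaining differences are boundary terms: from the first sum only $j\in\{n-2,n-1,n,n+1\}$ survive, contributing $(n-3)e_{n-2}X_{P_4}+(n-2)e_{n-1}X_{P_3}+(n-1)e_nX_{P_2}+ne_{n+1}X_{P_1}$, and from the second only $j\in\{n-2,n-1\}$ survive, contributing $-2e_2[(n-3)e_{n-2}X_{P_2}+(n-2)e_{n-1}X_{P_1}]$. Substituting $X_{P_1}=e_1$, $X_{P_2}=2e_2$, $X_{P_3}=3e_3+e_2e_1$ and $X_{P_4}=4e_4+2e_3e_1+2e_2^2$, then collecting coefficients of $e_{n+2}$, $e_{n+1}e_1$, $e_{n-1}e_3$, $e_{n-1}e_2e_1$, $e_ne_2$, $e_{n-2}e_4$, $e_{n-2}e_3e_1$, and $e_{n-2}e_2^2$, gives exactly the right-hand side of the claimed recurrence (with the $j=2$ contribution $e_2\cdot\tfrac14 X_{P_{n-2,v,w}}$ absorbed into the bracketed term).

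For $e$-positivity, the sum $\sum_{j=3}^{n-3}(j-1)e_j\cdot\tfrac14 X_{P_{n-j,v,w}}$ is $e$-positive by induction (using the listed initial conditions), and the five explicit monomial terms are manifestly $e$-positive. The only subtle piece is the bracket
\[
\tfrac14 X_{P_{n-2,v,w}}-2e_n-(n-4)e_{n-2}e_2-(n-2)e_{n-1}e_1.
\]
By \Cref{cor:double-twin-path-leaveS-coeffS}(a), the coefficients in $\tfrac14 X_{P_{n-2,v,w}}$ of $e_n$, $e_{n-2}e_2$, and $e_{n-1}e_1$ are precisely $n$, $n-4$, and $n-2$, respectively (using $k=n-2$ in the formulas $c_{(k+2)}=4(k+2)$, $c_{(k,2)}=4(k-2)$, $c_{(k+1,1)}=4k$, valid since $n\ge 6$). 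Subtracting therefore leaves coefficients $n-2>0$, $0$, and $0$ on these three basis elements; every other $e$-coefficient in $\tfrac14 X_{P_{n-2,v,w}}$ is untouched and non-negative. Hence the bracket is $e$-positive.

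The main obstacle is bookkeeping: tracking exactly which boundary $j$'s survive the subtraction and evaluating the small paths $X_{P_1},\dots,X_{P_4}$ to recognize the $e_2$-factor in the final grouping. The $e$-positivity step is then immediate once the recurrence is in hand, because the coefficient formulas in \Cref{cor:double-twin-path-leaveS-coeffS} were designed to match precisely these three ``obstruction'' terms.
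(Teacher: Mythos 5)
Your proposal is correct and follows essentially the same route as the paper: both start from $\tfrac14 X_{P_{n,v,w}}=X_{P_{n+2}}-2e_2X_{P_n}+e_2^2X_{P_{n-2}}$, expand via the path recurrence, recognize the sum $\tfrac14\sum_j(j-1)e_jX_{P_{n-j,v,w}}$ with the $j=2$ term peeled off into the bracket, and handle $e$-positivity by checking the three coefficients $[e_n]$, $[e_{n-1}e_1]$, $[e_{n-2}e_2]$ of $\tfrac14X_{P_{n-2,v,w}}$ (which you take from \Cref{cor:double-twin-path-leaveS-coeffS}, while the paper recomputes them from the path coefficients in \Cref{cor:coeffS-RECURSION-pathS-cycleS} --- an immaterial difference). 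Your boundary-term bookkeeping checks out exactly against the stated recurrence.
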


\begin{proof}
By the triple deletion argument, we have that 
\[ X_{P_{n,v,w}} = 2X_{P_{n+1,v}} - 2e_2X_{P_{n-1,v}}.\] 
Applying the triple deletion argument again to both twinned terms, we have that $X_{P_{n+1,v}} = 2X_{P_{n+2}} - 2e_2X_{P_n}$ and $X_{P_{n-1,v}} = 2X_{P_{n}} - 2e_2X_{P_{n-2}}$. Thus, for $n\geq 3$,
\begin{equation}\label{eq:useful_path_twinned_both_leaves}
    \frac{1}{4}X_{P_{n,v,w}} = X_{P_{n+2}}+ e_2^2 X_{P_{n-2}} -2e_2X_{P_n} .
\end{equation}

Now we prove the recurrence relation by strong induction on $n$. The initial conditions are checked directly. For $n\geq 6$, using repeatedly~\eqref{eq:useful_path_twinned_both_leaves} and~\Cref{prop:RECURSION-pathS-cycleS}\ref{RECURSION-pathS}, we write
\begin{align*}
    \frac{1}{4}X_{P_{n,v,w}}&=X_{P_{n+2}} -2e_2X_{P_n} + e_2^2 X_{P_{n-2}}\\
    &= (n+2)e_{n+2} + \sum_{j=2}^{n+1}(j-1)e_jX_{P_{n+2-j}} +(n-2)e_{n-2}e_2^2 + e_2^2\sum_{j=2}^{n-3}(j-1)e_jX_{P_{n-2-j}}\\
    &\quad-2ne_ne_2 - 2e_2\sum_{j=2}^{n-1}(j-1)e_jX_{P_{n-j}}\\
    &= \frac{1}{4}\sum_{j=2}^{n-3}(j-1)e_jX_{P_{n-j,v,w}}\\
    &\quad+ (n+2) e_{n+2} + n e_{n+1} e_1 + 3 (n-2) e_{n-1} e_3
  + 2(n-3) e_{n-2} e_3 e_1 +4(n-3) e_{n-2} e_4\\
    &\quad-2 e_ne_2 -(n-4)e_{n-2}e_2^2  -(n-2) e_{n-1} e_2e_{1}.
\end{align*}
To rearrange this into the announced form, we peel off the $j=2$ term from the sum and group it with the negative terms:
\begin{align*}
    \frac{1}{4}X_{P_{n,v,w}}&= \frac{1}{4}\sum_{j=3}^{n-3}(j-1)e_jX_{P_{n-j,v,w}}\\
    &\quad+ (n+2) e_{n+2} + n e_{n+1} e_1 + 3 (n-2) e_{n-1} e_3
  + 2(n-3) e_{n-2} e_3 e_1 +4(n-3) e_{n-2} e_4\\
    &\quad+ e_2 \left[\frac{1}{4} X_{P_{n-2,v,w}} -2 e_n -(n-4)e_{n-2}e_2  -(n-2) e_{n-1}e_{1}\right].
\end{align*}

Next we prove that the term within brackets 
\begin{equation}\label{eq: positive P_n-2}
    \left[ \frac{1}{4} X_{P_{n-2,v,w}} -2e_n -(n-4)  e_{n-2}e_2 -(n-2)  e_{n-1}e_1\right]
\end{equation}
is $e$-positive. This follows by comparing the coefficients of the $e$-functions involved. Consider~\eqref{eq:useful_path_twinned_both_leaves} applied to 
$\frac{1}{4}X_{P_{n-2,v,w}}$, and use the coefficients described in~\Cref{cor:coeffS-RECURSION-pathS-cycleS}. We obtain the following formulas for the coefficients:
\begin{equation*}
    [e_n]\tfrac{1}{4}X_{P_{n-2,v,w}} = n, \qquad [e_{n-1}e_1]\tfrac{1}{4}X_{P_{n-2,v,w}} = n-2, \quad \text{and} \quad [e_{n-2}e_2]\tfrac{1}{4}X_{P_{n-2,v,w}} = n-4.
\end{equation*}
In particular, notice that the coefficients of $e_n$ and $e_{n-1}e_1$ are zero and that the coefficient of $e_ne_2$ is $n-2$, which is positive for $n\ge 6$. 
Thus, the negative terms appearing in~\eqref{eq: positive P_n-2} are absorbed by terms in $\frac{1}{4}X_{P_{n-2,v,w}}$, and~\eqref{eq: positive P_n-2} is $e$-positive. 

Finally, as this is an $e$-positive recurrence with $e$-positive initial conditions, by induction it follows that $X_{P_{n,v,w}}$ is $e$-positive for all $n$. 
\end{proof}

\subsubsection{Paths twinned at an interior vertex}\label{sect_recurel_pathinterior}

Next we provide an $e$-positive recurrence relation for path graphs twinned at an interior vertex. 
\begin{theorem}\label{thm:epoS-twin-pathS-recurrence}
The chromatic symmetric function $X_{P_{n,\ell}}$ for the  path $P_n$ twinned at the interior vertex $\ell$ satisfies the $e$-positive recurrence for $\ell\ge 2$, $n\ge \ell +1$, and $n\ge 4$,
 \begin{equation*}\begin{split}X_{P_{n,\ell}} 
        &=\sum_{j=2}^{n
        -\ell-1}(j-1)e_j X_{P_{n-j,\ell}}
        +4(n+1)e_{n+1} +2n e_1 e_n+2e_1\sum_{j=n-\ell+2}^{n-1} (j-1)e_j X_{P_{n-j}}\\ 
        &\quad+4 \sum_{j=n-\ell+3}^n (j-1)e_j X_{P_{n+1-j}}+2 \sum_{j=n-\ell+1}^{n-\ell+2} \textcolor{black}{(j-2)} e_j X_{P_{n+1-j}}
        +(n-\ell-2) e_{n-\ell} X_{P_{\ell,\ell}}.
        \end{split}
    \end{equation*}
Thus, for $n\ge 3$ and $2\le \ell \le n-1$, $X_{P_{n,\ell}}$ is $e$-positive. 
\end{theorem}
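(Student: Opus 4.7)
My starting point is the identity from \Cref{JM}, which expresses $X_{P_{n,\ell}}$ as a $\Z$-linear combination of $4X_{P_{n+1}}$, $2e_1 X_{P_n}$, and four product terms $X_{P_a}X_{P_b}$ involving shorter paths. I would apply the path recurrence of \Cref{prop:RECURSION-pathS-cycleS}\ref{RECURSION-pathS} to each path chromatic symmetric function appearing, thereby indexing all contributions uniformly by a leading elementary symmetric factor $e_j$, and then collect by $j$. Applying the recurrence to $X_{P_{n+1}}$ and $X_{P_n}$ immediately produces the desired leading terms $4(n+1)e_{n+1}$ and $2n\,e_1 e_n$, along with sums $4\sum_{j=2}^{n}(j-1)e_j X_{P_{n+1-j}}$ and $2e_1\sum_{j=2}^{n-1}(j-1)e_j X_{P_{n-j}}$. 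Applying the same recurrence to each of the ``second factors'' $X_{P_{n-\ell+2}}$, $X_{P_{n-\ell+1}}$, $X_{P_{n-\ell}}$ in the product terms indexes those contributions by $e_j$ as well.

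For each $j$ in the small range $2 \le j \le n-\ell-1$, the central step is to collect all the contributions at index $j$ and recognize them, via \Cref{JM} applied at $n-j$, as $(j-1)e_j X_{P_{n-j,\ell}}$; this reconstructs the recursive sum in the claimed identity. At the three boundary indices $j \in \{n-\ell, n-\ell+1, n-\ell+2\}$, the product terms run short and instead produce partial cancellation between $4X_{P_{n+1}}$ and the leading terms of the second-factor expansions: at $j = n-\ell+1, n-\ell+2$ the coefficient $4(j-1)$ combines with $-2(j-1)$ to drop to $2(j-2)$, giving the term $2\sum_{j=n-\ell+1}^{n-\ell+2}(j-2)e_j X_{P_{n+1-j}}$; at $j = n-\ell$, the residual $2(n-\ell-2)e_{n-\ell}X_{P_{\ell+1}}$ rewrites via the leaf-twin identity $X_{P_{\ell+1}} = \tfrac12 X_{P_{\ell,\ell}} + e_2 X_{P_{\ell-1}}$ from \Cref{leaf-twinS} as $(n-\ell-2)e_{n-\ell}X_{P_{\ell,\ell}}$ modulo an $e_2$-contribution that cancels against the $+2e_2 X_{P_{\ell-1}}X_{P_{n-\ell}}$ term of \Cref{JM} together with the $j=2$ sub-contribution of $X_{P_{n-\ell+2}}$. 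For $j \ge n-\ell+3$ (from $4X_{P_{n+1}}$) and $j \ge n-\ell+2$ (from $2e_1 X_{P_n}$), the expansions have no matching product contributions and survive as the claimed large-index sums. The precise tracking of these boundary cancellations is the main obstacle.

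For $e$-positivity, I would argue by strong induction on $n$. When $n \ge \ell+2$, every explicit coefficient in the recurrence is manifestly nonnegative, and $e$-positivity of $X_{P_{n,\ell}}$ follows from that of paths (\Cref{prop:RECURSION-pathS-cycleS}), of the leaf-twin $X_{P_{\ell,\ell}}$ (\Cref{SHORTprf-epoS-twin-path-leaf}), and of the smaller interior-twin paths $X_{P_{n-j,\ell}}$ for $2\le j\le n-\ell-1$. The base case $n = \ell+1$, in which the coefficient $(n-\ell-2) = -1$ makes the $X_{P_{\ell,\ell}}$ term negative, must be handled directly: since the recursive sum is empty, the remaining explicit terms can be simplified using $X_{P_{\ell,\ell}} = 2X_{P_{\ell+1}} - 2e_2 X_{P_{\ell-1}}$ and the path recurrence to a manifestly $e$-positive expression.
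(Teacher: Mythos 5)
Your plan is essentially the paper's proof: expand every path term in \Cref{JM} via \Cref{prop:RECURSION-pathS-cycleS}, recollect the six contributions at each index $2\le j\le n-\ell-1$ into $(j-1)e_jX_{P_{n-j,\ell}}$, and resolve the boundary indices, ending with the identification $2(X_{P_{\ell+1}}-e_2X_{P_{\ell-1}})=X_{P_{\ell,\ell}}$; your induction for $e$-positivity also matches (the paper instead cites \Cref{theorem:twin-path-epoS-vertex} for the base cases $n=\ell+1,\ell+2,\ell+3$, but your direct treatment of $n=\ell+1$ does simplify to the $e$-positive expression $4(\ell+2)e_{\ell+2}+4\sum_{j=4}^{\ell+1}(j-1)e_jX_{P_{\ell+2-j}}+2e_3X_{P_{\ell-1}}$). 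Two bookkeeping details to correct when writing this out: at $j=n-\ell+1,n-\ell+2$ the cancellation is $4(j-1)-2j=2(j-2)$ (the $-2j$ coming from the leading terms $-2(n+1-\ell)e_{n+1-\ell}X_{P_\ell}$ and $-2(n+2-\ell)e_{n+2-\ell}X_{P_{\ell-1}}$), not $4(j-1)-2(j-1)$; and at $j=n-\ell$ the surplus $2(n-\ell-2)e_2e_{n-\ell}X_{P_{\ell-1}}$ cancels against the leading term of the $+2e_2X_{P_{\ell-1}}X_{P_{n-\ell}}$ expansion combined with the $j=n-\ell$ (not $j=2$) sub-contribution of $X_{P_{n-\ell+2}}$, which carries the factor $X_{P_2}=2e_2$.
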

\begin{proof}  
Fix $\ell\ge 2$, and consider $n\ge \ell +1$ with $n\ge 4$. We start with the recurrence relation  in the statement of~\Cref{JM}. 
This can be rewritten as
\begin{equation}\label{eqn:Reorder-twin-path-deg2-recS-v2}
\begin{split}
X_{P_{n,\ell} }
&
= 4X_{P_{n+1}}+ 2e_1 X_{P_n} +2e_2 X_{P_{\ell - 1}}X_{P_{n -\ell}}\\
&
-2X_{P_{\ell+1}} X_{P_{n - \ell}} - 2X_{P_{\ell}} X_{P_{n - \ell+1}}
-2X_{P_{\ell - 1}}X_{P_{n - \ell +2}}.
\end{split}
\end{equation} 
 
From~\eqref{eqn:Reorder-twin-path-deg2-recS-v2} and using ~\Cref{prop:RECURSION-pathS-cycleS}\ref{RECURSION-pathS}, we have that
\begin{align}X_{P_{n,\ell}} 
&=4 (n+1) e_{n+1} + 4\sum_{j=2}^n (j-1)e_j X_{P_{n+1-j}}
+2 e_1 \left[n\, e_n + \sum_{j=2}^{n-1} (j-1)e_j X_{P_{n-j}}\right]\notag \\
&\quad+ 2 e_2 X_{P_{\ell-1}}  \left[ (n-\ell) e_{n-\ell} +\sum_{j=2}^{n-\ell-1} (j-1) e_j  X_{P_{n-\ell-j}}\right]\notag \\
&\quad-2 X_{P_{\ell+1}}  \left[{{(n-\ell) e_{n-\ell}}}+ \sum_{j=2}^{n-\ell-1} (j-1) e_j  X_{P_{n-\ell-j}}\right]\notag \\
&\quad-2 X_{P_{\ell}}  \left[ {{(n+1-\ell) e_{n+1-\ell}}}+ \sum_{j=2}^{n-\ell} (j-1) e_j  X_{P_{n+1-\ell-j}}\right]\label{eqn:int-twin-temp1}\\
&\quad-2 X_{P_{\ell-1}}  \left[ {{(n+2-\ell) e_{n+2-\ell}}}+ \sum_{j=2}^{n-\ell+1} (j-1) e_j  X_{P_{n+2-\ell-j}}\right].\label{eqn:int-twin-temp2}
\end{align}
Notice that, in the six summands above, for each fixed $j$, the terms attached to the factor $(j-1)e_j$, when collected together, match the six terms in the right-hand side of the recurrence~\eqref{eqn:Reorder-twin-path-deg2-recS-v2} applied to $X_{P_{n-j,\ell}}$.
Grouping the remaining terms into an expression $Y$ if they have a positive sign, or  $Z$ if they have a negative sign, we obtain
\begin{equation*}
X_{P_{n,\ell}} = {\sum_{j=2}^{n-\ell-1} (j-1) e_j  X_{P_{n-j,\ell}}  }
+Y { {-Z}}.
\end{equation*}
The positive terms $Y$ are given by 
\begin{equation*}\label{eqn:twin-path-int-Y}
\begin{split}
Y&= 4 (n+1) e_{n+1}+2 ne_n e_1 +  {2 e_2 X_{P_{\ell-1}} (n-\ell) e_{n-\ell}} \\
&\quad+4 \sum_{j=n-\ell}^n (j-1)e_j X_{P_{n+1-j}}+2e_1 \sum_{j=n-\ell}^{n-1} (j-1)e_j X_{P_{n-j}}\\
&=4 (n+1) e_{n+1}+2 n e_n e_1 
+4\!\!\! \sum_{j=n-\ell+3}^n\!\!\! (j-1)e_j X_{P_{n+1-j}} +2e_1\!\sum_{j=n-\ell+2}^{n-1} (j-1)e_j X_{P_{n-j}}  \\
&\quad+ \underbrace{4\!\!\sum_{j=n-\ell}^{n-\ell+2}\!\! (j-1)e_j X_{P_{n+1-j}}+ 2e_1\!\!\sum_{j=n-\ell}^{n-\ell+1}\!\! (j-1)e_j X_{P_{n-j}}+ 2 e_2 X_{P_{\ell-1}} (n-\ell) e_{n-\ell}}_{Y_1},
\end{split}
\end{equation*}
where the last line is obtained by splitting the summations. 
The negative terms $Z$ are given by
\begin{align*}
 Z&= { {2 X_{P_{\ell+1}}  (n-\ell) e_{n-\ell} +2 X_{P_{\ell}}   (n+1-\ell) e_{n+1-\ell}  +2 X_{P_{\ell-1}}  (n+2-\ell) e_{n+2-\ell}}}\\
&\quad+ 2 X_{P_{\ell}} (n-\ell-1) e_{n-\ell} X_{P_1} + 
2 X_{P_{\ell-1}} \sum_{j=n-\ell}^{n-\ell+1} (j-1) e_j  X_{P_{n+2-\ell-j}},
\end{align*}
where the last two terms come from the sums  in~\eqref{eqn:int-twin-temp1} and~\eqref{eqn:int-twin-temp2}. We rewrite $Z$ so that $Y-Z$ is easier to analyze.
\begin{equation*}
\begin{split}
Z&={ { 2\sum_{j=n-\ell}^{n-\ell+2} j e_j X_{P_{n+1-j}}}}\\
&\quad+{ { 2 X_{P_{\ell}} (n\!-\!\ell\!-\!1) e_{n-\ell} X_{P_1} + 
2 X_{P_{\ell-1}}(n-\ell) e_{n-\ell+1}X_{P_1}}}
+ {2 X_{P_{\ell-1}} (n\!-\!\ell\!-\!1) e_{ n-\ell}  X_{P_{2}}}\\
&=2\sum_{j=n-\ell}^{n-\ell+2} j e_j X_{P_{n+1-j}}
+ 2e_1 \sum_{j=n-\ell}^{n-\ell+1} (j-1)e_j X_{P_{n-j}} 
+ 2 X_{P_{2}}X_{P_{\ell-1}} (n\!-\!\ell\!-\!1) e_{ n-\ell}
\end{split}
\end{equation*}
Using $X_{P_2}=2e_2$, 
we have 
\[ Y_1-Z={ { 2\sum_{j=n-\ell}^{n-\ell+2} (j-2) e_j X_{P_{n+1-j}}}}
- {2 X_{P_{\ell-1}} (n-\ell-2) e_{ n-\ell}  e_2},\]
because the terms with the factor $e_1=X_{P_1}$ can be seen to vanish identically. 
By splitting the sum, $Y_1-Z$ can be rewritten as 
\[ Y_1-Z= 2\sum_{j=n-\ell+1}^{n-\ell+2} (j-2) e_j X_{P_{n+1-j}}
+ 2 (n-\ell-2)  e_{ n-\ell}  {(X_{P_{\ell+1}}- e_2 X_{P_{\ell-1}})}.\]
In the last term on the right-hand side, the factor of $2(X_{P_{\ell+1}}- e_2 X_{P_{\ell-1}})=2X_{P_{\ell+1}}- X_{P_2} X_{P_{\ell-1}}=X_{P_{\ell,\ell}}$ is precisely the chromatic symmetric function of the path $P_\ell$ twinned at a leaf.  Thus, putting all of this together, we obtain the recurrence relation from the statement. 

Finally, we can deduce the $e$-positivity. For the initial values $n=\ell+1, \ell+2, \ell+3$, $X_{P_{n, \ell}}$ is  $e$-positive by~\Cref{theorem:twin-path-epoS-vertex}.
We proceed by strong induction on $n$ to show the claimed $e$-positivity for $X_{P_{n, \ell}}$ for $n\geq \ell+4$. Our induction hypothesis is that $X_{P_{m,\ell}}$ is $e$-positive for all $m<n, m\ge \ell+1$. We only need to look at $Y_1-Z$ since that is the part containing negative terms. By~\Cref{SHORTprf-epoS-twin-path-leaf} we know that $Y_1-Z$ is $e$-positive.  Hence,  
by induction the proof is complete.
\end{proof}

As an application of the preceding recurrence, we have the following corollary.

\begin{cor}[{See also~\cite[Theorem 7.8]{gebhard_sagan_2001}}]\label{cor:SHORTprf-epoS-double-twin-int-vertex-leaf} 
Consider the path $P_n$ on $n$ vertices, with $n\ge 4$. Let $2\le \ell \le n-2$ and let $v$ be the leaf $n$. 
Then the chromatic symmetric function of $P_{n,\ell,v}$ is $e$-positive.
\end{cor}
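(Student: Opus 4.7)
The plan is to reduce $X_{P_{n,\ell,v}}$ to chromatic symmetric functions whose $e$-positivity has already been established, via the leaf-twinning formula of \Cref{leaf-twinS} combined with the interior-vertex recurrence of \Cref{thm:epoS-twin-pathS-recurrence}. Since $\ell \le n-2$, the new vertex $\ell'$ introduced when twinning $P_n$ at $\ell$ is not adjacent to vertex $n$, so $v=n$ remains a leaf of $P_{n,\ell}$. Applying \Cref{leaf-twinS} with $H'=P_{n,\ell}$ (so that $H = P_{n-1,\ell}$ and $H'' = P_{n+1,\ell}$) yields
\[
X_{P_{n,\ell,v}} = 2\bigl(X_{P_{n+1,\ell}} - e_2\, X_{P_{n-1,\ell}}\bigr),
\]
and it therefore suffices to show that $X_{P_{n+1,\ell}} - e_2\, X_{P_{n-1,\ell}}$ is $e$-positive.

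Next I would invoke the $e$-positive recurrence of \Cref{thm:epoS-twin-pathS-recurrence} to expand $X_{P_{n+1,\ell}}$; the hypotheses $n+1 \ge 4$ and $n+1 \ge \ell+1$ are both satisfied, since $n \ge 4$ and $\ell \le n-2$. Writing that recurrence schematically as
\[
X_{P_{n+1,\ell}} = \sum_{j=2}^{n-\ell}(j-1)\,e_j\, X_{P_{n+1-j,\ell}} \;+\; Y_{n+1},
\]
where $Y_{n+1}$ denotes the collection of remaining terms (involving only $e$-monomials, path chromatic symmetric functions $X_{P_m}$, and the leaf-twinned lariat $X_{P_{\ell,\ell}}$), the decisive observation is that the $j=2$ summand is \emph{exactly} $e_2\, X_{P_{n-1,\ell}}$. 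Consequently the negative contribution from \Cref{leaf-twinS} cancels perfectly, leaving
\[
X_{P_{n+1,\ell}} - e_2\, X_{P_{n-1,\ell}} = \sum_{j=3}^{n-\ell}(j-1)\,e_j\, X_{P_{n+1-j,\ell}} \;+\; Y_{n+1}.
\]

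The indices remaining in the sum satisfy $\ell+1 \le n+1-j \le n-2$, so every $X_{P_{n+1-j,\ell}}$ appearing is $e$-positive by \Cref{theorem:twin-path-epoS-vertex}; the $e$-positivity of $X_{P_{\ell,\ell}}$ appearing inside $Y_{n+1}$ is given by \Cref{SHORTprf-epoS-twin-path-leaf}; and $Y_{n+1}$ itself is $e$-positive by construction in \Cref{thm:epoS-twin-pathS-recurrence}. Piecing these together yields the desired $e$-positive expansion of $X_{P_{n,\ell,v}}$. The only subtlety is the boundary case $\ell = n-2$, where the sum over $j$ is empty and the claim reduces immediately to $e$-positivity of $Y_{n+1}$; there is no real obstacle, just careful bookkeeping of the cancellation between the triple-deletion leaf-twin formula and the $j=2$ term of the interior-twin recurrence.
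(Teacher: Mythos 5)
Your proposal is correct and follows essentially the same route as the paper: reduce via the leaf-twin triple-deletion identity to $2(X_{P_{n+1,\ell}}-e_2X_{P_{n-1,\ell}})$, then observe that the $j=2$ term of the recurrence in \Cref{thm:epoS-twin-pathS-recurrence} (applied with $n$ replaced by $n+1$, valid since $\ell\le n-2$) exactly absorbs the subtracted $e_2X_{P_{n-1,\ell}}$. The only nitpick is your description of the boundary case $\ell=n-2$: the original sum is not empty there (it consists precisely of the $j=2$ term that performs the cancellation); only the residual sum over $j\ge 3$ is empty.
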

\begin{proof} The triple deletion argument implies that  
\[X_{P_{n,\ell,v}}=2 X_{P_{n+1,\ell}}- X_{P_2} X_{P_{n-1,\ell}}=
2(X_{P_{n+1,\ell}}-e_2 X_{P_{n-1,\ell}}).\]
Examining the recurrence for $X_{P_{n,\ell}}$ in ~\Cref{thm:epoS-twin-pathS-recurrence} one sees that, when $n-\ell-1\ge 2$,  the initial term in the first sum in the  expression for $X_{P_{n,\ell}}$ is $e_2 X_{P_{n-2,\ell}}$, making $X_{P_{n,\ell}}-e_2 X_{P_{n-2,\ell}}$  $e$-positive.  Replacing $n$ by $n+1$ now gives $e$-positivity of $X_{P_{n+1,\ell}}-e_2 X_{P_{n-1,\ell}}$ for 
$n-\ell\ge 2$.
\end{proof}

\subsection{Recurrence for twinned cycles} 

In this section we derive an $e$-positive  recursive formula for  the  twinned cycle, analogous to those for the twinned path from the last section.  We give a similar formula for another family of graphs that we call  moose graphs.

    \subsubsection{Twinned cycles}
We start with the cycle graph.
    \begin{theorem}\label{thm:SHORTprf-epoS-twin-cycleS-recurrence} The chromatic symmetric function $X_{C_{n,v}}$ for the  cycle $C_n$ twinned at a vertex $v$ is $e$-positive. 
For $n\geq 5$, it satisfies the $e$-positive recurrence 
\begin{equation*}
\begin{split}
X_{C_{n,v}} &=\sum_{k=3}^{n-2}(k-1)e_k X_{C_{n-k,v}} 
+2(n+1)(2n-3)e_{n+1}+2(n-1)(n-3)e_{n} e_1 \\
&\quad+ e_2\left[  X_{C_{n-2,v}} -2(n-3)e_{n-1}\right],
\end{split}
\end{equation*}
with initial conditions
$$
X_{C_{1,v}}=2 e_2,\qquad X_{C_{2,v}}=6 e_3,\qquad X_{C_{3,v}}=24 e_4, \quad \text{and} \quad X_{C_{4,v}}=50 e_5 +6 e_4 e_1 +4 e_3 e_2. $$   
\end{theorem}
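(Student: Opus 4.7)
My approach would be to derive the recurrence directly from the expression in Lemma \ref{lemma:twin-cycle-TD}, namely $X_{C_{n,v}} = 4X_{C_{n+1}} + 2e_1 X_{C_n} - 6X_{P_{n+1}} + 2e_2 X_{P_{n-1}}$, by peeling off one level of the recurrences for paths and cycles given in Proposition~\ref{prop:RECURSION-pathS-cycleS}. Concretely, I would compute
\[
X_{C_{n,v}} - \sum_{k=3}^{n-2}(k-1)e_k X_{C_{n-k,v}}
\]
by applying Lemma~\ref{lemma:twin-cycle-TD} to both sides (so each $X_{C_{n-k,v}}$ becomes $4X_{C_{n-k+1}} + 2e_1 X_{C_{n-k}} - 6X_{P_{n-k+1}} + 2e_2 X_{P_{n-k-1}}$) and then expanding $X_{C_{n+1}}, X_{C_n}, X_{P_{n+1}}, X_{P_{n-1}}$ on the left via Proposition~\ref{prop:RECURSION-pathS-cycleS}. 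The generic terms indexed by $3 \le j \le n-2$ in each of the four path/cycle expansions match exactly with the four corresponding terms inside the subtracted sum (once reindexed $j \leftrightarrow k$), and therefore cancel.

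What remains is the four ``leading'' constants together with the boundary indices $j=2$, $j=n-1$, and $j=n$ (the latter two appearing only for the path expansions). Collecting coefficients by $e$-partition shape should give precisely $2(n+1)(2n-3)e_{n+1}$ (from $4n(n+1)-6(n+1)$), $2(n-1)(n-3)e_n e_1$ (from $2n(n-1)-6(n-1)$), and $-2(n-3)e_{n-1}e_2$ (from $8(n-2)-12(n-2)+2(n-1)$) as the ``top-degree'' contributions. The leftover pieces—namely $4e_2 X_{C_{n-1}} + 2e_1 e_2 X_{C_{n-2}} - 6 e_2 X_{P_{n-1}} + 2e_2^2 X_{P_{n-3}}$—factor as $e_2\bigl[4X_{C_{n-1}} + 2e_1 X_{C_{n-2}} - 6X_{P_{n-1}} + 2e_2 X_{P_{n-3}}\bigr]$, which by Lemma~\ref{lemma:twin-cycle-TD} applied with $n$ replaced by $n-2$ equals $e_2 X_{C_{n-2,v}}$ (valid for $n-2\geq 3$, i.e.\ $n\geq 5$). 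Combining with the $-2(n-3)e_{n-1}e_2$ term yields the bracketed expression in the statement.

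For $e$-positivity, I would induct on $n$, having verified the listed initial cases directly. The only term that could fail positivity is the bracketed quantity $X_{C_{n-2,v}} - 2(n-3)e_{n-1}$. Using Lemma~\ref{lemma:twin-cycle-TD} again, together with the explicit coefficients $[e_{n-1}]X_{C_{n-1}} = (n-1)(n-2)$ and $[e_{n-1}]X_{P_{n-1}} = n-1$ recorded in Corollary~\ref{cor:coeffS-RECURSION-pathS-cycleS}, the coefficient of $e_{n-1}$ in $X_{C_{n-2,v}}$ is $4(n-1)(n-2) - 6(n-1) = 2(n-1)(2n-7)$, which exceeds $2(n-3)$ for every $n\geq 5$. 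Hence the bracket is $e$-positive, and induction closes the argument. The main obstacle here is the bookkeeping in the cancellation step: one has to be meticulous about which boundary indices of each of the four recurrence expansions escape cancellation, and verify that the surviving coefficients rearrange into precisely the backward Lemma~\ref{lemma:twin-cycle-TD} pattern for $n-2$.
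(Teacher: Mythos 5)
Your proposal is correct and follows essentially the same route as the paper: expand the four terms of Lemma~\ref{lemma:twin-cycle-TD} via Proposition~\ref{prop:RECURSION-pathS-cycleS}, recollect the generic summands into $\sum_k(k-1)e_kX_{C_{n-k,v}}$, peel off the $k=2$ term to form the bracket $e_2[X_{C_{n-2,v}}-2(n-3)e_{n-1}]$, and close the induction by checking that the coefficient $2(n-1)(2n-7)$ of $e_{n-1}$ in $X_{C_{n-2,v}}$ dominates $2(n-3)$. All of your boundary-term bookkeeping and coefficient computations match the paper's.
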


\begin{proof}
We proceed by induction on $n$. The initial cases $n \leq 4$ are verified by direct computation using~\Cref{thm:RPS-gf-paths-cycles} and~\Cref{lemma:twin-cycle-TD}. Note that these initial terms are all $e$-positive.

Assume we have shown the claim to be true for $X_{C_{m,v}}$ for all $m < n$. Now we rewrite the expression in~\Cref{lemma:twin-cycle-TD} using~\Cref{prop:RECURSION-pathS-cycleS} to obtain:
\begin{align}
X_{C_{n,v}}&=4(n+1) n e_{n+1} +4 \sum_{k=2}^{n-1} (k-1) e_k X_{C_{n+1-k}}\notag\\
&\quad+2e_1\left[n(n-1) e_n +\sum_{k=2}^{n-2} (k-1) e_k X_{C_{n-k}} \right]\notag\\
&\quad-6\left[(n+1) e_{n+1} +\sum_{k=2}^{n} (k-1) e_k X_{P_{n+1-k}} \right]\notag\\
&\quad+2e_2\left[ (n-1) e_{n-1} +\sum_{k=2}^{n-2} (k-1) e_k X_{P_{n-1-k}} \right].\label{eq:X_CnvFirstRecurrence}
\end{align}

Applying~\Cref{lemma:twin-cycle-TD} again,  we can collect the four summations above into one sum and three additional terms as follows:
\begin{align*}
&\sum_{k=2}^{n-2}(k-1)e_k\left[4X_{C_{n+1-k}} + 2e_1 X_{C_{n-k}} - 6 X_{P_{n+1-k}} + 2e_2X_{P_{n-1-k}}\right] \\
&\quad+  4(n-2)e_{n-1}X_{C_2} - 6(n-2) e_{n-1}X_{P_2} - 6(n-1)e_nX_{P_1} \\
&=\sum_{k=2}^{n-2} (k-1)e_kX_{C_{n-k,v}} + 4(n-2)e_{n-1}X_{C_2} - 6(n-2) e_{n-1}X_{P_2} - 6(n-1)e_nX_{P_1}.
\end{align*}

Combining this expression with the remaining terms  from~\eqref{eq:X_CnvFirstRecurrence}, we obtain
\begin{align}
X_{C_{n,v}} &=\sum_{k=2}^{n-2}(k-1)e_k X_{C_{n-k,v}} 
+2(n+1)(2n-3)e_{n+1}+2(n-1)(n-3)e_{n} e_1 \notag\\
&\quad-2(n-3)e_{n-1}e_2.
\label{eqn:REC-twin-cycleS}
\end{align}

We isolate the term $k=2$ from the summation and regroup it with the last term in~\eqref{eqn:REC-twin-cycleS}, so that it becomes
 \begin{align*}
 X_{C_{n,v}} &=\sum_{k=3}^{n-2}(k-1)e_k X_{C_{n-k,v}}  +2(n+1)(2n-3)e_{n+1}  +2(n-1)(n-3)e_{n} e_1\\
 &\quad  +e_2\left[X_{C_{n-2,v}}-2(n-3)e_{n-1}\right],
 \end{align*}
 as stated in the theorem. 

 Now we want to show $e$-positivity. By the induction hypothesis, only the last term requires scrutiny.  Using ~\Cref{cor:coeffS-RECURSION-pathS-cycleS} and~\Cref{lemma:twin-cycle-TD}, the coefficient of $e_{n-1}$ in $X_{C_{n-2,v}}$ is $2(n-1)(2n-7)$. 
Therefore, the coefficient of $e_{n-1}$ in 
$X_{C_{n-2,v}}-2(n-3)e_{n-1}$
is $2(n-1)(2n-7) - 2(n-3) = 4(n^2 -5n + 5)$, which is nonnegative for $n \geq 4$.  Thus, by the induction hypothesis, the formula in the statement for $X_{C_{n,v}}$ is indeed an $e$-positive recurrence for the twinned cycles for $n \geq 5$. 
\end{proof}

    \subsubsection{The moose graph}
 
We define the \textit{moose graph} $A_{n+2}$ to be the graph on $n+2$ vertices and $n+1$ edges, obtained from the cycle graph $C_n$ by attaching a leaf to each of the vertices $v$, $w$ of an edge $v w$ in $C_n$,

\begin{figure}[ht]
    \centering
    \begin{tikzpicture}[scale=.7]
\draw(1,1.7) -- (1,3.5);
\draw(1,1.7) -- (2,0);
\draw(2,0) -- (1,-1.7);
\draw(1,-1.7) -- (0.6,-2);
\draw(-1,-1.7) -- (-0.6,-2);
\draw(-1,-1.7) -- (-2,0);
\draw(-2,0) -- (-1,1.7);
\draw(-1,1.7) -- (1,1.7);
\draw(-1,1.7) -- (-1,3.5);
\node at (0.1,-2.1){$\cdots$};
\node(1') at (1,3.5){$\bullet$};
\node(1) at (1,1.7){$\bullet$};
\node(2) at (2,0){$\bullet$};
\node(3) at (1,-1.7){$\bullet$};
\node(4) at (-1,-1.7){$\bullet$};
\node(5) at (-2,0){$\bullet$};
\node(6) at (-1,1.7){$\bullet$};
\node(6') at (-1,3.5){$\bullet$};
\node[left, xshift = -2pt] at (6){$v$};
\node[right, xshift = 2pt] at (1){$w$};
\end{tikzpicture}
    \caption{The moose graph $A_{n+2}$.}
    \label{fig:Moose}
\end{figure}
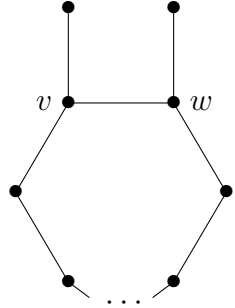

We provide an $e$-positive recurrence relation for the chromatic symmetric function of the moose graph. This graph was shown to be $e$-positive as a special case in \cite[Theorem 3.9]{WangZhou2024}.  We omit the proof. 

\begin{prop}\label{thm:MooseRecurrence}
For $n\geq 2$, the chromatic symmetric function of the moose graph $A_{n+2}$ is $e$-positive. For $n\ge 4$, it satisfies the $e$-positive recurrence 
\begin{align*}
X_{A_{n+2}}&=  \left(\sum_{j=2}^{n-2} (j-1)e_j X_{A_{n+2-j}}\right)\\
&\quad+(n+2)(n-1)e_{n+2} +2e_1 e_{n+1} (n^2-n-1) +(n-1)(n-2) e_1^2 e_n +{2 e_2e_n}, 
\end{align*}
with initial values
\begin{align}X_{A_4}&=X_{P_4}=2e_2^2+2e_3 e_1+ 4 e_4, \nonumber \\ X_{A_5}&=2e_3 e_1^2 
+ 2 e_3 e_2 + 10 e_4 e_1+ 10 e_5, \nonumber \\ X_{A_6}&=2 e_2^3 + 2 e_3 e_2 e_1 + 6  e_4 e_1^2 + 6 e_4 e_2 + 22 e_5 e_1  + 18 e_{6}.  \nonumber 
\end{align}
\end{prop}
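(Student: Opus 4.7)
The plan is to derive a closed-form expression for $X_{A_{n+2}}$ in terms of chromatic symmetric functions of paths, cycles, and tadpoles via iterated triple-deletion arguments, then re-express it in the stated recursive form using the path and cycle recurrences of \Cref{prop:RECURSION-pathS-cycleS}. The $e$-positivity will then follow by strong induction on $n$, once the initial values are checked and the leading coefficients are seen to be nonnegative for $n\geq 4$.

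First I would apply \Cref{cor:Triple-deletion-almost-triangles} to $A_{n+2}$ at the pendant edge $vv'$ and the cycle edge $vw$, yielding
\[ X_{A_{n+2}} = X_{B_{n+2}} + X_{P_{n+2}} - X_{G^*}, \]
where $B_{n+2}$ is the cycle $C_n$ with two non-adjacent pendants at one cycle vertex, and $G^*$ is the tree obtained from the path $P_n$ by attaching two non-adjacent pendants to a leaf. A second application of the same corollary to $G^*$ gives $X_{G^*} = X_{P_{n+2}} + e_1 X_{P_{n+1}} - 2e_2 X_{P_n}$, and a third application to $B_{n+2}$ at the pendant-bearing vertex gives $X_{B_{n+2}} = X_{\Tad_{n,2}} + e_1 X_{T_{n+1}} - 2e_2 X_{C_n}$, where $\Tad_{n,k}$ denotes the cycle $C_n$ with an attached pendant path on $k$ vertices and $T_{n+1}=\Tad_{n,1}$. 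Iterating the triple deletion across cycle edges of $\Tad_{n,k}$, terminating at the base case $X_{\Tad_{3,m}} = 2X_{P_{m+3}} - 2e_2 X_{P_{m+1}}$ (which follows from \Cref{prop:Triple-deletion} applied to the triangle), produces the closed formula
\[ X_{\Tad_{n,k}} = (n-1)X_{P_{n+k}} - 2e_2 X_{P_{n+k-2}} - \sum_{j=1}^{n-3} X_{C_{n-j}} X_{P_{k+j}}. \]

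Substituting these expressions back and simplifying, with the cancellation $\pm 2 e_2 X_{P_n}$, produces
\[ X_{A_{n+2}} = (n-1) X_{P_{n+2}} + (n-2)e_1 X_{P_{n+1}} - 2e_1 e_2 X_{P_{n-1}} - 2e_2 X_{C_n} - \sum_{j=1}^{n-3} X_{C_{n-j}} \bigl( X_{P_{j+2}} + e_1 X_{P_{j+1}} \bigr). \]
Applying the recurrences in \Cref{prop:RECURSION-pathS-cycleS} to each of $X_{P_{n+2}}$, $X_{P_{n+1}}$, $X_{C_n}$, and $X_{C_{n-j}}$ on the right-hand side, and then reassembling, the bracketed cross-terms at each index $j$ recombine to form $(j-1)e_j$ times the moose formula applied with index $n+2-j$, i.e., $(j-1)e_j X_{A_{n+2-j}}$. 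The residual terms simplify to the explicit polynomial $(n+2)(n-1)e_{n+2} + 2(n^2-n-1)e_1 e_{n+1} + (n-1)(n-2)e_1^2 e_n + 2e_2 e_n$ stated in the proposition. The initial values $X_{A_4}, X_{A_5}, X_{A_6}$ are verified by direct computation, after which strong induction gives the $e$-positivity, since all leading coefficients are nonnegative for $n\geq 4$ and the summation is $e$-positive by the induction hypothesis.

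The main obstacle is the algebraic bookkeeping in the last step: one must track how the combinations $X_{C_{n-j}} X_{P_{j+2}}$ and $e_1 X_{C_{n-j}} X_{P_{j+1}}$ arising from the path-cycle recurrence expansions reassemble into exactly the moose formula at index $n+2-j$. This reverse-substitution argument requires careful matching of coefficients across the summation indices, but is mechanical once the correct ansatz is identified and the isolated low-degree leading terms are separated from the recursive bulk.
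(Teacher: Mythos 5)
The paper omits its own proof of this proposition, so there is nothing to compare your argument against directly; I can only assess it on its own terms. Your reduction is sound and very much in the spirit of the paper's other arguments: each of the three applications of \Cref{cor:Triple-deletion-almost-triangles}, the telescoped tadpole formula $X_{\Tad_{n,k}} = (n-1)X_{P_{n+k}} - 2e_2 X_{P_{n+k-2}} - \sum_{j=1}^{n-3} X_{C_{n-j}} X_{P_{k+j}}$, and the resulting closed form for $X_{A_{n+2}}$ all check out; the closed form does reproduce the stated values of $X_{A_5}$ and $X_{A_6}$, and the recurrence itself is correct (I checked $n=4,5$ directly). The passage from the recurrence to $e$-positivity is also fine, since the residual coefficients are nonnegative for $n\ge 4$.

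The gap is the final step, which is where essentially all of the content lies: you assert, but do not show, that substituting the recurrences of \Cref{prop:RECURSION-pathS-cycleS} into the closed form reassembles into $\sum_{j=2}^{n-2}(j-1)e_j X_{A_{n+2-j}}$ plus exactly the stated residual. Your sketch of how this goes is moreover inaccurate in two respects. First, expanding $(n-1)X_{P_{n+2}}$ produces the coefficient $(n-1)(j-1)e_j$ on $X_{P_{n+2-j}}$, whereas $(j-1)e_j X_{A_{n+2-j}}$ requires $(n-j-1)(j-1)e_j$; the excess $j(j-1)e_jX_{P_{n+2-j}}$ must be cancelled against the leading terms $(n-i)(n-i-1)e_{n-i}$ arising from expanding $X_{C_{n-i}}$ inside the double sum, so the recombination is a cross-cancellation between different parts of the formula, not a term-by-term regrouping ``at each index $j$''. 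Second, your closed form fails in the degenerate case $n=2$: it would give $X_{P_4}-2e_1^2e_2-4e_2^2$ rather than $X_{A_4}=X_{P_4}$, so the boundary term $j=n-2$ of the reassembled sum is not ``the moose formula applied with index $4$'', and the compensating quantity $(n-3)e_{n-2}\bigl(2e_1^2e_2+4e_2^2\bigr)$ must be tracked and shown to cancel; it does not appear in the stated residual. None of this is fatal, since the identity is true, but as written the proof defers precisely the computation that constitutes the proof and describes it in a way that would mislead anyone trying to carry it out.
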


\section{Future directions}

This work provides explicit $e$-positive generating functions for the chromatic symmetric function of twinned paths and cycles, and suggests that it may be worthwhile to undertake a similar study for twins of other graph families.

More generally, an examination of \Cref{table:known_results} shows that much of the recent literature focuses on establishing Gebhard and Sagan's $(e)$-positivity of the chromatic symmetric function in \emph{noncommuting} variables. Although $(e)$-positivity implies $e$-positivity as a symmetric function in ordinary commuting variables, in such cases an explicit $e$-positive generating function or recurrence would be desirable. We propose the following future investigations in this direction: 

\begin{enumerate}
    \item For the twinned cycle graph, is the chromatic symmetric function in noncommuting variables  $(e)$-positive?
    \item Are there pleasing $e$-positive symmetric function expansions for those families whose $e$-positivity is known only via the stronger $(e)$-positivity property? Specific examples that may admit nice generating functions are the triangular ladder   \cite{dahlberg2019triangular,stanley_1999}, the kayak paddle graphs \cite{aliniaeifard2021chromatic} and the tadpole graph \cite{gebhard_sagan_2001, Li-Li-Wang-Yang}.
\end{enumerate}

It  would also be interesting to examine twinning for the \textit{chromatic quasisymmetric function} of Shareshian and Wachs~\cite{SW,shareshian_wachs_2016} since the main class of posets of study for these, whose incomparability graphs are unit interval graphs, 
is also closed under the appropriately defined twinning operation for labeled graphs. 

\subsection*{Acknowledgments}
This work began at the 2023 Graduate Research Workshop in Combinatorics, which was supported in part by NSF grant \#1953445, NSA grant \#H98230-23-1-0028, and the Combinatorics Foundation. We would like to thank the organizers for providing a great venue for collaboration.
We also gratefully acknowledge Spencer Daugherty's early contributions to this project.

E. Banaian was supported by Research Project 2 from the Independent Research Fund Denmark (grant no. 1026-00050B). J. Lentfer was supported by the National Science Foundation Graduate Research Fellowship DGE-2146752. 

\bibliographystyle{amsplain}

\bibliography{TwinningPaperArXiv2024Nov3}

\end{document}